\let\saverem=\rem
\let\rem\relax
\let\rem=\saverem
\let\cl@part\relax
\providecommand{\N}{\mathbb{N}}
\providecommand{\R}{\mathbb{R}}
\theoremstyle{definition}
\newtheorem{de}{Definition}[chapter]
\newtheorem{defr}{Définition}[chapter]
\theoremstyle{plain}
\newtheorem{thmfr}{Théorème}[chapter]
\newtheorem{ppty}{Property}[chapter]
\newtheorem{prop}{Proposition}[chapter]
\newtheorem{dem}{Démonstration}[chapter]
\newtheorem{coro}{Corollaire}[chapter]
\newtheorem{lem}{Lemma}[chapter]
\theoremstyle{definition}
\newtheorem{de}{Definition}
\theoremstyle{plain}
\newtheorem{prop}{Proposition}
\newtheorem{lem}{Lemma}
\theoremstyle{remark}
\newtheorem{remark}{Remark}
\DeclareMathOperator{\sgn}{sgn}
\DeclareMathOperator{\Sgn}{Sgn}
\DeclareMathOperator{\sat}{sat}
\DeclareMathOperator{\esssup}{ess\,sup}
\DeclareMathOperator{\proj}{proj}
\newcommand{\sate}{\sat_{\varepsilon}}
\def\lenSubFig{.49}
\newcommand*\beginpgfgraphicnamed{[}\input{2}\endpgfgraphicnamed]{\beginpgfgraphicnamed{#1}\input{#2}\endpgfgraphicnamed}
\providecommand*{\toclevel@algorithm}{0}
\abstract{%
Different time-discretization methods for equivalent-control based sliding mode control (ECB-SMC) are presented.
A new discrete-time sliding mode control scheme is proposed for linear time-invariant (LTI) systems.
It is error-free in the discretization of the equivalent part of the control input.
Results from simulations using the various discretized SMC schemes are shown, with and without perturbations.
They illustrate the different behaviours that can be observed.
Stability results for the proposed scheme are derived.}
\begin{document}
\RRNo{8383}
\makeRR

\section{Introduction}

The time discretization of sliding-mode controllers has witnessed an intense activity in the past 30 years
\cite{sarpturk1987stability, drakunov1989discrete, furuta1990sliding, utkin1994sliding, gao1995discrete, golo2000robust}
and \cite{milosavljevic1985general}.
This concerns in particular the classical Equivalent-Control-Based Sliding-Mode Control (ECB-SMC),
which consists of two sub-controllers: the state-continuous equivalent control $u^{eq}$ and the state-discontinuous control $u^{s}$.
In these past research efforts, most of the focus was on the discontinuous part of the control, since it introduces numerical chattering.
Several solutions to alleviate numerical chattering (that is solely due to the time discretization
\cite{galias2006complex,galias2008analysis,wang2009zoh,acary2010implicit,acary2012chattering})
have been proposed~\cite{sarpturk1987stability, drakunov1989discrete, furuta1990sliding, utkin1994sliding, gao1995discrete, golo2000robust,plestan2012advances,defoort2009novel,milosavljevic1985general},
 most of them consisting in the definition of a so-called quasi-sliding surface \cite{gao1995discrete}
 and an explicit discretization of $u^{s}$. The works in \cite{drakunov1989discrete} and \cite{golo2000robust}
depart from these discrete-time controllers and propose an algorithm which allows the sliding variable to take exactly
the zero value at sampling times.
They are however limited to first order, scalar systems and require some stringent assumptions.
Recently a new approach, which may be seen as a (non-trivial) extension of the controllers in
\cite{drakunov1989discrete} and \cite{golo2000robust}, has been proposed
in \cite{acary2010implicit} and \cite{acary2012chattering}.
The basic idea is to implement the discontinuous input $u^{s}$ in an implicit form,
while keeping its causality (i.e. the controller is nonanticipative).
Then this input has to be computed at each sampling time as the solution to a
generalized, set-valued equation, which takes the form of a simple projection on an interval in the simplest cases.
Let us illustrate the difference between explicit and implicit discretization with an academic example,
$\dot{x}(t)\in-\alpha\sgn(x(t))$, $\alpha>0$. We use the differential inclusion framework since
we let $\sgn(0)$ to take any value in $[-1,1]$ (this is formally stated in Definition~\ref{de:Sgn} as $\Sgn$).
An explicit discretization yields $x(t_{k+1}) \in x(t_k) - h\alpha\sgn(x(t_k))$,
whereas the implicit one yields $x(t_{k+1}) \in x(t_k) - h\alpha\sgn(x(t_{k+1}))$. As long as $|x(t_k)|\gg h\alpha$, there is
no difference between the two discretizations. But if $|x(t_k)| < \alpha h$, then the behaviour changes with the type
of discretization. With $0<x(t_k)<h\alpha$, in the explicit case, $x(t_{k+1}) \in x(t_k) - h\alpha\sgn(x(t_k)) < 0$.
The sign of the state will change at every $t_k$, leading to the well-known chattering phenomenon.
Whereas in the implicit case, it is possible to ensure $x(t_{k+1})=0$
by choosing $\sgn(x(t_{k+1})) = x(t_k)/(\alpha h)<1$.
The implicit discretization of the sign function is rigorously presented in Section~\ref{sec:dt}.

To the best of our knowledge, the discretization of the equivalent part has received little attention.
In this work, we present a study of the effects of discretization on both the equivalent and discontinuous
part of the control. After studying the different discretization methods and their
shortcomings, we propose a new discrete-time control scheme, where the equivalent part
is not a discretized version of its continuous-time counterpart
but is directly designed from the discrete-time dynamics.
Some properties of this scheme, like finite-time convergence to the sliding surface and perturbation attenuation,
are studied in Section~\ref{sec:stab_analysis}.

In this paper, we consider systems of the form
\begin{equation}
 \begin{cases}
\dot{x}(t) = Ax(t) + Bu(t) + B\xi(t)\\
u(t) = u^{eq}(t) + u^s(t)\\
\sigma(t) \coloneqq Cx(t)\\
u^s(t) \in -\alpha\Sgn\left(\sigma(x(t))\right),
\end{cases}
 \label{linSyst}
\end{equation}
with $x(t)\in\R^n, u(t)\in\R^p, \sigma(t)\in\R^p$, $C\in\R^{p\times n}$, and $\alpha>0$.
The function $\sigma$ is called the \emph{sliding variable}, the disturbance is denoted as $\xi$,
and $\Sgn$ is formally introduced in Definition~\ref{de:Sgn}.
The perturbation $\xi$ is supposed to be at least continuous: noise is not considered in this paper.
When $\xi=0$, the system is said to be \emph{nominal}.
The method used to discretize the dynamics is called Zero-Order Hold (ZOH),
also known as exact sampled-data representation.
It is often considered for technological reasons, but also because there is no error with this discretization
method.

In the remainder of this section, we introduce the notation. In Section~\ref{sec:recall} we briefly recall
the ECB-SMC theory. Then some classical discretization methods are presented in Section~\ref{sec:dt}.
Section~\ref{sec:perf} is dedicated to the discrete error analysis of various controllers of Section~\ref{sec:dt}.
We introduce our new discrete-time SMC scheme in Section~\ref{sec:design}.
Simulation results using different time-discretization methods are shown in Section~\ref{sec:sim},
to illustrate the possible different behaviours of the closed-loop system.
Finally, stability results are derived in Section~\ref{sec:stab_analysis}.
Conclusions end the paper in Section~\ref{sec:conclusions}.

\textbf{Notations}:
Let $\mathbf{x}\colon\R_+\times\R^p\times\R^n\to\R^n$ be the solution of system~\eqref{linSyst}, 
$x \coloneqq \mathbf{x}(\cdot, u, x_0)$ is the solution associated with a continuous-time control $u$ and an initial
state $x_0\in\R^n$, while $\bar{x} \coloneqq \mathbf{x}(\cdot, \bar{u}, x_0)$ is the solution with a step function $\bar{u}$
and the same initial state.
In the latter case, we denote by $\bar{\sigma} \coloneqq C\bar{x}$ the sliding variable.
The control values change at predefined time instants $t_k$,
defined for all $k\in\N:\; t_k \coloneqq t_0 + kh, \;t_0, h\in\R_+$. The scalar $h$ is called the timestep.
We denote $\bar{x}_k \coloneqq \bar{x}(t_k)$ and $\bar{\sigma}_k \coloneqq \bar{\sigma}(t_k)$ for all $k\in\N$.
For all $y\in\R^r$, $\|y\|_\infty = \max_i |y_i|$. For all $M\in\R^{r\times s}$, $\|M\|_\infty=\max_i \sum_j|M_{ij}|$.
Let $w\colon\R\to\R^r$ and $S$ be any interval in $\R$, $\|w\|_{\infty,S} = \max_i\esssup_{t\in S} |w_i(t)|$.
Let $\langle\cdot,\cdot\rangle$ denote the standard inner product in a Euclidean space and $\|\cdot\|$ the norm based upon it.
Let $\sgn$ be the classical single-valued sign function: for all $x>0,\; \sgn(x)=1, \; \sgn(-x) = -1$ and $\sgn(0) = 0$.
\begin{de}[Multivalued sign function]\label{de:Sgn}
Let $x\in\R$. The multivalued sign function $\Sgn\colon\R\rightrightarrows\R$ is defined as:
\begin{equation}
 \Sgn(x) = \begin{cases}
  1 & x>0\\
  -1& x<0\\
  [-1,1]&x=0.
 \end{cases}\hfill\label{eq:def_Sgn}
\end{equation}
If $x\in\R^n$, then the multivalued sign function $\Sgn\colon\R^n\rightrightarrows\R^n$ is defined as:
for all $j = 1,\dots,n, (\Sgn(x))_j \coloneqq \Sgn(x_j)$.
\end{de}
\begin{de}
 Let $f\colon \R^n\times\R\to\R^p$ and $l\in\R$. One has $f = \mathcal{O}(h^l)$ if for all $x\in\R^n$, there
 exists $c\in\R^p$ such that $f(x, h)/h^l\to c$ as $h\to0$.
\end{de}
\begin{de}{\cite[p.~147]{cottle2009linear}}\label{de:Pmatrix}
 Let $M\in\R^{n\times n}$. $M$ is a $\mathbf{P}$-matrix if for all $x\in \R^n$ such that for all $i\in\{1,\dots,n\}$,
 $x_i(Mx)_i \leq 0$, then $x = 0$.
\end{de}
\begin{lem}{\cite[p.~147]{cottle2009linear}}
 Let $M\in\R^{n\times n}$. If $M$ is positive-definite, then $M$ is a $\mathbf{P}$-matrix.
\end{lem}

\section{The equivalent-based continuous-time sliding-mode controller}\label{sec:recall}

Let us assume that the triplet $(A,B,C)$ has a strict vector relative degree $(1,1,\dots,1)$.
This implies that the decoupling matrix $CB$ is full rank.
The dynamics of the sliding variable in the nominal system~\eqref{linSyst} (that is with $\xi(t) = 0$) is
\begin{equation}
 \dot{\sigma}(t) = CAx(t) + CBu^{eq}(t) + CBu^s(t).
\end{equation}
The control law $u^{eq}$ is designed such that the system stays on the sliding surface once it has been reached
(in other word $u^{eq}$ renders the sliding surface invariant with $u^s\equiv0$):
\begin{equation}
 \dot{\sigma}(t) = 0 \text{ and } u^s(t) = 0 \quad \Rightarrow \quad u^{eq}(t) = -(CB)^{-1}CAx(t).
 \label{defUeq}
\end{equation}
Then the sliding variable dynamics with the equivalent control reduces to
\begin{equation}
 \left\{
 \begin{aligned}
 \dot{\sigma}(t) &= CBu^s(t)\\
 u^s(t) &\in -\alpha\Sgn(\sigma(t)).
\end{aligned}
\right.
 \label{slidingVarDyn}
\end{equation}
The nominal system (\ref{linSyst}) can be rewritten as
\begin{align}
 \dot{x}(t) &= (I-B(CB)^{-1}C)Ax(t) + Bu^s(t),\\
 \intertext{or equivalently}
 \dot{x}(t) &= \Pi Ax(t) + Bu^s(t),
 \label{ProjDyn}
\end{align}
with $\Pi \coloneqq I-B(CB)^{-1}C$.
Two interesting properties of $\Pi$ are $C\Pi = 0$ and $\Pi$ is a projector \cite{edwards1998sliding}.
Taking the integral form of system (\ref{ProjDyn}) yields the relation
\begin{align}
x(t) &= \Phi(t, t_0)x(t_0) + \int_{t_0}^t\!\Phi(t,\tau)Bu^s(\tau)\mathrm{d}\tau,\label{projectedDyn}
\end{align}
with $\Phi(t, t_0) = e^{\Pi A(t-t_0)}$ the state transition matrix for the system~\eqref{ProjDyn}.
Some of the properties of $\Phi$ are given in the following lemma.
\begin{lem}\label{lem:st_mat}
 One has $\dot{\Phi}(t,t_0) = \Pi A\Phi(t,t_0)$, $\Phi(t_0, t_0) = I$,
 and $C\Phi = C$ for all $t\geq t_0$.
\end{lem}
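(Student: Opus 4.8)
The plan is to treat the three assertions in turn, all of them flowing from the explicit formula $\Phi(t,t_0) = e^{\Pi A(t-t_0)}$ and the already-recorded identity $C\Pi = 0$.

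First I would dispatch the two elementary claims. Since $\Phi(t,t_0)$ is by definition the matrix exponential $e^{\Pi A(t-t_0)}$, evaluating at $t=t_0$ gives $e^{0} = I$, which is the initial condition. Differentiating the matrix exponential with respect to $t$ (using that $\Pi A$ is a constant matrix, so it commutes with $e^{\Pi A(t-t_0)}$) yields $\dot{\Phi}(t,t_0) = \Pi A\, e^{\Pi A(t-t_0)} = \Pi A\,\Phi(t,t_0)$, which is the stated differential equation. These are standard properties of the state transition matrix of a linear time-invariant system and require no work beyond invoking the definition.

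For the third claim, $C\Phi = C$, I would use a short uniqueness-of-solution argument. Define $g(t) \coloneqq C\Phi(t,t_0) - C$. Then $g(t_0) = C - C = 0$, and using the differential equation just established together with $C\Pi = 0$, one gets $\dot{g}(t) = C\dot{\Phi}(t,t_0) = C\Pi A\,\Phi(t,t_0) = 0$ for all $t\geq t_0$. Hence $g$ is constant and equal to its value $0$ at $t_0$, so $C\Phi(t,t_0) = C$ for all $t\geq t_0$. (An equivalent route is to expand $e^{\Pi A(t-t_0)}$ as a power series and observe that $C(\Pi A)^k = (C\Pi)A(\Pi A)^{k-1} = 0$ for every $k\geq 1$, so only the identity term of the series survives after left-multiplication by $C$.)

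There is no real obstacle here; the only thing to be careful about is that the derivation of $C\Phi = C$ genuinely uses $C\Pi = 0$ rather than the weaker-looking fact that $\Pi$ is a projector, and that one should justify differentiating under the series (or, in the ODE version, simply invoke linearity and the constant-coefficient structure). The argument is otherwise immediate.
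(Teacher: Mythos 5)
Your proposal is correct and follows essentially the same route as the paper: the paper simply notes $C\dot{\Phi}(t,t_0)=0$ (which is your $\dot g=C\Pi A\Phi=0$ step, using $C\Pi=0$) and concludes $C\Phi(t,t_0)=C\Phi(t_0,t_0)=C$, while treating the exponential identities $\dot\Phi=\Pi A\Phi$ and $\Phi(t_0,t_0)=I$ as standard, just as you do. Your added power-series remark is a valid alternative but not needed.
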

\begin{proof}
 One has $C\dot{\Phi}(t,t_0) = 0$ so $C\Phi(t,t_0) = C\Phi(t_0,t_0) = C$ for all $t\geq t_0$.
\end{proof}

\section{Discrete-time controllers}\label{sec:dt}

\subsection{Classical discretization methods to obtain discrete-time controllers}\label{sec:disc_methods}

From now on, $\bar{u}^{eq}$ and $\bar{u}^s$ are sampled control laws defined as right-continuous step functions:
\begin{align}
 \bar{u}^{eq}(t) &= \bar{u}^{eq}_k,\quad t\in[t_k,t_{k+1})\\
 \bar{u}^{s}(t) &= \bar{u}^{s}_k,\quad t\in[t_k,t_{k+1}).\label{dtU}
\end{align}
The goal of the discretization process is to choose the elements of the sequences
$\{\bar{u}^{eq}_k\}$ and $\{\bar{u}^{s}_k\}$ such that the discrete-time system
exhibits properties as close as possible to the ones with a continuous-time controller.
In continuous time, sliding-mode controlled systems have their evolution divided into two phases: the \emph{reaching phase}, where
$\|\sigma\|>0$ and is decreasing, and the \emph{sliding phase}, where $\sigma=0$ and the sliding motion occurs. It is well known that
the sliding motion does not occur in general in discrete time even on a nominal system because of the error
induced by the discretization. This has led to the definition of quasi-sliding surfaces~\cite{gao1995discrete}.
By analogy with the Filippov's solutions we define the following. 
\begin{de}[Discrete-time sliding phase]\label{de:dtSlidingPhase}
 A system~\eqref{linSyst}, in its sampled-data form, is in the discrete-time sliding phase if $\bar{u}^s$
 takes values in $(-\alpha,\alpha)^p$.
\end{de}
Such a definition appears to be new in the discrete-time sliding mode control field since it implies that the
discrete-time discontinuous controller is itself set-valued, just as its continuous-time counterpart
in \eqref{linSyst} and \eqref{slidingVarDyn}. This will be made possible with an implicit implementation, as proved in
\cite{acary2010implicit} and \cite{acary2012chattering}.
It is crucial not to define the sliding phase in terms of $\bar{\sigma}_{k}$,
but rather in terms of the discontinuous input $\bar{u}^s$. In continuous-time, Definition~\ref{de:dtSlidingPhase}
applied to $u^s$ implies that the system is in the sliding phase $\sigma\equiv0$, see~\eqref{slidingVarDyn}.
It holds with a matched perturbation as long as for all $t\geq0$, $\|\xi(t)\|_{\infty}\leq\delta<\alpha$
for some $\delta\geq0$. 

Integrating the nominal version of \eqref{linSyst} over $[t_k, t_{k+1})$
and using the expressions in \eqref{dtU}, we obtain the ZOH discretization of the system:
\begin{align}
 \bar{x}_{k+1} &= e^{Ah}\bar{x}_k + B^{*}\bar{u}^{eq}_k + B^{*}\bar{u}^s_k,\label{eq:ZOH}
 \intertext{with}
 B^{*} &\coloneqq \int_{t_k}^{t_{k+1}}\!e^{A(t_{k+1}-\tau)}B\mathrm{d}\tau.\label{eq:Bstar}
\end{align}
Let $\Psi\coloneqq \int_{t_k}^{t_{k+1}}\!e^{A(t_{k+1}-\tau)}\mathrm{d}\tau=\sum_{l=0}^{\infty}\frac{A^lh^{l+1}}{(l+1)!}$,
then $B^{*} = \Psi B$.
We now present different choices for the values $\bar{u}^{eq}_k$ and $\bar{u}^s_k$. Firstly, standard methods
are described, while the new method is studied in the next section.
Here $\bar{u}^{eq}_k$ and $\bar{u}^s_k$ are the discretized values of the continuous-time control law $u^{eq}$ and $u^{s}$.
From all the possible time-discretization schemes, we focus on the one-step explicit, implicit, and midpoint ones.
With the expressions found for $u^{eq}$ and $u^{s}$ in
\eqref{defUeq} and \eqref{slidingVarDyn}, the proposed discretized values for the equivalent control $\bar{u}^{eq}_k$ are:
\begin{subequations}
 \label{eqs:Ueq}
 \begin{align}
 \bar{u}^{eq}_{k,e} &= -(CB)^{-1}CA\bar{x}_k &&\text{explicit input,}\label{UeqZOHe}\\
 \bar{u}^{eq}_{k,i} &= -(CB)^{-1}CA\bar{x}_{k+1} &&\text{implicit input,}\label{UeqZOHi}\\
 \bar{u}^{eq}_{k,m} &= 1/2(\bar{u}^{eq}_{k,e}+\bar{u}^{eq}_{k,i}) &&\text{midpoint input,}\label{UeqZOHm}
\end{align}
\end{subequations}
and the two possibilities for the discontinuous control $\bar{u}^s_k$ are:
\begin{subequations}
 \label{eqs:Us}
 \begin{align}
  \bar{u}^s_k &=-\alpha\sgn(\bar{\sigma}_k)  &&\text{explicit input,}\label{eq:Use}\\
  \bar{u}^s_k &\in-\alpha\Sgn(\bar{\sigma}_{k+1})  &&\text{implicit input.}\label{eq:Usi}
 \end{align}
\end{subequations}
We use the singled-valued $\sgn$ function in \eqref{eq:Use} since the case
$\bar{\sigma}_k=0$ is not worth considering for explicit inputs. Moreover with the set-valued $\Sgn$
function, if $\bar{\sigma}_k = 0$,
then we would have $\Sgn(\bar{\sigma}_k)\in[-\alpha,\alpha]^p$ and there is no proper selection procedure to get
a value for $\bar{u}^s_k$. In the next subsection, we present the selection procedure in the implicit case.
The objective in Section~\ref{sec:sim} is to study the behaviour of the closed-loop system when different
combinations of \cref{UeqZOHe,UeqZOHi,UeqZOHm,eq:Use,eq:Usi} are used.
The most commonly used control law is the combination of \eqref{UeqZOHe} and \eqref{eq:Use}.
This kind of discretization has been studied in \cite{galias2006complex,galias2008analysis,wang2008zoh},
with a focus on the sequence formed by $\bar{\sigma}_k$ once the system state approaches the sliding manifold.
The implicit discretization~\eqref{eq:Usi} was first introduced in \cite{acary2010implicit}
and \cite{acary2012chattering}. In the following, we provide more details on it.

\subsection{Definition and properties of the implicitly discretized discontinuous control input}

With the implicit method~\eqref{eq:Usi}, for each $k\in\N$, $\bar{u}^s_k$ is computed as the solution to the
generalized equation
\begin{equation}
 \begin{cases}
  \widetilde{\sigma}_{k+1} = \bar{\sigma}_k + CB^{*}\bar{u}^s_k\\
  \bar{u}^s_k\in-\alpha\Sgn(\widetilde{\sigma}_{k+1}).
 \end{cases}
 \label{lcp:inexact}
\end{equation}
As shown in \cite{acary2010implicit}, there exists $k_0\in\N$ such that for all $k>k_0$,
$\widetilde{\sigma}_k=0$.
Let us write the discrete-time system with an implicit discretization of $u^s$ and let $\bar{u}^{eq}_k$
be computed using one of the methods in \cref{UeqZOHe,UeqZOHi,UeqZOHm}:
\begin{equation}
 \begin{cases}
  \bar{x}_{k+1} = e^{Ah}\bar{x}_k + B^{*}\bar{u}^{eq}_k + B^{*}\bar{u}^s_k\\
  \widetilde{\sigma}_{k+1} = C\bar{x}_k + CB^{*}\bar{u}^s_k\\
  \bar{u}^s_k\in-\alpha\Sgn(\widetilde{\sigma}_{k+1}).
 \end{cases}
 \label{eq:genDTsys}
\end{equation}
Nothing guarantees that $C(e^{Ah}\bar{x}_k + B^{*}\bar{u}^{eq}_k) = C\bar{x}_k$. Hence,
$\widetilde{\sigma}_{k+1}$ is in general different from $\bar{\sigma}_{k+1}$ because of the discretization
error on $u^{eq}$. Indeed this error, stemming from the discretization of $u^{eq}$, can be seen as a
perturbation of the closed-loop system, even if $\xi \equiv 0$.
Therefore, $\widetilde{\sigma}_{k+1}$ can be considered as an approximation of $\bar{\sigma}_{k+1}$.

The system~\eqref{lcp:inexact} can be analysed using the Affine Variational Inequality (AVI)
formalism~\cite{facchinei2003finite}.
Let $N_{[-\alpha,\alpha]^p}(\lambda)$ be the normal cone to the box $[-\alpha,\alpha]^p$ at $\lambda$,
that is $N_{[-\alpha,\alpha]^p}(\lambda) = \{d\in\R^p\mid \langle d, y-\lambda\rangle \leq0, \forall y\in[-\alpha, \alpha]^p\}$.
The relation $\bar{u}_k^s\in-\Sgn(\widetilde{\sigma}_{k+1}) \Longleftrightarrow \widetilde{\sigma}_{k+1}\in-N_{[-\alpha,\alpha]^p}(\bar{u}_k^s)$
enables us to 
transform~\eqref{lcp:inexact} into the inclusion:
\begin{equation}
 0 \in \bar{\sigma}_k + CB^{*}\bar{u}_k^s + N_{[-\alpha,\alpha]^p}(\bar{u}_k^s).
 \label{eq:incl_avi}
\end{equation}
The inclusion~\eqref{eq:incl_avi} is satisfied if and only if $\bar{u}_k^s$ is the solution
of the AVI:
\begin{equation}
 \text{Find }z\in[-\alpha,\alpha]^p \text{ such that}\quad
 (y-z)^T(\bar{\sigma}_k + CB^{*}z)\geq0, \quad \forall y\in[-\alpha,\alpha]^p.
 \label{eq:avi}
\end{equation}
Let $\mathrm{SOL}(CB^*, C\bar{x}_k)$ denote the set of all solutions to the AVI~\eqref{eq:avi}.
The existence and uniqueness of solutions to this AVI are now presented.
\begin{lem}\label{lem:existence}
 The AVI~\eqref{eq:avi} has always a solution.
\end{lem}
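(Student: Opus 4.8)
The plan is to recognize the AVI~\eqref{eq:avi} as a finite-dimensional variational inequality over a nonempty, compact, convex set with a continuous operator, and then invoke the classical Hartman--Stampacchia existence theorem (see, e.g., \cite{facchinei2003finite}). Concretely, I would set $K \coloneqq [-\alpha,\alpha]^p$ and $F(z) \coloneqq \bar{\sigma}_k + CB^{*}z$. Since $\alpha>0$, the box $K$ is a nonempty, closed, bounded and convex subset of $\R^p$; the map $F$ is affine, hence continuous on $\R^p$. These are exactly the hypotheses under which $\mathrm{SOL}(CB^{*}, C\bar{x}_k)$ is guaranteed to be nonempty, and one could stop here by citation.

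For completeness I would include the short fixed-point argument underlying that theorem. Define $\Phi\colon K \to K$ by $\Phi(z) \coloneqq \proj_K\bigl(z - F(z)\bigr)$. This is well defined because $\proj_K$ takes values in $K$, and continuous because $\proj_K$ is non-expansive (hence continuous) and $F$ is continuous. As $K$ is a nonempty compact convex subset of $\R^p$, Brouwer's fixed-point theorem yields some $z^{*}\in K$ with $\Phi(z^{*}) = z^{*}$.

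It then remains to check that a fixed point of $\Phi$ solves the AVI, which is the standard projection characterization of variational inequalities: $z^{*} = \proj_K\bigl(z^{*} - F(z^{*})\bigr)$ holds if and only if $\langle (z^{*} - F(z^{*})) - z^{*},\, y - z^{*}\rangle \leq 0$ for all $y\in K$, i.e. $\langle F(z^{*}),\, y - z^{*}\rangle \geq 0$, which is precisely $(y-z^{*})^{T}(\bar{\sigma}_k + CB^{*}z^{*}) \geq 0$ for all $y\in K$. Hence $z^{*}\in\mathrm{SOL}(CB^{*}, C\bar{x}_k)$ and the AVI has a solution.

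I do not anticipate a genuine obstacle here: compactness of the box, continuity of an affine map, and the Brouwer/projection argument are all elementary. The only step needing a little care is the projection characterization of VI solutions, which rests on the obtuse-angle criterion for Euclidean projection onto a closed convex set; this is classical and can simply be cited from \cite{facchinei2003finite}. Note that no uniqueness is asserted in this lemma — that will require an extra structural hypothesis on $CB^{*}$ (typically that it be a $\mathbf{P}$-matrix), which is handled separately.
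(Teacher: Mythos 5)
Your proof is correct and follows essentially the same route as the paper, which simply notes that $z\mapsto \bar{\sigma}_k + CB^{*}z$ is continuous on the compact convex box $[-\alpha,\alpha]^p$ and cites the existence result (Corollary~2.2.5) in \cite{facchinei2003finite}. Your added Brouwer/projection argument is just the standard proof of that cited theorem, so no discrepancy arises.
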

\begin{proof}
 Since the mapping $z\mapsto CB^{*}z+\bar{\sigma}_k$ is continuous, we can apply the Corollary~2.2.5, p.~148
 in \cite{facchinei2003finite}.
\end{proof}
\begin{lem}\label{lem:uniqueness}
 The AVI~\eqref{eq:avi} has a unique solution for all $\bar{\sigma}_k\in\R^n$ if and only if $CB^{*}$ is a $\mathbf{P}$-matrix.
 \end{lem}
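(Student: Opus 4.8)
The plan is to work with the componentwise description of the AVI~\eqref{eq:avi} over the box. Writing $M\coloneqq CB^{*}$ and $q\coloneqq\bar{\sigma}_k$, I would first record that $z\in[-\alpha,\alpha]^p$ solves~\eqref{eq:avi} if and only if for every index $i$,
\begin{equation}
 z_i=\alpha\ \Rightarrow\ (Mz+q)_i\leq0,\quad
 z_i=-\alpha\ \Rightarrow\ (Mz+q)_i\geq0,\quad\text{and}\quad
 -\alpha<z_i<\alpha\ \Rightarrow\ (Mz+q)_i=0,
 \label{eq:box_cw}
\end{equation}
which follows by testing~\eqref{eq:avi} with $y$ a perturbation of $z$ in the $i$-th coordinate only. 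Since the box is compact, Lemma~\ref{lem:existence} already supplies a solution for every $q$, so it suffices throughout to discuss uniqueness, i.e.\ whether~\eqref{eq:avi} can have two distinct solutions.

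For the \emph{if} direction, assume $M$ is a $\mathbf{P}$-matrix, take two solutions $z^1,z^2$ and set $w\coloneqq z^1-z^2$. I would show $w_i(Mw)_i\leq0$ for each $i$ by a sign case analysis: if $w_i>0$ then $z^1_i>-\alpha$ and $z^2_i<\alpha$, so~\eqref{eq:box_cw} forces $(Mz^1+q)_i\leq0$ and $(Mz^2+q)_i\geq0$, hence $(Mw)_i=(Mz^1+q)_i-(Mz^2+q)_i\leq0$; the case $w_i<0$ is symmetric and $w_i=0$ is trivial. Definition~\ref{de:Pmatrix} then yields $w=0$, i.e.\ uniqueness.

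For the \emph{only if} direction I would argue by contraposition. If $M$ is not a $\mathbf{P}$-matrix, pick $w\neq0$ with $w_i(Mw)_i\leq0$ for all $i$, and, after rescaling $w$ by a positive constant (which preserves that property), arrange $\|w\|_\infty<2\alpha$. I then build two points with $z^1-z^2=w$: on $\{i:w_i>0\}$ put $(z^1_i,z^2_i)=(\alpha,\alpha-w_i)$, on $\{i:w_i<0\}$ put $(-\alpha,-\alpha-w_i)$, and on $\{i:w_i=0\}$ set $z^1_i=z^2_i$ equal to $-\alpha$, $\alpha$ or $0$ according as $(Mw)_i$ is positive, negative or zero. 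The bound $\|w\|_\infty<2\alpha$ keeps both points in $[-\alpha,\alpha]^p$, and with $q\coloneqq-Mz^2$ (which is attained as $C\bar{x}_k$ for a suitable $\bar{x}_k$, since $CB$ full rank makes $C$ onto $\R^p$) one has $Mz^2+q=0$ and $Mz^1+q=Mw$. Checking~\eqref{eq:box_cw} for each: $z^2$ passes because an equality satisfies every case, and $z^1$ passes because $(Mw)_i$ has the correct sign at every index by construction. Hence~\eqref{eq:avi} has the two solutions $z^1,z^2$ (distinct since $z^1-z^2=w\neq0$) for that $q$, so uniqueness fails.

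I expect the sufficiency direction to be the crux: merely adding the two variational inequalities gives only $w^{T}Mw\leq0$, which is useless because a $\mathbf{P}$-matrix need not be positive (semi)definite, so the proof must descend to individual coordinates and extract the sign of each $(Mw)_i$ from~\eqref{eq:box_cw} — that coordinatewise reading is precisely what matches the defining condition of a $\mathbf{P}$-matrix. On the converse side the only care needed is feasibility of the two constructed solutions, handled by the rescaling, and the correct placement on the zero coordinates of $w$ dictated by the sign of $(Mw)_i$.
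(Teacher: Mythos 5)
Your proof is correct, but it takes a genuinely different route from the paper: the paper's entire proof is a one-line appeal to \cite{facchinei2003finite} (Theorem~4.3.2 together with Example~4.2.9), i.e.\ to the general result that a variational inequality over a rectangle has a unique solution for every affine term exactly when the underlying affine map is a $\mathbf{P}$-function, which for $z\mapsto CB^{*}z+\bar{\sigma}_k$ means $CB^{*}$ is a $\mathbf{P}$-matrix. You instead reprove this special case from scratch: the componentwise complementarity description of the box-constrained AVI is the correct KKT-type characterization (and your sign conventions check out), the sufficiency argument correctly extracts the sign of each $(CB^{*}w)_i$ coordinatewise so that Definition~\ref{de:Pmatrix} applies directly --- you are right that merely adding the two inequalities only gives $w^{T}CB^{*}w\le 0$, which is not enough for a general $\mathbf{P}$-matrix --- and the necessity part is a valid explicit construction of a right-hand side admitting two distinct solutions, with the rescaling $\|w\|_\infty<2\alpha$ and the placement on the zero coordinates of $w$ handled correctly; existence is indeed already covered by Lemma~\ref{lem:existence}, so discussing only uniqueness is legitimate. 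What each approach buys: the paper's citation is short and rests on general VI theory (which also packages existence and Lipschitz dependence used later in the text), whereas your argument is elementary and self-contained, makes transparent exactly where the $\mathbf{P}$-property is used, and in the converse direction even exhibits a concrete $\bar{\sigma}_k$ (attainable as $C\bar{x}_k$ by surjectivity of $C$) at which uniqueness fails --- essentially you have reproved the relevant special case of the cited theorem. One cosmetic remark: the quantifier in the lemma should read $\bar{\sigma}_k\in\R^p$ rather than $\R^n$, a slip in the paper that your surjectivity remark implicitly corrects.
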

\begin{proof}
 In \cite{facchinei2003finite}, using Theorem~4.3.2 p.~372 and Example~4.2.9 p.~361 yields the result.
\end{proof}
In most ECB-SMC systems, $CB^{*}>0$, therefore is a $\mathbf{P}$-matrix. This approach enables us to analyse
a large class of systems, compared to previous approaches where it is supposed that $CB^{*}$ is scalar
\cite{golo2000robust}. The solution is a function of $\bar{\sigma}_k$ (hence $\bar{x}_k$) and if $CB^{*}$
is a $\mathbf{P}$-matrix, the solution map $C\bar{x}_k\mapsto \bar{u}_k^s = \mathrm{SOL}(CB^*, \bar{\sigma}_k)$
is Lipschitz continuous. 
When the control is scalar or if $CB^{*}$ is diagonal, a solution to \eqref{lcp:inexact}
can be computed as a simple orthogonal projection:
$\bar{u}^s_k = -\proj_{[-\alpha, \alpha]^p}((CB^{*})^{-1}\bar{\sigma}_k)$.
Otherwise a quadratic problem with bounded constraints may be considered \cite{acary2012chattering}. More details on the
numerical aspects and solvers for this kind of problems can be found in \cite{cottle2009linear} and \cite{acary2008numerical}.
Now that we have discussed the existence, uniqueness, some properties of solutions and methods to compute them,
we turn our attention to the performance of each controller.
\section{Discretization performance}\label{sec:perf}

\subsection{Discretization of the state-continuous control}

Let us focus on the discretization error on $u^{eq}$ and more specifically on its effect on
the sliding variable. In other words we analyse how the invariance property in \eqref{defUeq} is preserved
after discretization. In the following, $\bar{u}^s$ is set to $0$.
Let $\Delta\bar{\sigma}_{k} \coloneqq \bar{\sigma}_{k+1}-\bar{\sigma}_k$ be the local variation of the
sliding variable due to the discretization error on $u^{eq}$.

\subsubsection{Explicit discretization}

With an explicit discretization of $u^{eq}$ as in \eqref{UeqZOHe}, and using \eqref{eq:ZOH} the closed-loop discrete-time system dynamics is
\begin{equation}
 \bar{x}_{k+1} = \Phi_k^e\bar{x}_k,\label{eq:ZOHeState}
\end{equation}
with $\Phi_k^e \coloneqq e^{Ah} - \Psi \Pi_B A$ and $\Pi_B \coloneqq B(CB)^{-1}C = I-\Pi$.
\begin{lem}\label{lem:errDiscUeqE}
 With an explicit discretization of $u^{eq}$, the discretization error $\Delta\bar{\sigma}_{k}$
 is of order $\mathcal{O}(h^2)$.
\end{lem}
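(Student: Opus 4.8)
The plan is to read off $\Delta\bar\sigma_k$ directly from the closed-loop dynamics \eqref{eq:ZOHeState} and then expand the matrix exponential and the matrix $\Psi$ in powers of $h$, keeping track of terms up to first order. First I would write $\bar\sigma_{k+1} = C\bar x_{k+1} = C\Phi_k^e\bar x_k$, so that
\begin{equation}
 \Delta\bar\sigma_k = C\bigl(\Phi_k^e - I\bigr)\bar x_k = C\bigl(e^{Ah} - I - \Psi\Pi_B A\bigr)\bar x_k ,
\end{equation}
where $\Psi$ is the matrix introduced right after \eqref{eq:Bstar}.

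Next I would substitute $e^{Ah} - I = Ah + \tfrac{1}{2}A^2 h^2 + \mathcal{O}(h^3)$ and $\Psi = \sum_{l=0}^{\infty}\frac{A^l h^{l+1}}{(l+1)!} = hI + \tfrac{1}{2}A h^2 + \mathcal{O}(h^3)$. Using $\Pi = I - \Pi_B$ this yields
\begin{equation}
 e^{Ah} - I - \Psi\Pi_B A = h(I-\Pi_B)A + \tfrac{1}{2}h^2 A(I-\Pi_B)A + \mathcal{O}(h^3) = h\,\Pi A + \tfrac{1}{2}h^2 A\Pi A + \mathcal{O}(h^3) ,
\end{equation}
i.e. $\Phi_k^e = I + h\Pi A + \mathcal{O}(h^2)$, which is just the first-order consistency of the ZOH scheme with the projected dynamics \eqref{ProjDyn}.

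The decisive observation — really the entire content of the lemma — is that the first-order term is annihilated by $C$ on the left, since $C\Pi = 0$ is one of the two properties of the projector $\Pi$ recalled after \eqref{ProjDyn}. Hence $C\bigl(e^{Ah} - I - \Psi\Pi_B A\bigr) = \tfrac{1}{2}h^2\,C A\Pi A + \mathcal{O}(h^3)$, so that
\begin{equation}
 \Delta\bar\sigma_k = \tfrac{1}{2}h^2\, C A\Pi A\,\bar x_k + \mathcal{O}(h^3) = \mathcal{O}(h^2) .
\end{equation}

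I do not expect any genuine difficulty. The only point that deserves a word of care is that every remainder above is a matrix depending polynomially on $h$ with $A$ and $C$ fixed, so that after multiplying by the fixed vector $\bar x_k$ one gets $\Delta\bar\sigma_k/h^2 \to \tfrac{1}{2}C A\Pi A\,\bar x_k$ as $h\to0$; this is exactly the meaning of $\mathcal{O}(h^2)$ in the definition given in the excerpt, applied with $x = \bar x_k$. Since $C A\Pi A$ is in general nonzero, the estimate cannot be sharpened, so the order is exactly two.
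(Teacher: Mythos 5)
Your proof is correct and follows essentially the same route as the paper: both start from the closed-loop recursion \eqref{eq:ZOHeState}, expand $e^{Ah}-I-\Psi\Pi_B A$ to second order in $h$, and use $C\Pi=0$ to cancel the first-order term, arriving at $\Delta\bar\sigma_k=\tfrac{h^2}{2}CA\Pi A\,\bar x_k+\mathcal{O}(h^3)$. Your closing remark on why the remainder terms conform to the paper's definition of $\mathcal{O}(h^2)$ is a small but welcome addition.
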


\begin{proof}
 Starting from~\eqref{eq:ZOHeState}, one obtains
 \begin{align}
 \bar{x}_{k+1} - \bar{x}_k &= (e^{Ah} - I)\bar{x}_k - \Psi\Pi_BA\bar{x}_k.\label{eq:fullZOHe}\\
 \intertext{Hence, using the definition of $\Psi$ in Section~\ref{sec:disc_methods}:}
 \Delta\bar{\sigma}_k &= \bar{\sigma}_{k+1}-\bar{\sigma}_k = C\left(e^{Ah}-I - \Psi\Pi_BA\right)\bar{x}_k\label{eq:deltaSigmaE}\\
 &= C\left(Ah + \frac{A^2h^2}{2} - \left(h+\frac{Ah^2}{2}\right)\Pi_BA + \mathcal{O}(h^3)\right)\bar{x}_k\\
 &= C\left(\left( h + \frac{Ah^2}{2}\right)(I-\Pi_B) A\right)\bar{x}_k + \mathcal{O}(h^3)\\
 &= hC\Pi A\bar{x}_k + \frac{h^2}{2}CA\Pi A\bar{x}_k + \mathcal{O}(h^3)\\
 &= \frac{h^2}{2}CA\Pi A\bar{x}_k + \mathcal{O}(h^3).\label{errZOHeUeq}
\end{align}
The action of $u^{eq}_{k,e}$ does not keep $\bar{\sigma}$ constant and the error is of order $\mathcal{O}(h^2)$.
\end{proof}

\subsubsection{Implicit discretization}
The recurrence equation \eqref{eq:ZOH} combined with \eqref{UeqZOHi} yields
\begin{align}
 \bar{x}_{k+1} &= e^{Ah}\bar{x}_k - \Psi\Pi_B A\bar{x}_{k+1},\label{eq:fullZOHi}\\
 \intertext{that is:}
 \bar{x}_{k+1} &= W^{-1}e^{Ah}\bar{x}_k,
\end{align}
with $W = I + \Psi\Pi_B A$.
\begin{lem}\label{lem:order_impl_eq}
 With an implicit discretization of $u^{eq}$, the discretization error $\Delta\bar{\sigma}_{k}$
 is of order $\mathcal{O}(h^2)$.
\end{lem}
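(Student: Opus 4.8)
The plan is to mirror the computation carried out in the proof of Lemma~\ref{lem:errDiscUeqE}, but starting from the implicit closed-loop recurrence $\bar{x}_{k+1} = W^{-1}e^{Ah}\bar{x}_k$ with $W = I + \Psi\Pi_B A$. The key observation is that, to leading order in $h$, the implicit scheme and the explicit scheme must agree, since $W^{-1} = I - \Psi\Pi_B A + \mathcal{O}(h^2)$ and $\Psi = hI + \mathcal{O}(h^2)$; the whole question is what happens at order $h^2$.

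First I would expand $W^{-1}$. Writing $\Psi = hI + \tfrac{h^2}{2}A + \mathcal{O}(h^3)$, we get $W = I + h\Pi_B A + \tfrac{h^2}{2}A\Pi_B A + \mathcal{O}(h^3)$, and hence, using the Neumann-type expansion $(I+M)^{-1} = I - M + M^2 + \mathcal{O}(\|M\|^3)$ with $M = h\Pi_B A + \tfrac{h^2}{2}A\Pi_B A + \mathcal{O}(h^3)$,
\begin{equation*}
 W^{-1} = I - h\Pi_B A - \frac{h^2}{2}A\Pi_B A + h^2(\Pi_B A)^2 + \mathcal{O}(h^3).
\end{equation*}
Then I would multiply by $e^{Ah} = I + hA + \tfrac{h^2}{2}A^2 + \mathcal{O}(h^3)$ and collect terms. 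Next, as in the explicit case, I compute $\Delta\bar{\sigma}_k = C(W^{-1}e^{Ah} - I)\bar{x}_k$. The crucial simplifications are the identities $C\Pi = 0$ (equivalently $C\Pi_B = C$) and $C = C\Pi_B$ acting on the left, so that the first-order term $hC(A - \Pi_B A)\bar{x}_k = hC\Pi A\bar{x}_k$ vanishes exactly — this is the discrete analogue of the invariance property \eqref{defUeq}. What survives is the $h^2$ term, which I expect to reduce to something like $\tfrac{h^2}{2}CA\Pi A\,\bar{x}_k$ up to sign, matching the explicit case, or possibly its negative; the sign is irrelevant for the $\mathcal{O}(h^2)$ claim.

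The main obstacle is purely bookkeeping: keeping the $h^2$-order terms straight through the product $W^{-1}e^{Ah}$, since there are three sources of quadratic terms (the $A^2$ in $e^{Ah}$, the $A\Pi_B A$ from the second-order part of $\Psi$, and the $(\Pi_B A)^2$ from squaring $M$ in the Neumann expansion), and one must then left-multiply by $C$ and repeatedly apply $C\Pi_B = C$ to cancel everything of order $h$. I would organize the computation by factoring $A - \Pi_B A = \Pi A$ wherever possible, exactly as in \eqref{errZOHeUeq}, so that the cancellation of the first-order term is manifest, and then identify the coefficient of $h^2$. Once the leading nonzero term is exhibited as a fixed matrix times $\bar{x}_k$, the conclusion $\Delta\bar{\sigma}_k = \mathcal{O}(h^2)$ follows directly from the definition of $\mathcal{O}(h^l)$, and I would close by remarking, as in Lemma~\ref{lem:errDiscUeqE}, that the implicit equivalent control also fails to keep $\bar{\sigma}$ exactly constant, with the same $h^2$ order of error.
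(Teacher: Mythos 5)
Your proposal is correct and follows essentially the same route as the paper: expand $W^{-1}e^{Ah}$ to second order via the Neumann series, left-multiply by $C$, and use $C\Pi_B=C$ (equivalently $C\Pi=0$) so that the $\mathcal{O}(h)$ term cancels exactly, leaving an $\mathcal{O}(h^2)$ remainder. The only difference is that the paper carries the bookkeeping to the end and identifies the leading term as $-\tfrac{h^2}{2}CA\Pi A\,\bar{x}_k$, the exact negative of the explicit case --- a sign you leave undetermined (harmless for this lemma) but which is precisely what the cancellation argument in Lemma~\ref{lem:midpoint} relies on.
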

\begin{proof}
 There exists a Taylor expansion for $W^{-1}$ if $\Psi\Pi_B A$ has all its eigenvalues in the unit disk.
 Since $\Psi\to0$ as $h\to0$, it is always possible to find an $h_0$ such that this condition holds
 for all $h_0>h>0$. Since we are interested in an asymptotic property, such restriction on $h$ does not
 play any role.
 Let us compute the finite expansion of $W^{-1}e^{Ah}$:
\begin{align}
 W^{-1}e^{Ah} &= \left( I - \Psi\Pi_BA + (\Psi\Pi_BA)^2 \right)\left( I + Ah + \frac{A^2h^2}{2} \right) + \mathcal{O}(h^3)\\
 &= I - \Psi\Pi_BA  + Ah + (\Psi\Pi_BA)^2  - h\Psi\Pi_BA^2 + \frac{A^2h^2}{2} + \mathcal{O}(h^3)\\
 &= I - (\Pi_BA - A)h + \left(-\frac{\Pi_BA\Pi_BA}{2} + \Pi_BA\Pi_BA - \Pi_BA^2 + \frac{A^2}{2}\right)h^2 + \mathcal{O}(h^3)\\
 &= I -(\Pi_BA - A)h + \left(\frac{\Pi_BA\Pi_BA}{2} - \Pi_BA^2 + \frac{A^2}{2}\right)h^2 + \mathcal{O}(h^3).
\end{align}
Then the variation of the sliding variable is
\begin{align}
 \Delta\bar{\sigma}_k &= C(W^{-1}e^{Ah}-I)\bar{x}_k = \label{eq:deltaSigmaI}
 h(-A + A)\bar{x}_k + h^2\left(\frac{CA\Pi_BA}{2} - \frac{CA^2}{2}\right)\bar{x}_k + \mathcal{O}(h^3)\\
 &= \frac{h^2}{2}CA(I-\Pi_B)A\bar{x}_k + \mathcal{O}(h^3)\\
 &= -\frac{h^2}{2}CA\Pi A\bar{x}_k + \mathcal{O}(h^3).\label{Oh2impl}
\end{align}
\end{proof}
\begin{lem}\label{lem:midpoint}
 With a midpoint method \eqref{UeqZOHm}, the error is of order $\mathcal{O}(h^3)$.
\end{lem}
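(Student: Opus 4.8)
The plan is to mirror the two preceding proofs, but now working with the midpoint rule $\bar{u}^{eq}_{k,m} = \frac{1}{2}(\bar{u}^{eq}_{k,e}+\bar{u}^{eq}_{k,i}) = -\frac{1}{2}(CB)^{-1}CA(\bar{x}_k + \bar{x}_{k+1})$. Substituting this into the ZOH recurrence~\eqref{eq:ZOH} with $\bar{u}^s_k = 0$ gives
\begin{equation*}
 \bar{x}_{k+1} = e^{Ah}\bar{x}_k - \tfrac{1}{2}\Psi\Pi_B A(\bar{x}_k + \bar{x}_{k+1}),
\end{equation*}
so that $\bigl(I + \tfrac{1}{2}\Psi\Pi_B A\bigr)\bar{x}_{k+1} = \bigl(e^{Ah} - \tfrac{1}{2}\Psi\Pi_B A\bigr)\bar{x}_k$, i.e. $\bar{x}_{k+1} = V^{-1}\bigl(e^{Ah} - \tfrac{1}{2}\Psi\Pi_B A\bigr)\bar{x}_k$ with $V \coloneqq I + \tfrac{1}{2}\Psi\Pi_B A$. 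Exactly as in Lemma~\ref{lem:order_impl_eq}, since $\Psi\to 0$ as $h\to 0$, there is an $h_0>0$ below which $\tfrac{1}{2}\Psi\Pi_B A$ has spectral radius less than one, so $V^{-1}$ admits a Neumann/Taylor expansion; this restriction is harmless for an asymptotic statement.

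The core of the proof is the Taylor expansion of $\Delta\bar\sigma_k = C\bigl(V^{-1}(e^{Ah} - \tfrac{1}{2}\Psi\Pi_B A) - I\bigr)\bar{x}_k$ carried to order $h^3$. I would expand $\Psi = hI + \tfrac{1}{2}h^2 A + \tfrac{1}{6}h^3 A^2 + \mathcal{O}(h^4)$, $V^{-1} = I - \tfrac{1}{2}\Psi\Pi_B A + \tfrac{1}{4}(\Psi\Pi_B A)^2 - \tfrac{1}{8}(\Psi\Pi_B A)^3 + \mathcal{O}(h^4)$, and $e^{Ah} = I + hA + \tfrac{1}{2}h^2A^2 + \tfrac{1}{6}h^3A^3 + \mathcal{O}(h^4)$, multiply out keeping all terms through $h^3$, and then left-multiply by $C$. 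The expected mechanism is that the midpoint rule is the symmetric average of the explicit and implicit schemes, whose $\mathcal{O}(h^2)$ errors are $+\tfrac{h^2}{2}CA\Pi A\bar{x}_k$ and $-\tfrac{h^2}{2}CA\Pi A\bar{x}_k$ (see~\eqref{errZOHeUeq} and~\eqref{Oh2impl}): being negatives of each other, they cancel in the average, leaving an error of order $h^3$. A clean way to organize the computation is to verify directly that the $h^1$ coefficient of $C(V^{-1}(e^{Ah}-\tfrac12\Psi\Pi_B A)-I)$ vanishes because $C\Pi = 0$ (equivalently $C\Pi_B = C$), and that the $h^2$ coefficient reduces, after using $C\Pi_B = C$ wherever a leading $C$ meets a $\Pi_B$ and collecting the three contributions (from $e^{Ah}$, from $-\tfrac12\Psi\Pi_B A$, and from the $-\tfrac12\Psi\Pi_B A$ factor inside $V^{-1}$), to a combination that is identically zero; this is essentially the statement that midpoint is second-order accurate on the invariance condition.

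The main obstacle is purely bookkeeping: at order $h^2$ and $h^3$ several non-commuting products of $A$, $\Pi_B$ and $C$ appear, and the cancellations rely on repeatedly rewriting $C\Pi_B = C$ (hence $C(I-\Pi_B) = C\Pi = 0$) only when the $C$ is genuinely on the far left of a product, not after an intervening $A$. I would therefore carry the expansion symbolically, substitute $C\Pi_B \rightsquigarrow C$ only as the final simplification step, and check that the surviving $h^3$ coefficient is some fixed matrix times $\bar{x}_k$ — its precise form is not needed for the statement, only that it is $\mathcal{O}(h^3)$ and generically nonzero, so that the order is exactly three. As a consistency check I would confirm that setting the implicit weight to $1$ (resp. $0$) instead of $\tfrac12$ recovers~\eqref{Oh2impl} (resp.~\eqref{errZOHeUeq}), which validates the expansion before reading off the midpoint result.
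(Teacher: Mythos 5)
Your proposal is correct, and its key mechanism is exactly the paper's: the $\mathcal{O}(h^2)$ error terms of the explicit and implicit schemes, \eqref{errZOHeUeq} and \eqref{Oh2impl}, are opposite and cancel in the midpoint average. The execution differs, though. The paper's proof is a three-line shortcut: it writes the midpoint recurrence as the average of the right-hand sides of \eqref{eq:fullZOHe} and \eqref{eq:fullZOHi} and asserts that the discretization error is therefore the mean of the two errors already computed, so the $h^2$ term vanishes. You instead solve the midpoint recurrence explicitly, $\bar{x}_{k+1} = V^{-1}\bigl(e^{Ah} - \tfrac{1}{2}\Psi\Pi_B A\bigr)\bar{x}_k$ with $V = I + \tfrac{1}{2}\Psi\Pi_B A$, and propose a direct Taylor expansion through $h^3$ in the spirit of Lemma~\ref{lem:order_impl_eq}, checking that the $h$ and $h^2$ coefficients are annihilated by $C$ via $C\Pi_B = C$. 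That route is heavier bookkeeping (and you leave the expansion itself as a plan rather than carrying it out, though the claimed cancellations do check out: the $h^2$ coefficient of the propagator is $\tfrac{1}{2}(A^2 - A\Pi_B A - \Pi_B A^2 + \Pi_B A\Pi_B A)$, which $C$ kills), but it buys something: it makes rigorous the paper's slightly informal step, since the midpoint iterate is not exactly the mean of the explicit and implicit iterates — the implicit half is evaluated at $\bar{x}_{k+1}^m$ rather than at the implicit scheme's own iterate — and justifying that this discrepancy is $\mathcal{O}(h^3)$ is precisely what your self-contained expansion supplies. Your consistency check (weight $1$ or $0$ recovering \eqref{Oh2impl} or \eqref{errZOHeUeq}) is a sensible addition not present in the paper.
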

\begin{proof}
 With the midpoint method \eqref{UeqZOHm}, the recurrence equation is
 \begin{align}
  \bar{x}_{k+1} &= 1/2(e^{Ah}\bar{x}_k -\Psi\Pi_BA\bar{x}_{k}) +1/2(e^{Ah}\bar{x}_k -\Psi\Pi_BA\bar{x}_{k+1}).
 \end{align}
  The two terms are the right-hand side in~\eqref{eq:fullZOHe} and \eqref{eq:fullZOHi}. Then the
  discretization error is the mean of the discretization error of the explicit and implicit case.
  Since the first term in \eqref{Oh2impl} is the opposite of the first one in \eqref{errZOHeUeq},
  the term in $h^2$ vanishes and the error is of order $\mathcal{O}(h^3)$.
\end{proof}

\subsection{Discretization of both control inputs}
In the following, we consider the sliding variable dynamics with the state-continuous and
discontinuous control. It is expected that $\bar{\sigma}$ goes to $0$ and once it reaches zero,
stays at this value. The proposed metric to measure the performance of the discrete-time controller
is the Euclidean norm of the sliding variable when the system state is close to the sliding manifold.
Let $\varepsilon_k \coloneqq \|\bar{\sigma}_{k+1}\|$ be the discretization error when $\|\bar{\sigma}_{k}\|$ is
small enough.

\subsubsection{Explicit discretization}\label{sec:fullExpl}
In the sliding mode literature, several proposals (seven of them are listed in \cite{furuta2002discrete})
have been made to analyse the behaviour of the closed-loop system near the sliding manifold and
to propose new variable structure control strategies. Despite this,
it is still difficult to analyse the behaviour near the sliding manifold, besides stability. Thus we only
study the invariance of a close neighborhood of the sliding manifold, also to provide an estimate of the
chattering due to the discrete discontinuous control.
\begin{lem}
 Let the closed-loop system state in~\eqref{eq:ZOH} be in an $\mathcal{O}(h^2)$-neighborhood of the sliding manifold at $t=t_k$,
 i.e. $\bar{\sigma}_{k}=\mathcal{O}(h^2)$, but with $\bar{\sigma}_k\neq0$.
 If the discontinuous part $u^s$ of the control is discretized using the explicit scheme~\eqref{eq:Use}, then the
 discretization error $\varepsilon_k$ is of order $\mathcal{O}(h)$ and the system exits
 the $\mathcal{O}(h^2)$-neighborhood.
\end{lem}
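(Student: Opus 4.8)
The plan is to combine the ZOH recurrence \eqref{eq:ZOH} with the explicit discontinuous input \eqref{eq:Use} and the explicit equivalent input \eqref{UeqZOHe}, and track the sliding variable to first order in $h$. First I would write $\bar{\sigma}_{k+1} = C\bar{x}_{k+1} = C(e^{Ah}\bar{x}_k + B^*\bar{u}^{eq}_{k,e} + B^*\bar{u}^s_k)$. The equivalent-control contribution is exactly the quantity already analysed in Lemma~\ref{lem:errDiscUeqE}: it gives $C(e^{Ah} - \Psi\Pi_BA)\bar{x}_k = \bar{\sigma}_k + \mathcal{O}(h^2)$ by \eqref{errZOHeUeq}, and since by hypothesis $\bar{\sigma}_k = \mathcal{O}(h^2)$, this whole block is $\mathcal{O}(h^2)$. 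The remaining term is $CB^*\bar{u}^s_k = CB^*(-\alpha\sgn(\bar{\sigma}_k))$. Using $B^* = \Psi B$ and $\Psi = hI + \mathcal{O}(h^2)$, this equals $-\alpha h\, CB\,\sgn(\bar{\sigma}_k) + \mathcal{O}(h^2)$.

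The key point is that $CB$ is full rank (strict vector relative degree $(1,\dots,1)$, stated in Section~\ref{sec:recall}), so $CB\,\sgn(\bar{\sigma}_k)$ is a nonzero vector whose norm is bounded below by a positive constant independent of $h$ (there are only finitely many sign vectors in $\{-1,1\}^p$, and $CB$ maps each to a nonzero vector; for components of $\bar\sigma_k$ that are exactly zero one uses $\sgn(0)=0$, but as long as $\bar\sigma_k\neq 0$ at least one component survives and the argument goes through with the relevant submatrix of $CB$, which still has a trivial kernel on that coordinate subspace). Therefore $\|CB^*\bar{u}^s_k\| = \alpha h\,\|CB\,\sgn(\bar\sigma_k)\| + \mathcal{O}(h^2) \geq c\,h$ for some $c>0$ and all small $h$. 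Collecting terms, $\|\bar\sigma_{k+1}\| = \|CB^*\bar u^s_k + \mathcal{O}(h^2)\| = \Theta(h)$, i.e. $\varepsilon_k = \mathcal{O}(h)$ and, more importantly, $\varepsilon_k$ is bounded below by a positive multiple of $h$. Since $h \gg h^2$ for small $h$, this forces $\bar\sigma_{k+1}$ out of any fixed $\mathcal{O}(h^2)$-neighborhood of the manifold, which is the claimed conclusion.

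The main obstacle, and the only place requiring care, is the lower bound on $\|CB\,\sgn(\bar\sigma_k)\|$ when some entries of $\bar\sigma_k$ vanish: one must argue that the surviving sign pattern, restricted to the nonzero coordinates, still produces a vector bounded away from zero uniformly. This follows because the relevant columns of $CB$ are linearly independent (a submatrix of a full-rank square matrix with a nonempty column set), so the image of any nonzero $\{-1,1\}$-vector on those coordinates is nonzero, and a finite minimum over the finitely many such patterns gives the uniform constant $c$. Everything else is a routine Taylor expansion of $e^{Ah}$ and $\Psi$ to first order, reusing \eqref{errZOHeUeq} verbatim for the equivalent-control part.
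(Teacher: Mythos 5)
Your proposal is correct and follows essentially the same route as the paper: the same decomposition $\bar{\sigma}_{k+1}=\bar{\sigma}_k+\Delta_k-\alpha CB^{*}\sgn(\bar{\sigma}_k)$ with $\Delta_k=\mathcal{O}(h^2)$ reused from Lemma~\ref{lem:errDiscUeqE}, and the same identification of $-\alpha h\,CB\sgn(\bar{\sigma}_k)$ as the dominant, order-$h$ term. The only differences are cosmetic: you estimate the vector directly by the triangle inequality where the paper expands $\|\bar{\sigma}_{k+1}\|^2$ and bounds the cross terms by Cauchy--Schwarz, and your lower bound via invertibility of $CB$ and the finitely many sign patterns (handling zero components of $\bar{\sigma}_k$) is in fact a slightly more careful justification than the paper's eigenvalue estimate $\lambda_m h\alpha\sqrt{p}\leq\|hCB\sgn(\bar{\sigma}_k)\|$.
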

\begin{proof}
 Starting from equation \eqref{eq:ZOH} and using the control inputs $\bar{u}^{eq}_k = -(CB^{*})^{-1}CA\bar{x}_k$
and $\bar{u}^s_k = -\alpha\sgn(C\bar{x}_k)$, we have the following:
\begin{align}
 \bar{\sigma}_{k+1} &= C(e^{Ah} - \Psi\Pi_BA)\bar{x}_k - CB^{*}\sgn(C\bar{x}_k)
 \intertext{that is}
 \bar{\sigma}_{k+1} &= \bar{\sigma}_k + \Delta_k - CB^{*}\sgn(\bar{\sigma}_k),\label{RecSigmaE}
 \intertext{with}
 \Delta_k &\coloneqq C(e^{Ah} - I - \Psi\Pi_BA)\bar{x}_k.\label{eq:Delta_k}
\end{align}
Let us study the square of the norm of the sliding variable:
\begin{align}
 \bar{\sigma}_{k+1}^T\bar{\sigma}_{k+1} = \|\bar{\sigma}_k\|^2 + \|\Delta_k\|^2 + \|CB^{*}\sgn(\bar{\sigma}_k)\|^2
 + \bar{\sigma}_k^T\Delta_k - \bar{\sigma}_k^TCB^{*}\sgn(\bar{\sigma}_k) - \Delta_k^TCB^{*}\sgn(\bar{\sigma}_k).\label{eq:SigDTe}
\end{align}
From Lemma~\ref{lem:errDiscUeqE}, we have $\|\Delta_k\|^2 = \mathcal{O}(h^4)$.
For each other term, we can compute its order with respect to $h$:
\begin{align}
 \|CB^{*}\alpha\sgn(\bar{\sigma}_k)\|^2 &= \left\|\sum_{k=0}^{\infty} h^{l+1}\frac{CA^lB}{(l+1)!}\alpha\sgn(\bar{\sigma}_k)\right\|^2\notag{}\\
 &\leq \|hCB\alpha\sgn(\bar{\sigma}_k)\|^2 + \mathcal{O}(h^3)\label{eq:chat_sgn}\\
 \Delta_k^TCB^{*}\alpha\sgn(\bar{\sigma}_k) &= \mathcal{O}(h^3).
\end{align}
Using the Cauchy-Schwarz inequality on the remaining terms yields
\begin{align}
 |\bar{\sigma}_k^T\Delta_k| &\leq \|\bar{\sigma}_k\| \|\Delta_k\|\\
 |\bar{\sigma}_k^TCB^{*}\sgn(\bar{\sigma}_k)| &\leq \|\bar{\sigma}_k\| \|CB^{*}\sgn(\bar{\sigma}_k)\|.
\end{align}
If $\|\bar{\sigma}_k\| = \mathcal{O}(h^2)$, then the above terms are of order $\mathcal{O}(h^4)$ and $\mathcal{O}(h^3)$.
Hence, the dominant term in \eqref{eq:SigDTe} will be $\|hCB\alpha\sgn(\bar{\sigma}_k)\|^2$. Let $\{\lambda_i\}$ be the spectrum
of $hCB$, with $\lambda_m = \min_i |\lambda_i|$ and $\lambda_M = \max_i |\lambda_i|$. We have the following:
\begin{equation}
 \lambda_m h\alpha\sqrt{p} \leq \|hCB\sgn(\bar{\sigma}_k)\| \leq \lambda_Mh\alpha\sqrt{p}.
\end{equation}
Inserting this in~\eqref{eq:SigDTe} yields that $\|\bar{\sigma}_{k+1}\|$ has order $\mathcal{O}(h)$.
\end{proof}
%
Therefore with an explicit discretization of $u^s$, the main error comes
from the discretization of the discontinuous control $u^s$, since it increases the error by an order $h$.

\subsubsection{Implicit discretization}

In the following, we are interested in studying the discretization error in the same context as for
the previous lemma.
\begin{lem}
 Let the closed-loop system be
 in the discrete-time sliding phase, as defined in Definition~\ref{de:dtSlidingPhase}.
 If the discontinuous part $u^s$ of the control is discretized using an implicit scheme,
 then the discretization error $\varepsilon_{k}$ has the same order as the discretization
 error $\Delta\bar{\sigma}_k$ on $u^{eq}$. That is $\mathcal{O}(h^2)$ for the methods~\eqref{UeqZOHe}
 and~\eqref{UeqZOHi}, and $\mathcal{O}(h^3)$ for the midpoint method~\eqref{UeqZOHm}.
\end{lem}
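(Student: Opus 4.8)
The plan is to use the defining feature of the discrete-time sliding phase: when $\bar u^s$ is implicitly discretized and takes values in $(-\alpha,\alpha)^p$, the auxiliary variable $\widetilde{\sigma}_{k+1}$ in \eqref{eq:genDTsys} is forced to zero, so that the residual error on the true sliding variable $\bar\sigma_{k+1}$ is, up to higher-order terms, nothing but the error $\Delta\bar\sigma_k$ made by discretizing $u^{eq}$, which was already computed in Lemmas~\ref{lem:errDiscUeqE}, \ref{lem:order_impl_eq} and \ref{lem:midpoint}.

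First I would record that $\bar u^s_k\in(-\alpha,\alpha)^p$ puts $\bar u^s_k$ in the interior of the box $[-\alpha,\alpha]^p$, so $N_{[-\alpha,\alpha]^p}(\bar u^s_k)=\{0\}$; since the last line of \eqref{eq:genDTsys} reads $\widetilde{\sigma}_{k+1}\in -N_{[-\alpha,\alpha]^p}(\bar u^s_k)$, this gives $\widetilde{\sigma}_{k+1}=0$, that is $CB^{*}\bar u^s_k=-\bar\sigma_k$. As $CB^{*}$ is a $\mathbf{P}$-matrix it is invertible, so $\bar u^s_k=-(CB^{*})^{-1}\bar\sigma_k$ and, since $CB^{*}=\mathcal{O}(h)$, one has $\|\bar u^s_k\|=\mathcal{O}(h^{-1}\|\bar\sigma_k\|)$: the implicit input is as small as $h^{-1}\bar\sigma_k$.

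Next I would plug $CB^{*}\bar u^s_k=-\bar\sigma_k$ into $\bar\sigma_{k+1}=C\bar x_{k+1}=Ce^{Ah}\bar x_k+CB^{*}\bar u^{eq}_k+CB^{*}\bar u^s_k$ to get $\bar\sigma_{k+1}=C(e^{Ah}-I)\bar x_k+CB^{*}\bar u^{eq}_k$. Running the same $u^{eq}$ scheme along the $\bar u^s\equiv0$ trajectory, and calling $\check{u}^{eq}_k$ the resulting equivalent control, Section~\ref{sec:perf} gives $\Delta\bar\sigma_k=C(e^{Ah}-I)\bar x_k+CB^{*}\check{u}^{eq}_k$; subtracting, $\bar\sigma_{k+1}=\Delta\bar\sigma_k+CB^{*}(\bar u^{eq}_k-\check{u}^{eq}_k)$. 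For the explicit scheme \eqref{UeqZOHe}, $\bar u^{eq}_k=-(CB)^{-1}CA\bar x_k$ does not see $\bar x_{k+1}$, so $\bar u^{eq}_k=\check{u}^{eq}_k$ and $\bar\sigma_{k+1}=\Delta\bar\sigma_k$ \emph{exactly}; hence $\varepsilon_k=\mathcal{O}(h^2)$ by Lemma~\ref{lem:errDiscUeqE}. For the implicit \eqref{UeqZOHi} and midpoint \eqref{UeqZOHm} schemes, $\bar u^{eq}_k$ uses $\bar x_{k+1}$; subtracting the corresponding implicit recurrence with and without $\bar u^s$ gives $\bar x_{k+1}-\check{x}_{k+1}=(I+\Psi\Pi_B A)^{-1}B^{*}\bar u^s_k$ (resp.\ $(I+\tfrac12\Psi\Pi_B A)^{-1}B^{*}\bar u^s_k$), so that $\bar u^{eq}_k-\check{u}^{eq}_k=-\kappa(CB)^{-1}CA(\bar x_{k+1}-\check{x}_{k+1})$ with $\kappa=1$ (resp.\ $\tfrac12$), whence $CB^{*}(\bar u^{eq}_k-\check{u}^{eq}_k)=\mathcal{O}(h^2\|\bar u^s_k\|)=\mathcal{O}(h\|\bar\sigma_k\|)$ using $B^{*}=\mathcal{O}(h)$, the resolvent being $I+\mathcal{O}(h)$, and the bound of the previous paragraph.

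It then remains to feed this back: once the closed loop has settled in the discrete-time sliding phase close to the manifold, the relation $\bar\sigma_{k+1}=\Delta\bar\sigma_k+\mathcal{O}(h\|\bar\sigma_k\|)$ together with $\Delta\bar\sigma_k=\mathcal{O}(h^2)$ (resp.\ $\mathcal{O}(h^3)$) from Lemmas~\ref{lem:order_impl_eq} and \ref{lem:midpoint} forces, by induction on $k$, $\|\bar\sigma_k\|=\mathcal{O}(h^2)$ for \eqref{UeqZOHi} and $\|\bar\sigma_k\|=\mathcal{O}(h^3)$ for \eqref{UeqZOHm}; the correction $\mathcal{O}(h\|\bar\sigma_k\|)$ is then of strictly higher order than $\Delta\bar\sigma_k$, and $\varepsilon_k=\|\bar\sigma_{k+1}\|$ inherits the order of $\Delta\bar\sigma_k$, i.e.\ $\mathcal{O}(h^2)$ for \eqref{UeqZOHe} and \eqref{UeqZOHi} and $\mathcal{O}(h^3)$ for \eqref{UeqZOHm}. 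The delicate point is precisely this last step for the implicit and midpoint equivalent controls: unlike the explicit case, where $\bar\sigma_{k+1}$ is literally the $u^{eq}$ discretization error, these controls react to the perturbed next state, and controlling the induced correction requires knowing that $\bar u^s_k$ — hence $\bar\sigma_k$ — is already small, which is why the statement is set in the regime where the system sits in the discrete-time sliding phase near the sliding manifold.
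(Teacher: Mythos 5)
Your proof is correct and follows essentially the same route as the paper: in the discrete-time sliding phase the implicit relation forces $\widetilde{\sigma}_{k+1}=0$, so $\bar{\sigma}_{k+1}$ reduces to the $u^{eq}$-discretization error $C(e^{Ah}-I)\bar{x}_k + CB^{*}\bar{u}^{eq}_k$, whose order is then read off from Lemmas~\ref{lem:errDiscUeqE}, \ref{lem:order_impl_eq} and \ref{lem:midpoint}. Your additional bookkeeping for the schemes \eqref{UeqZOHi} and \eqref{UeqZOHm} --- comparing $\bar{u}^{eq}_k$ with its $\bar{u}^s\equiv0$ counterpart, bounding the correction by $\mathcal{O}(h\|\bar{\sigma}_k\|)$, and closing with an induction --- is actually more careful than the paper, which simply identifies $\Delta_k$ with the quantities computed under $\bar{u}^s\equiv0$ and passes over this cross term in silence.
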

\begin{proof}
 Let $\Delta_k=C(e^{Ah}-I)x_k + C\Psi B\bar{u}^{eq}_k$, with $\bar{u}^{eq}_k$ any method in~\eqref{UeqZOHe}-\eqref{UeqZOHm}.
 The system is supposed to be in the discrete-time sliding phase, that is $u^{s}_{k}\in(-\alpha,\alpha)^p$.
 Then $\widetilde{\sigma}_{k+1} = \bar{\sigma}_k + CB^{*}\bar{u}_k^s = 0$. From~\eqref{eq:ZOH} one has:
\begin{align}
 \bar{\sigma}_{k+1} &= \bar{\sigma}_k + \Delta_k + CB^{*}\bar{u}_k^s\label{eq:RecSigmaI}= \Delta_k
\end{align}
Let us go through all the possible discretization of $u^{eq}$.
In the explicit case~\eqref{UeqZOHe}, $\Delta_k$ is the quantity studied in~\eqref{eq:deltaSigmaE}-\eqref{errZOHeUeq}
and is of order $\mathcal{O}(h^2)$. With the implicit method~\eqref{UeqZOHi}, the order of $\Delta_k$ is $\mathcal{O}(h^2)$,
as shown in~\eqref{eq:deltaSigmaI}-\eqref{Oh2impl}. With the midpoint method~\eqref{UeqZOHm}, going along the lines of
the proof of Lemma~\ref{lem:midpoint}, we get that $\Delta_k$ is of order $\mathcal{O}(h^3)$.
\end{proof}
\begin{remark}
 In Section~\ref{sec:stab_analysis}, conditions are derived to ensure that
 the system stays in the discrete-time sliding phase once it reaches it, with or without perturbation.
\end{remark}

\subsection{Disturbance and chattering}


We consider the system~\eqref{linSyst} with a non-zero perturbation. In the following we consider only
continuous perturbations.
Let us first recall some facts with a continuous-time controller.
The design procedure for the equivalent part is the same as in the unperturbed case.
If the system is in the sliding phase, it can stay on the sliding surface
despite the perturbation $\xi(t)$, if the origin is contained in the set
obtained using the Filippov's convexification procedure.
We suppose here that $CB$ is nonsingular. Then the condition for rejection can be reformulated
as $0\in-\alpha\Sgn(0)+\xi(t)$.
The condition $\alpha>\|\xi\|_{\infty}$ ensures the unconditional existence of the sliding motion.

We now investigate the performance of the discretization scheme with respect to the perturbation.
First we analyse how the perturbation affects the discrete-time dynamics of the system.
To take it into account, we just need to add a term
$p_k\coloneqq \int_{t_k}^{t_{k+1}}\!e^{A(t_{k+1}-\tau)}B\xi(\tau)\mathrm{d}\tau$ to the recurrence
relation~\eqref{eq:ZOH}. This yields:
\begin{equation}
 \bar{x}_{k+1} = e^{Ah}\bar{x}_k + B^{*}\bar{u}^{eq}_k + B^{*}\bar{u}^s_k+p_k.
\label{eq:ZOHp}
\end{equation}
Note that with the type of perturbation we consider, $p_k$ is of
order $\mathcal{O}(h^2)$.

\subsubsection{Explicit case}\label{sec:dist_chattering_explicit}

Given the expression in~\eqref{eq:ZOHp}, to take into account the perturbation,
we just have to add the term $Cp_k$ to~\eqref{RecSigmaE}:
\begin{equation}
 \bar{\sigma}_{k+1} = \bar{\sigma}_k + \Delta_k - CB^{*}\sgn(\bar{\sigma}_k) + Cp_k.
 \label{eq:SigDTep}
\end{equation}
Using the same analysis as in the previous subsection, that is computing $\|\bar{\sigma}_{k+1}\|^2$, we have all the terms
in~\eqref{eq:SigDTe}, plus the following:
\begin{align}
 |\bar{\sigma}_k^TCp_k| \leq \|\bar{\sigma}_k\|\|Cp_k\| &= \mathcal{O}(h^3)\quad\text{by Cauchy-Schwarz inequality}\\
 |\Delta_k^TCp_k| \leq \|\Delta_k\|\|Cp_k\|&= \mathcal{O}(h^3)\quad\text{by Cauchy-Schwarz inequality}\\
 (CB^{*}\alpha\sgn(\bar{\sigma}_k))^TCp_k &=  \mathcal{O}(h^2)\label{eq:chat_cross} \\
 \|Cp_k\|^2 &= \mathcal{O}(h^2).\label{eq:chat_pert}
\end{align}
Thus the dominant terms in \eqref{eq:SigDTep} are of order $h$ and are:
$\|Cp_k\|$, $\|CB^{*}\alpha\sgn(\bar{\sigma}_k)\|$ and $(CB^{*}\alpha\sgn(\bar{\sigma}_k))^TCp_k$.
Those terms induce chattering and they all have the same order
with respect to the timestep $h$.

\subsubsection{Implicit case}

Updating~\eqref{eq:RecSigmaI} to take into account the perturbation yields:
\begin{align}
 \bar{\sigma}_{k+1} &= \bar{\sigma}_k + \Delta_k - CB^{*}\sgn(\bar{\sigma}_{k+1}) + Cp_k\\
 &= \Delta_k + Cp_k.
\end{align}
Recall that $\Delta_k$ has order $\mathcal{O}(h^2)$. Hence the chattering due to the perturbation
will be predominant. In the next Section, we present a method for computing $\bar{u}^{eq}_k$
such that the discretization error $\Delta_k=0$. Then the chattering is solely due to the perturbation.

\section{Exact discrete equivalent control}\label{sec:design}

Let us propose a new control scheme for a discrete-time LTI plant using sliding mode control.
Its derivation is along the same lines as in Section~\ref{sec:recall}, that is we first design the equivalent
control $\bar{u}^{eq}$ and then the discontinuous part $\bar{u}^s$.

As showed in \eqref{slidingVarDyn}, $u^{eq}$ is defined such that the dynamics of the sliding variable depends
only on the input $u^s$. Starting from \eqref{eq:ZOH} and left multiplying by $C$, one obtains:
\begin{equation}
 C\bar{x}_{k+1} = Ce^{Ah}\bar{x}_k +  CB^{*}\bar{u}^{eq}_k + CB^{*}\bar{u}^s_k.\\
 \label{sigmaZOH}
\end{equation}
Using \eqref{projectedDyn} with $t = t_{k+1}$ and $t_0 = t_k$, we obtain
\begin{equation}
 \sigma(t_{k+1}) = C\Phi(t_{k+1}, t_k)x(t_k) + C\int_{t_k}^{t_{k+1}}\!\Phi(t_{k+1},\tau)Bu^s(\tau)\mathrm{d}\tau.
 \label{sigmaCT}
\end{equation}
Our goal is to have $C\bar{x}_{k+1} = Cx(t_{k+1})$ if $x(t_k) = \bar{x}_k$ and both $u^s$ and $\bar{u}^s$ set to $0$.
Then setting the last term of \eqref{sigmaZOH} and \eqref{sigmaCT} to $0$ and using Lemma~\ref{lem:st_mat},
 the following condition holds: 
\begin{align}
 C\Phi(t_{k+1},t_k) x(t_k) &= Ce^{Ah}\bar{x}_k +  CB^{*}\bar{u}^{eq}_k,
 \intertext{that is}
 CB^{*}\bar{u}^{eq}_k &= C(I-e^{Ah})\bar{x}_k.
 \label{UeqZOH}
\end{align}
In \cite{furuta1990sliding}, this expression for the equivalent control was already derived, when the sliding
variable is scalar. In \cite{utkin1994sliding}, using a deadbeat-like approach, a term similar to \eqref{UeqZOH} can
also be found.
If we substitute this expression for $\bar{u}^{eq}_k$ in \eqref{sigmaZOH}, then, as expected, we obtain
\begin{equation}
 \bar{\sigma}_{k+1} = \bar{\sigma}_k + CB^{*}\bar{u}^s_k,
 \label{recUs}
\end{equation}
which is the discrete counterpart of \eqref{slidingVarDyn}. For the design of $\bar{u}^s$, let us choose $\bar{u}^s_k$ such that $\bar{u}^s$ steers $\bar{\sigma}_{k}$ to $0$ in finite
time. Following the work in \cite{acary2010implicit} and
\cite{acary2012chattering}, we use an implicit discretization of the continuous-time control law.
The discrete-time sliding variable dynamics is given by \eqref{recUs}
and $\bar{u}^s_k \in -\alpha\Sgn(\bar{\sigma}_{k+1})$. Inserting \eqref{UeqZOH} in \eqref{eq:ZOH},
the discrete-time dynamics of the nominal controlled plant is
\begin{equation*}
\begin{cases}
 \bar{x}_{k+1} = (e^{Ah} + B^{*}(CB^{*})^{-1}C(I-e^{Ah}))\bar{x}_k + B^{*}\bar{u}^s_k\notag\\
 \bar{\sigma}_{k+1} = \bar{\sigma}_k + CB^{*}\bar{u}^s_k.\label{slidingVarRec}
\end{cases}
\end{equation*}
Using the framework of generalized (set-valued) equations, the discrete-time sliding variable
dynamics is
\begin{equation}
 \begin{cases}
  \bar{\sigma}_{k+1} = \bar{\sigma}_k + CB^{*}\bar{u}^s_{k}\\
  \bar{u}^s_{k} \in -\alpha\Sgn(\bar{\sigma}_{k+1}).
 \end{cases}
 \label{DTMMsys}
\end{equation}
This system has the same structure as in~\eqref{lcp:inexact}, although with the important difference that
we have here $\widetilde{\sigma}_{k+1} = \bar{\sigma}_{k+1}$. Therefore, we can show that
$\bar{\sigma}_{k}$ goes to $0$ in finite time, see Proposition~\ref{prop:ft} in Section~\ref{sec:stab_analysis}.
Hence in the nominal case, the system reaches the sliding surface at a certain
time $t_{k_0}$ and then for all $k>k_0,\; \bar{\sigma}_{k} = 0$.
\begin{lem}
 Suppose $CB^{*}$ is a $\mathbf{P}$-matrix. Then the only equilibrium pair of the system~\eqref{DTMMsys}
 is $(\bar{\sigma}_*, \bar{u}^s_*) = (0,0)$.
\end{lem}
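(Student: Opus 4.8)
The plan is to unwind the definition of an equilibrium pair of the generalized equation~\eqref{DTMMsys} and then invoke two elementary consequences of the $\mathbf{P}$-matrix hypothesis: the nonsingularity of $CB^{*}$, and the structure of the multivalued sign from Definition~\ref{de:Sgn}.

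First I would record what an equilibrium pair $(\bar{\sigma}_*,\bar{u}^s_*)$ means: it is a constant solution of the recursion, so substituting $\bar{\sigma}_{k+1}=\bar{\sigma}_k=\bar{\sigma}_*$ and $\bar{u}^s_k=\bar{u}^s_*$ into the first line of~\eqref{DTMMsys} gives $CB^{*}\bar{u}^s_*=0$, while the second line gives $\bar{u}^s_*\in-\alpha\Sgn(\bar{\sigma}_*)$. Next I would note that a $\mathbf{P}$-matrix is nonsingular: if $CB^{*}z=0$, then $z_i(CB^{*}z)_i=0\leq0$ for every $i$, so Definition~\ref{de:Pmatrix} forces $z=0$. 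Applying this with $z=\bar{u}^s_*$ yields $\bar{u}^s_*=0$.

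Then I would substitute this back into the inclusion, obtaining $0\in\Sgn(\bar{\sigma}_*)$, and read it componentwise: for each $j$, $0\in\Sgn\big((\bar{\sigma}_*)_j\big)$. Since by Definition~\ref{de:Sgn} the set $\Sgn(s)=\{\sgn(s)\}$ is a singleton not containing $0$ whenever $s\neq0$, we must have $(\bar{\sigma}_*)_j=0$ for every $j$, i.e. $\bar{\sigma}_*=0$. Hence the only equilibrium pair is $(0,0)$.

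There is no genuine obstacle here; the only points requiring a little care are the componentwise interpretation of the set-valued inclusion and the observation that the $\mathbf{P}$-matrix hypothesis enters only through the nonsingularity of $CB^{*}$ (so the weaker assumption ``$CB^{*}$ invertible'' would already suffice for this particular statement, but the stated hypothesis is retained for consistency with the uniqueness results of \cref{lem:uniqueness} and the finite-time convergence analysis in Section~\ref{sec:stab_analysis}).
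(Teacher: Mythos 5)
Your proposal is correct and follows essentially the same route as the paper's proof: use the equilibrium condition to get $CB^{*}\bar{u}^s_*=0$, invoke the nonsingularity implied by the $\mathbf{P}$-matrix hypothesis to conclude $\bar{u}^s_*=0$, and then use the definition of $\Sgn$ to force $\bar{\sigma}_*=0$. You merely make explicit two points the paper leaves implicit (the short argument that a $\mathbf{P}$-matrix is nonsingular, and the componentwise reading of the inclusion), which is fine.
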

\begin{proof}
A pair $(\bar{\sigma}, \bar{u}^s)$ is an equilibrium of \eqref{DTMMsys} if and only if $CB^{*}\bar{u}^s = 0$.
If $CB^{*}$ is a $\mathbf{P}$-matrix, then it has full-rank and $CB^{*}\bar{u}^s = 0$ is equivalent
to $\bar{u}^s = 0$. By the definition of the $\Sgn$ multifunction in~\eqref{eq:def_Sgn},
this is only possible if $\bar{\sigma} = 0$.
\end{proof}
With this scheme the two control inputs are
\begin{equation}
 \begin{cases}
  \bar{u}^{eq}_k = \left(CB^{*}\right)^{-1}C(I-e^{Ah})\bar{x}_k\\
  \bar{u}^{s}_k\qquad\text{solution of \eqref{DTMMsys}}.
 \end{cases}
 \label{eq:new_control}
\end{equation}
This controller is nonanticipative since $\bar{u}^{eq}_k$ depends only on the model parameters and $\bar{x}_k$.
Moreover $\bar{u}^s_{k}$ is the unique solution to \eqref{DTMMsys} given that $CB^{*}>0$, using similar arguments
as in Lemma~\ref{lem:uniqueness}.
This controller retains the structure of the continuous-time sliding mode controller. It is different from the approach
that can be found in \cite{utkin1994sliding} or \cite{lin2004total} since the equivalent part $u^{eq}$ is not
chosen as the solution to a deadbeat control problem. As a result, the magnitude of the control is of order
$\mathcal{O}(1)$ with respect to the timestep $h$, whereas it is of order $\mathcal{O}(h^{-1})$ in the deadbeat case,
see \cite{lin2004total}.

\section{Simulations of a 2-dimensional system}\label{sec:sim}

To illustrate the results obtained with different discretization methods,
let us simulate the following controlled system: 
\begin{equation}
 \begin{cases}
  \dot{x}(t) = Ax(t)+B\bar{u}(t)\\
  \sigma = Cx\\
  \bar{u}(t) = \bar{u}^{eq}(t) + \bar{u}^s(t)
 \end{cases}
 \label{sys:2Dunstable}
 \left.
 \begin{aligned}
  A &= \begin{pmatrix}
0& 1\\19&-2\\
\end{pmatrix},\\
  B &= \begin{pmatrix}0\\1 \end{pmatrix},\;
  C^T = \begin{pmatrix}1\\1\end{pmatrix}.
 \end{aligned}\right.
\end{equation}
The matrix A has the eigenvalues $\lambda_1=3.47$ and $\lambda_2=-5.47$. The dynamics on the
sliding surface is given by $\Pi A = \begin{pmatrix}0&1\\0&-1\end{pmatrix}$, which has eigenvalues $0$ and $-1$.
Through this section, we chose $\alpha=1$.
The initial state is $(-15, 20)^T$. The first set of simulations uses
a timestep $0.3$~s for the control and the second one a timestep $0.03$~s.
The simulations run for $150$ s and were carried out with the \textsc{siconos} software
package \cite{acary2007introduction}\footnote{\url{http://siconos.gforge.inria.fr}}.
Figures were created using Matplotlib \cite{hunter2007matplotlib}.
The schemes presented in \cref{UeqZOHe,UeqZOHi,UeqZOHm} are used, as well as the two
schemes in \cref{eq:Use,eq:Usi} for the discretization of $u^s$, on the ZOH sampled-data version of
the system~\eqref{sys:2Dunstable}. In Subsection~\ref{sec:simNom}, the nominal system~\eqref{sys:2Dunstable} is
simulated and in Subsection~\ref{sec:pert} a matching perturbation is added. For each set of simulations, three types of
figure are shown. The first one
present an overview of the trajectories of the different closed-loop systems (like Fig.~\ref{fig:sim1} and~\ref{fig:sim2}).
The next one displays some details, around the origin (Fig.~\ref{fig:sim1_zoom}, \ref{fig:sim2_zoom} and~\ref{fig:sim3_zoom}).
Finally, we present plots of the different discontinuous inputs
(Fig.~\ref{fig:sim1_us}, \ref{fig:sim2_us} and~\ref{fig:sim3_us}). Markers are also added to help visualize the position
of the closed-loop system at some of the time instants $t_k$.

\subsection{Nominal case}\label{sec:simNom}
\begin{figure}[ht]
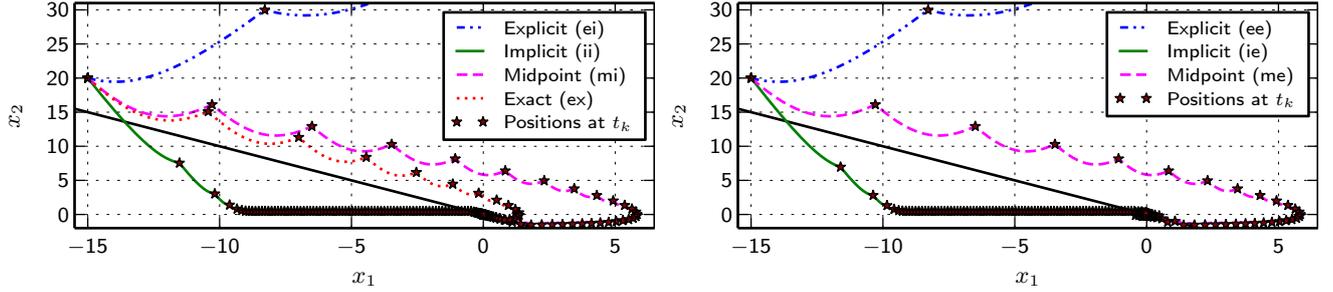

 \centering
 \begin{subfigure}[b]{\lenSubFig\linewidth}
   \centering
   \beginpgfgraphicnamed{2D-implicit-h-0.3}\input{pgf/2D-implicit-h-0.3.pgf}\endpgfgraphicnamed
   \caption{Implicit discretization of $u^s$. (ei) is for pair \eqref{UeqZOHe},
   \eqref{eq:Usi}; (ii) for \eqref{UeqZOHi}, \eqref{eq:Usi};
   (mi) for \eqref{UeqZOHm}, \eqref{eq:Usi}; (ex) for \eqref{eq:new_control}.}
   \label{fig:sim1:i}
 \end{subfigure}
 \begin{subfigure}[b]{\lenSubFig\linewidth}
  \centering
  \beginpgfgraphicnamed{2D-explicit-h-0.3}\input{pgf/2D-explicit-h-0.3.pgf}\endpgfgraphicnamed
  \caption{Explicit discretization of $u^s$. (ee) is for pair \eqref{UeqZOHe},
  \eqref{eq:Use}; (ie) for \eqref{UeqZOHi}, \eqref{eq:Use};
  (me) for \eqref{UeqZOHm}, \eqref{eq:Use}.}
  \label{fig:sim1:e}
 \end{subfigure}
 \caption{Simulations of system~\eqref{sys:2Dunstable} using different discretization methods,
 with $h = 0.3$ s and $\alpha=1$.}
 \label{fig:sim1}
\end{figure}
The trajectories for the different closed-loop systems are plotted in Fig.~\ref{fig:sim1}.
The motion in the reaching phase depends
only on the discretization method used for the equivalent control $u^{eq}$.
It is only near the sliding manifold that the discretization method of the discontinuous control $u^s$ plays a role.
If the explicit scheme in \eqref{UeqZOHe} is used for the discretization of $u^{eq}$, the system diverges
(Fig.~\ref{fig:sim1:i} and \ref{fig:sim1:e}, curves (ei) and (ee)). This sampling method can destabilize a system
which is stable in continuous time.
If the implicit scheme in \eqref{UeqZOHi} is used for the discretization of $u^{eq}$, then
the discretization error may not affect stability but it can induce some unexpected behaviour.
As we can see in Fig.~\ref{fig:sim1}, curves (ii) and (ie), the trajectories are crossing the sliding manifold.
This phenomenon can be explained by the following fact: let $\Delta_k$ be the discretization error on $u^{eq}$
at time
$t_k$. We have the recurrence equation $\bar{\sigma}_{k+1}=\bar{\sigma}_k + \Delta_k + CB^{*}\bar{u}^s_k$.
Let us consider the implicit discretization of $u^s$.
If $0<\bar{\sigma}_k<CB^{*}$, then the system should enter the discrete-time sliding phase.
However if $\Delta_k + \bar{\sigma}_k<-2CB^{*}$, then for any value of $\bar{u}^s_k$, $\bar{\sigma}_{k+1}<-CB^{*}$.
Hence, due to the discretization error, $\bar{u}^s$ fails to bring $\bar{\sigma}_{k+1}$ to $0$
and the trajectory of the system crosses the sliding manifold.
The same happens with the explicit discretization of $u^s$.
With the midpoint method in \eqref{UeqZOHm}, curves (mi) and (me), and with the new control scheme~\eqref{eq:new_control},
curve (ex), the system state reaches the sliding manifold.
\begin{figure}[ht]
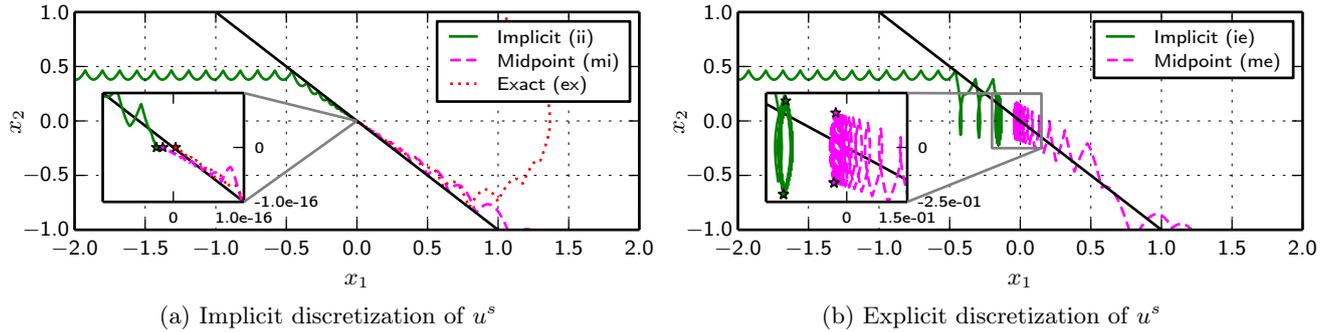

 \centering
 \begin{subfigure}[b]{\lenSubFig\linewidth}
  \centering
  \beginpgfgraphicnamed{2D-implicit-h-0.3_zoom}\input{pgf/2D-implicit-h-0.3_zoom.pgf}\endpgfgraphicnamed
  \caption{Implicit discretization of $u^s$}
  \label{fig:sim1_zoom:i}
 \end{subfigure}
 \begin{subfigure}[b]{\lenSubFig\linewidth}
  \centering
  \beginpgfgraphicnamed{2D-explicit-h-0.3_zoom}\input{pgf/2D-explicit-h-0.3_zoom.pgf}\endpgfgraphicnamed
  \caption{Explicit discretization of $u^s$}
  \label{fig:sim1_zoom:e}
 \end{subfigure}
 \caption{Detail of Fig.~\ref{fig:sim1}, $h=0.3$ s, $\alpha=1$.}
 \label{fig:sim1_zoom}
\end{figure}

Near the sliding manifold (Fig.~\ref{fig:sim1_zoom:i} and \ref{fig:sim1_zoom:e}),
the behaviour of the system is more sensitive to the discretization of $u^s$.
In the implicit case (method~\eqref{eq:Usi}, Fig.~\ref{fig:sim1_zoom:i}), in the discrete-time
sliding phase, $\bar{\sigma}_k$ is very close to $0$ ($\bar{\sigma}_k = 0$ with the exact method). In each case,
it converges to the origin (at the machine precision).
This is visible on the zoom box in Fig.~\ref{fig:sim1_zoom:i}, where markers indicate the state of the system 
at each time instant $t_k$, during the last second of each simulation.
When the explicit method~\eqref{eq:Use} is used, the system chatters around the sliding manifold, within
a neighborhood of order $h$ ($0.3$~s here), see Fig.~\ref{fig:sim1_zoom:e}.
\begin{figure}[ht]
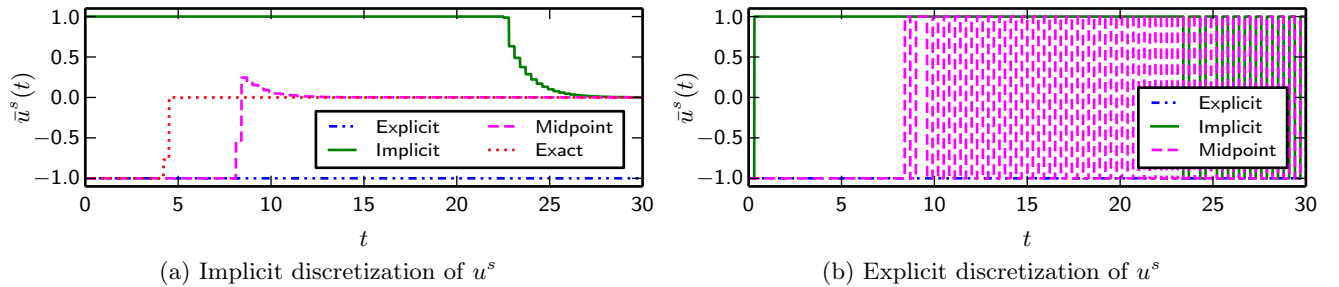

 \centering
 \begin{subfigure}[b]{\lenSubFig\linewidth}
  \centering
  \beginpgfgraphicnamed{2D-implicit-h-0.3_us}\input{pgf/2D-implicit-h-0.3_us.pgf}\endpgfgraphicnamed
  \caption{Implicit discretization of $u^s$}
  \label{fig:sim1_us:i}
 \end{subfigure}
 \begin{subfigure}[b]{\lenSubFig\linewidth}
  \centering
  \beginpgfgraphicnamed{2D-explicit-h-0.3_us}\input{pgf/2D-explicit-h-0.3_us.pgf}\endpgfgraphicnamed
  \caption{Explicit discretization of $u^s$}
  \label{fig:sim1_us:e}
 \end{subfigure}
 \caption{Evolution of $\bar{u}^s$ for different discretization methods, with $h=0.3$ s and $\alpha=1$.}
 \label{fig:sim1_us}
\end{figure}

In Fig.~\ref{fig:sim1_us:e}, the explicitly discretized discontinuous control
$\bar{u}^s$ takes its values in $\{-1, 1\}$ and starts at some point a limit cycle, as studied
in~\cite{galias2008analysis}. This cycle is also visible on the zoom box in Fig.~\ref{fig:sim1_zoom:e} with the help of
the markers. In Fig.~\ref{fig:sim1_us:i},
for each discretization of $u^{eq}$, $\bar{u}^s$ converges to $0$, which is the value that $u^s$ takes in the sliding phase.
In the implicit and midpoint cases, at the beginning of the discrete-time sliding phase, $\bar{u}^s$ takes
non zero values since there are discretization errors on $u^{eq}$. That is, if $\bar{\sigma}_k = 0$,
$\bar{\sigma}_{k+1} \neq 0$. The discontinuous control tries to bring $\bar{\sigma}_{k+1}$ to $0$ and
counteracts the error. As the state goes to the origin, the error converges to $0$.
These simulations illustrate the fact that error is smaller in the midpoint case than in the implicit case,
as shown in Lemma~\ref{lem:midpoint}. With the exact method of Section~\ref{sec:design},
$\bar{u}^s$ goes to $0$ after 1 timestep in the discrete-time sliding phase. In Fig.~\ref{fig:sim1_us:i}
and \ref{fig:sim1_us:e}, with the explicit discretization of $u^{eq}$, $\bar{u}^s$ takes always the same
value, since the closed-loop system moves away from the sliding manifold.
In terms of convergence to the sliding manifold, the first closed-loop system to enter the discrete-time sliding phase
is the exact method (Fig.~\ref{fig:sim1_us:i}), then the midpoint, finally the implicit method.
With the explicit method on $u^{eq}$, the system moves away from the sliding manifold and thus cannot enter the
discrete-time sliding phase.
\begin{figure}[ht]
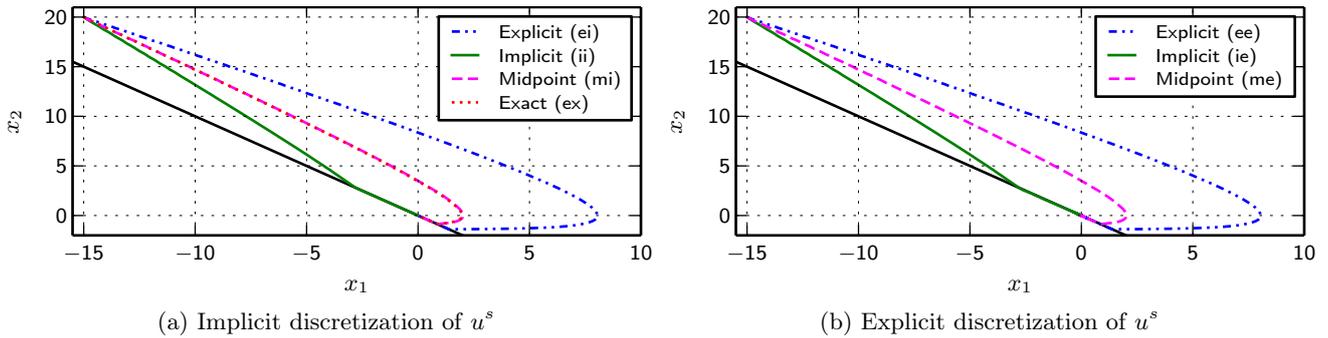

 \centering
 \begin{subfigure}[b]{\lenSubFig\linewidth}
  \centering
  \beginpgfgraphicnamed{2D-implicit-h-0.03}\input{pgf/2D-implicit-h-0.03.pgf}\endpgfgraphicnamed
  \caption{Implicit discretization of $u^s$}
  \label{fig:sim2:i}
 \end{subfigure}
 \begin{subfigure}[b]{\lenSubFig\linewidth}
  \centering
  \beginpgfgraphicnamed{2D-explicit-h-0.03}\input{pgf/2D-explicit-h-0.03.pgf}\endpgfgraphicnamed
  \caption{Explicit discretization of $u^s$}
  \label{fig:sim2:e}
 \end{subfigure}
 \caption{Simulations of system~\eqref{sys:2Dunstable} using different discretization methods, with $h = 0.03$ s and $\alpha=1$.}
 \label{fig:sim2}
\end{figure}
\begin{figure}[ht]
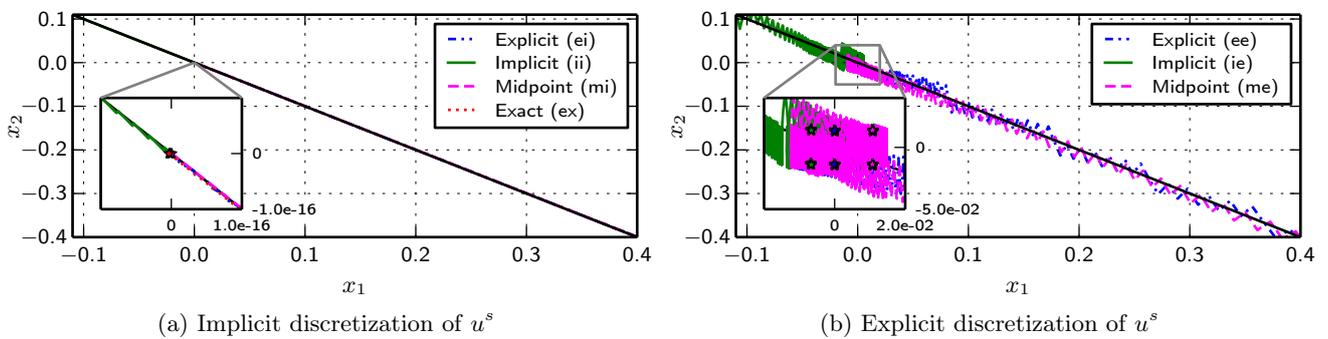

 \centering
 \begin{subfigure}[b]{\lenSubFig\linewidth}
  \centering
  \beginpgfgraphicnamed{2D-implicit-h-0.03_zoom}\input{pgf/2D-implicit-h-0.03_zoom.pgf}\endpgfgraphicnamed
  \caption{Implicit discretization of $u^s$}
  \label{fig:sim2_zoom:i}
 \end{subfigure}
 \begin{subfigure}[b]{\lenSubFig\linewidth}
  \centering
  \beginpgfgraphicnamed{2D-explicit-h-0.03_zoom}\input{pgf/2D-explicit-h-0.03_zoom.pgf}\endpgfgraphicnamed
  \caption{Explicit discretization of $u^s$}
  \label{fig:sim2_zoom:e}
 \end{subfigure}
 \caption{Detail of Fig.~\ref{fig:sim2}, $h=0.03$ s, $\alpha=1$.}
 \label{fig:sim2_zoom}
\end{figure}

The next set of simulations uses the same parameters as the previous one,
except for the timestep which is smaller: $h=0.03$~s.
In contrast with the results presented in Fig.~\ref{fig:sim1}, the closed-loop system is stable
in all cases, see Fig.~\ref{fig:sim2}.
As expected, the discretization error is smaller and no trajectory crosses the sliding manifold. 
It is not possible to distinguish the solutions associated with the midpoint from the one obtained with the exact
method in Fig.~\ref{fig:sim2:i}. In Fig.~\ref{fig:sim2_zoom:i} with the implicit discretization of $u^s$,
the states converge again to a very small ball near the origin.
In the explicit case, there is some numerical chattering, again with the same order of magnitude as the timestep
($h=0.03$~s, Fig.~\ref{fig:sim2_zoom:e}).
\begin{figure}[ht]
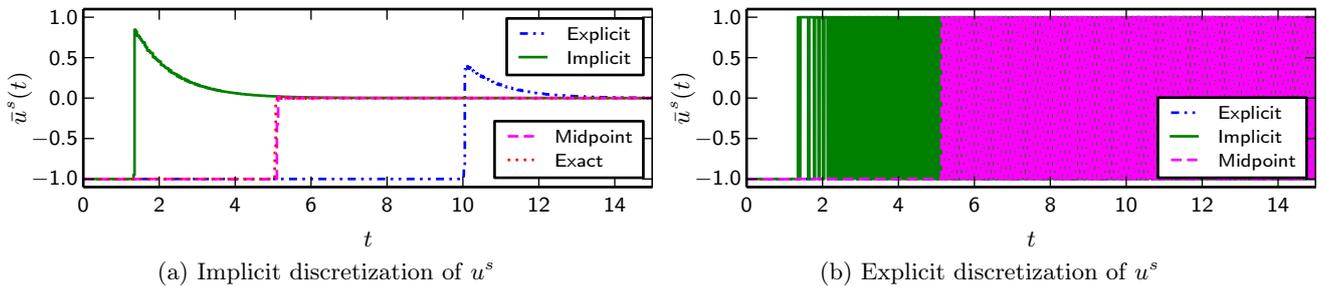

 \centering
 \begin{subfigure}[b]{\lenSubFig\linewidth}
  \centering
  \beginpgfgraphicnamed{2D-implicit-h-0.03_us}\input{pgf/2D-implicit-h-0.03_us.pgf}\endpgfgraphicnamed
  \caption{Implicit discretization of $u^s$}
  \label{fig:sim2_us:i}
 \end{subfigure}
 \begin{subfigure}[b]{\lenSubFig\linewidth}
  \centering
  \beginpgfgraphicnamed{2D-explicit-h-0.03_us}\input{pgf/2D-explicit-h-0.03_us.pgf}\endpgfgraphicnamed
  \caption{Explicit discretization of $u^s$}
  \label{fig:sim2_us:e}
 \end{subfigure}
 \caption{Evolution of $\bar{u}^s$ for different discretization methods, with $h = 0.03$ s and $\alpha=1$.}
 \label{fig:sim2_us}
\end{figure}
%
In Fig.~\ref{fig:sim2_us:i}, once in the discrete-time sliding phase, $\bar{u}^s$ counteracts the discretization error on $u^{eq}$,
which is smaller than in Fig.~\ref{fig:sim1_us:i}.
The discretization error for the midpoint discretization in \eqref{UeqZOHm} is much smaller, and
its curve overlaps completely with the one of the exact discretization method.

The results presented here bring into view the numerical chattering caused by an explicit discretization of $u^s$,
while the implicit method is free of it.
The importance of the discretization of $u^{eq}$ is also illustrated, with the explicit method leading to a diverging
system and the counterintuitive behaviour yielded by the implicit method. The exact method from Section~\ref{sec:design} produces
good results and in agreement with the theoretical results.
\subsection{Perturbed case}\label{sec:pert}

We now add a perturbation $\xi(t)$ in the system~\eqref{sys:2Dunstable}. In the next set of simulations, the perturbation is
$\xi(t) = 0.6\mathrm{ exp}(\min(6-t,0))\sin(2\pi t)$.
Note that for all $t$, $\|\xi(t)\|\leq0.6$. This particular $\xi$ has been chosen to highlight
that if the perturbation vanishes, with the implicit discretization in~\eqref{eq:Usi},
$\bar{u}^s$ goes to $0$, whereas in the explicit case~\eqref{eq:Use},
$\bar{u}^s$ continues to switch between $-1$ and $1$.
%
%
\begin{figure}[ht]
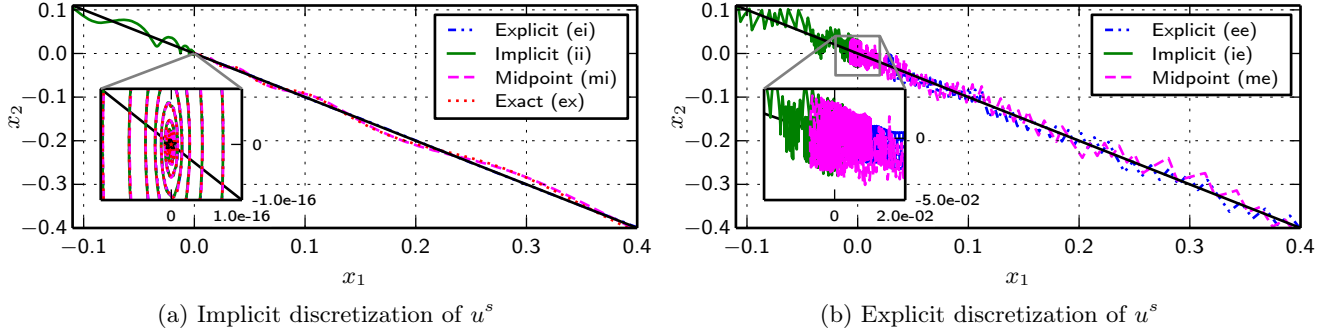

 \centering
 \begin{subfigure}[b]{\lenSubFig\linewidth}
   \centering
   \beginpgfgraphicnamed{2D-pert-implicit-h-0.03_zoom}\input{pgf/2D-pert-implicit-h-0.03_zoom.pgf}\endpgfgraphicnamed
   \caption{Implicit discretization of $u^s$}
   \label{fig:sim3_zoom:i}
 \end{subfigure}
 \begin{subfigure}[b]{\lenSubFig\linewidth}
  \centering
  \beginpgfgraphicnamed{2D-pert-explicit-h-0.03_zoom}\input{pgf/2D-pert-explicit-h-0.03_zoom.pgf}\endpgfgraphicnamed
  \caption{Explicit discretization of $u^s$}
  \label{fig:sim3_zoom:e}
 \end{subfigure}
 \caption{Simulations of system~\eqref{sys:2Dunstable} using different
 discretization methods for $u^{eq}$ and $h=0.03$~s (perturbed case).}
 \label{fig:sim3_zoom}
\end{figure}
With the implicit discretization of $u^s$ (Fig.~\ref{fig:sim3_zoom:i}) the closed-loop system
enters the discrete-time sliding phase at some point. Then $\bar{u}_k^s=-(CB^{*})^{-1}Cp_{k-1}$,
if some simple assumptions are satisfied, see the proof of Proposition~\ref{prop:pert} below.
It takes such value in order to counteract the effect of the perturbation during the elapsed time interval,
hence imitating the continuous-time Filippov solutions.
However the trajectories are now clearly only in a neighborhood of the sliding manifold.
Finally in each case in Fig.~\ref{fig:sim3_us:i}, $\bar{u}^s_k$ settles to $0$, as in continuous time. Indeed,
the perturbation $\xi$ used in this simulation goes to 0 exponentially fast at some point.
On the other hand, with an explicit discretization of $u^s$ (Fig.~\ref{fig:sim3_us:e}), it is much harder to witness the
influence of the perturbation on $\bar{u}^s$ since filtering would be necessary to see the effect.
The control input chattering is striking with the explicit discretization in Fig.~\ref{fig:sim1_us:e}, \ref{fig:sim2_us:e},
and \ref{fig:sim3_us:e}.
\begin{figure}[ht]
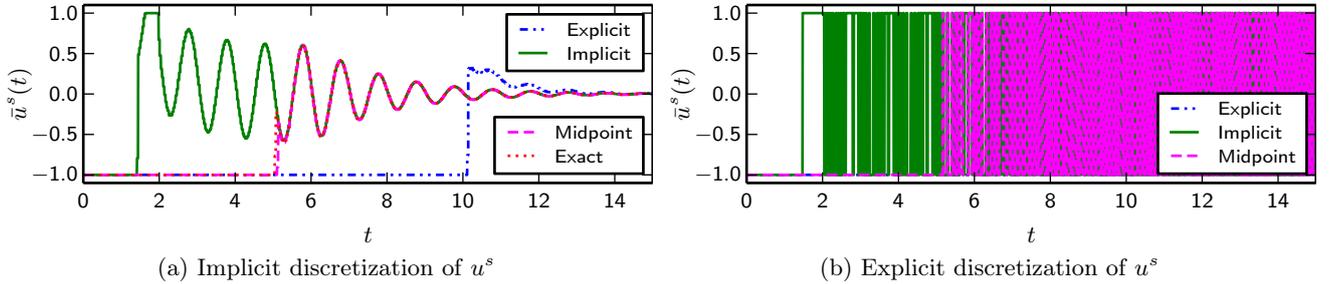

 \centering
 \begin{subfigure}[b]{\lenSubFig\linewidth}
  \centering
  \beginpgfgraphicnamed{2D-pert-implicit-h-0.03_us}\input{pgf/2D-pert-implicit-h-0.03_us.pgf}\endpgfgraphicnamed
  \caption{Implicit discretization of $u^s$}
  \label{fig:sim3_us:i}
 \end{subfigure}
 \begin{subfigure}[b]{\lenSubFig\linewidth}
  \centering
  \beginpgfgraphicnamed{2D-pert-explicit-h-0.03_us}\input{pgf/2D-pert-explicit-h-0.03_us.pgf}\endpgfgraphicnamed
  \caption{Explicit discretization of $u^s$}
  \label{fig:sim3_us:e}
 \end{subfigure}
 \caption{Evolution of $\bar{u}^s$ for different discretization methods for $u^{eq}$ and $u^s$, $h=0.03$~s. (perturbed case)}
 \label{fig:sim3_us}
\end{figure}
\begin{figure}[ht]
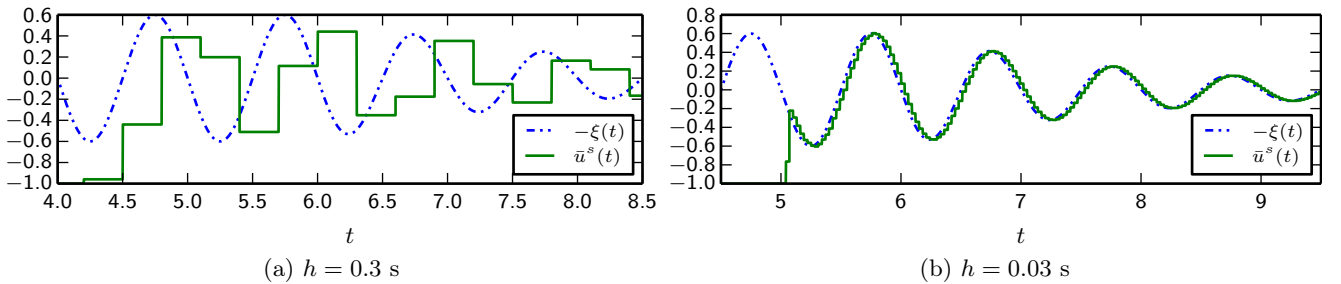

 \centering
 \begin{subfigure}[b]{\lenSubFig\linewidth}
  \centering
  \beginpgfgraphicnamed{2D-pert-correspondence-h-0.3_us}\input{pgf/2D-pert-correspondence-h-0.3_us.pgf}\endpgfgraphicnamed
  \caption{$h = 0.3$ s}
  \label{fig:sim3_corr1}
 \end{subfigure}
 \begin{subfigure}[b]{\lenSubFig\linewidth}
  \centering
  \beginpgfgraphicnamed{2D-pert-correspondence-h-0.03_us}\input{pgf/2D-pert-correspondence-h-0.03_us.pgf}\endpgfgraphicnamed
  \caption{$h = 0.03$ s}
  \label{fig:sim3_corr2}
 \end{subfigure}
 \caption{Evolution of $\bar{u}^s$ and the perturbation using the new control scheme for two different timesteps.}
 \label{fig:sim3_corr}
\end{figure}
In Fig.~\ref{fig:sim3_corr} we further illustrate the phenomenon in the implicit case:
$\bar{u}^s$ approximates $-\xi$ with a delay depending on $h$. This is close to the behaviour one expects from the
Filippov's framework in continuous-time. We shall investigate this phenomenon in Section~\ref{sec:stab_analysis}.

The following simulation results illustrate that with an implicit discretization of $u^{s}$,
the chattering in the discrete-time sliding phase is solely due to the perturbation.
The setup is the same as in Section~\ref{sec:simNom},
except that there is a perturbation $\xi(t) = 0.9\sin(t)$ and $\alpha$, the magnitude of the discontinuous control,
changes. For the present set of simulation, we use $\alpha=1, 3$ and $10$, values large enough to ensure that the
perturbation is always dominated by the control. On Fig.~\ref{fig:effect-alpha:i}, we cannot distinguish the three
trajectories in the discrete-time sliding phase, since even if $\bar{u}^s_k$ takes value in a larger set, the selected
value within $(\alpha,\alpha)$ does not change.
This is supported by the Fig.~\ref{fig:effect-alpha_us:i}: it is again not possible to differentiate
the values taken by the controllers.
On the contrary, in Fig.~\ref{fig:effect-alpha:e}, the three different trajectories are clearly
visible. Here we observe that the numerical chattering is dominant. Each time we increase $\alpha$, the amplitudes
of the oscillation around the manifold are getting bigger. We also observe that the precision of the system seems to
be affected by the magnitude of the control. The bigger it is, the farther the system oscillates from the origin.
\begin{figure}[ht]
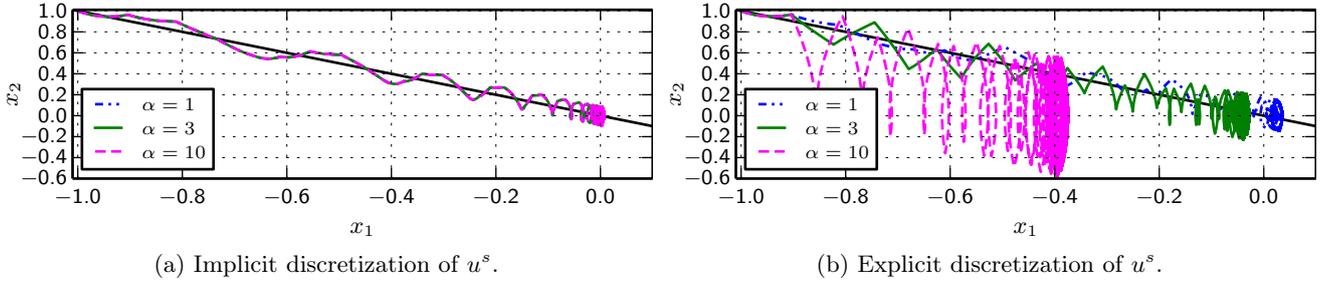

 \centering
 \begin{subfigure}[b]{\lenSubFig\linewidth}
   \centering
   \beginpgfgraphicnamed{2D-implicit-effect-alpha}\input{pgf/2D-implicit-effect-alpha.pgf}\endpgfgraphicnamed
   \caption{Implicit discretization of $u^s$.}
   \label{fig:effect-alpha:i}
 \end{subfigure}
 \begin{subfigure}[b]{\lenSubFig\linewidth}
  \centering
  \beginpgfgraphicnamed{2D-explicit-effect-alpha}\input{pgf/2D-explicit-effect-alpha.pgf}\endpgfgraphicnamed
  \caption{Explicit discretization of $u^s$.}
  \label{fig:effect-alpha:e}
 \end{subfigure}
 \caption{Simulations of system~\eqref{sys:2Dunstable} with a perturbation, using different values for $\alpha$
 and with $h = 0.1\second$.}
 \label{fig:effect-alpha}
\end{figure}
\begin{figure}[ht]
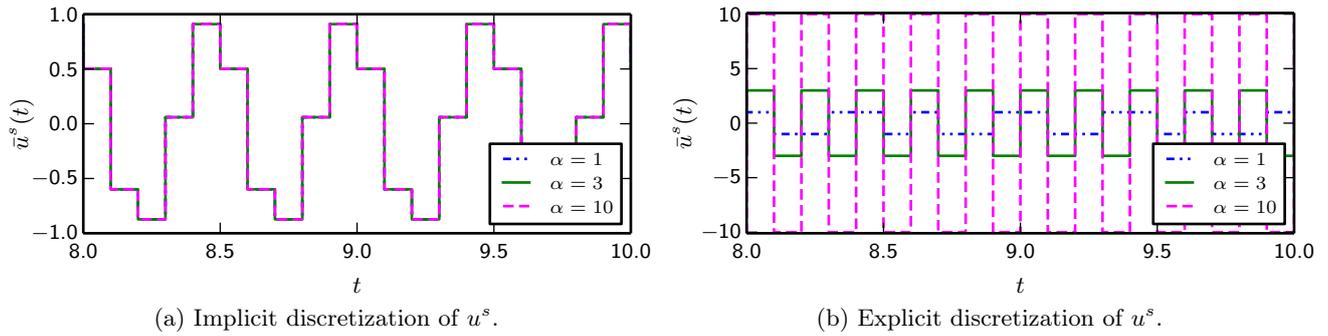

 \centering
 \begin{subfigure}[b]{\lenSubFig\linewidth}
   \centering
   \beginpgfgraphicnamed{2D-implicit-effect-alpha_us}\input{pgf/2D-implicit-effect-alpha_us.pgf}\endpgfgraphicnamed
   \caption{Implicit discretization of $u^s$.}
   \label{fig:effect-alpha_us:i}
 \end{subfigure}
 \begin{subfigure}[b]{\lenSubFig\linewidth}
  \centering
  \beginpgfgraphicnamed{2D-explicit-effect-alpha_us}\input{pgf/2D-explicit-effect-alpha_us.pgf}\endpgfgraphicnamed
  \caption{Explicit discretization of $u^s$.}
  \label{fig:effect-alpha_us:e}
 \end{subfigure}
 \caption{Evolution of $\bar{u}^s$, using different values for $\alpha$ and with $h = 0.1$~s.}
 \label{fig:effect-alpha_us}
\end{figure}

\section{Stability properties}\label{sec:stab_analysis}

\subsection{Nominal case}

In this subsection, the stability of the system~\eqref{DTMMsys} is analyzed. Using the equivalent control
proposed in Section~\ref{sec:design}, the obtained properties can be transposed to the original nominal
system~\eqref{linSyst}.
Note that the mapping $\Sgn(\cdot)$, as introduced in Definition~\ref{de:Sgn}, has the following properties:
\begin{align}
  \langle v_1 - v_2, x_1 - x_2\rangle &\geq 0, \;
  \forall v_{i}\in\Sgn(x_i), i=1, 2\label{MMprop1}\\
  0&\notin \Sgn(x),\;\forall x\neq0.\label{MMprop2}
\end{align}
Property \eqref{MMprop1} is known as the \emph{monotonicity} of the $\Sgn$ set-valued function.
The positive-definitiveness property of $CB^*$ is pivotal to the results presented in this section.
Even if it is not explicit with the current notations, $CB^*$ depends on the timestep $h$. The following
lemma gives some insight of when this condition is fulfilled.
\begin{lem}\label{lem:pos_def}
 Suppose that $CB$ is positive-definite. There exists an interval $I=[0, h^*]\subset\R_+$,
 $h^*>0$ such that if the timestep $h$ is chosen in $I$, then $CB^*/h$ is positive-definite (and so is $CB^*$ if $h\neq0$).
\end{lem}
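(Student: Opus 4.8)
The key fact is that $CB^* = C\Psi B$ with $\Psi = \sum_{l=0}^\infty \frac{A^l h^{l+1}}{(l+1)!} = hI + \frac{A h^2}{2} + \mathcal{O}(h^3)$, so that $CB^*/h = C\left(I + \frac{Ah}{2} + \mathcal{O}(h^2)\right)B = CB + \mathcal{O}(h)$. The plan is therefore to write $CB^*/h = CB + hR(h)$, where $R(h) \coloneqq C\left(\sum_{l=1}^\infty \frac{A^l h^{l-1}}{(l+1)!}\right)B$ is a matrix-valued function that is continuous (indeed analytic) on all of $\R_+$ and in particular bounded on any compact interval, say $\|R(h)\|\le \rho$ for $h\in[0,1]$. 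The symmetric part of $CB^*/h$ is then $\operatorname{sym}(CB) + h\operatorname{sym}(R(h))$, where $\operatorname{sym}(M)\coloneqq \tfrac12(M+M^T)$.

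Next I would invoke the hypothesis that $CB$ is positive-definite: this means $\operatorname{sym}(CB)$ has a smallest eigenvalue $\mu>0$, so that $x^T(CB)x \ge \mu\|x\|^2$ for all $x\in\R^p$. Then for any $h\in[0,1]$ and any $x\in\R^p$,
\begin{equation}
 x^T\left(\frac{CB^*}{h}\right)x = x^T(CB)x + h\,x^T R(h) x \ge \mu\|x\|^2 - h\rho\|x\|^2 = (\mu - h\rho)\|x\|^2.
\end{equation}
Choosing $h^* \coloneqq \min\{1, \mu/(2\rho)\}$ (or any positive number below $\mu/\rho$), we get that for all $h\in[0,h^*]$ the quantity $\mu - h\rho \ge \mu/2 > 0$, hence $CB^*/h$ is positive-definite. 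For $h\neq 0$ this immediately gives $x^T(CB^*)x = h\,x^T(CB^*/h)x > 0$, so $CB^*$ itself is positive-definite; at $h=0$ the claim is only about $CB^*/h$, understood in the limiting sense $\lim_{h\to 0}CB^*/h = CB$, which is positive-definite by assumption. This is the interval $I = [0,h^*]$ sought.

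I do not foresee a serious obstacle: the only mild point of care is the behaviour at $h=0$, where $CB^*=0$ and the statement must be read as a statement about $CB^*/h$ (consistent with the parenthetical "and so is $CB^*$ if $h\neq 0$" in the lemma); this is handled by noting $R(h)$ extends continuously to $h=0$ with $R(0)=\tfrac12 CAB$, so $CB^*/h \to CB$. One could alternatively argue purely topologically — positive-definiteness is an open condition on the space of matrices, $h\mapsto CB^*/h$ is continuous on $\R_+$ (after removing the singularity at $0$ by the power series), and it equals the positive-definite matrix $CB$ at $h=0$, hence stays in the open set of positive-definite matrices on a neighborhood $[0,h^*]$ — but the explicit eigenvalue estimate above is cleaner and yields a usable bound on $h^*$.
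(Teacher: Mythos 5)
Your proof is correct, and it starts from the same decomposition as the paper's — writing $CB^*/h$ as $CB$ plus an $\mathcal{O}(h)$ remainder — but it then takes a genuinely more elementary route. The paper passes to the symmetric parts $CB_s$ and $CB^*_s/h$, sets $\Delta \coloneqq CB^*_s/h - CB_s = \mathcal{O}(h)$, and invokes an eigenvalue-perturbation result for normal matrices (every eigenvalue of $CB^*_s/h$ lies within $\|\Delta\|_{2,2}$ of an eigenvalue of $CB_s$) together with continuity of the eigenvalues of $\Delta$ in $h$, concluding positivity of all eigenvalues of $CB^*_s/h$ once $\delta_{max}<\gamma$. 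You instead bound the quadratic form directly, $x^T(CB^*/h)x \geq (\mu - h\rho)\|x\|^2$ with $\rho$ a uniform bound on the analytic remainder $R(h)$ over a compact interval; this avoids any external spectral perturbation machinery, needs no appeal to eigenvalue continuity, and yields an explicit admissible threshold $h^* = \min\{1,\mu/(2\rho)\}$, which is a genuine bonus. What the paper's spectral phrasing buys in exchange is reusability: the quantities $\gamma$ and $\delta_{max}$ and the same perturbation corollary reappear in the proof of Proposition~\ref{prop:pert} to get the bound $\gamma-\delta_{max}^*\leq\beta/h$ and hence the uniform reaching-time estimate $T^*$, so the eigenvalue-level argument is not just a proof of this lemma but a tool used later. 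Your treatment of the removable singularity at $h=0$ (reading $CB^*/h$ there as its limit $CB$) is also fine and consistent with the parenthetical in the statement.
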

\begin{proof}
 Let $h>0$, $CB_s$ and $CB^*_s$ are the symmetric part of $CB$ and $CB^*$, respectively.
 Let $\Delta\coloneqq CB^*_s/h - CB_s =
 \sum_{l=1}^{\infty}\frac{CA^lB+B^T(A^l)^TC^T}{2(l+1)!}h^l = \mathcal{O}(h)$.
 Since $CB_s$ is symmetric, it is also normal. Hence we can apply Corollary~4.2.16, p.~405 in~\cite{hinrichsen2005mathematical},
 which yields that for any eigenvalue $\mu$ of $CB^*_s/h$, $\min_\lambda |\lambda-\mu|\leq\|\Delta\|_{2,2}$, with
 $\lambda$ an eigenvalue of $CB_s$ and $\|\cdot\|_{2,2}$ the spectral norm. By definition, $\Delta$ is a symmetric
 matrix with real entries. Hence $\|\Delta\|_{2,2}=\delta_{max}$, the largest module of any eigenvalue of $\Delta$.
 Let $\gamma>0$ be the smallest eigenvalue of $CB_s$. If $\delta_{max}<\gamma$, then every eigenvalue of $CB^*_s/h$ is positive
 and since $CB^*_s/h$ is by definition symmetric, $CB^*_s/h$ is positive definite.
 It is easy to see that $\Delta\to0$ as $h\to0$ and that $\Delta$ depends continuously on $h$. Therefore by
 Corollary~4.2.4, p.~399 in~\cite{hinrichsen2005mathematical}, the eigenvalues of $\Delta$ are continuous functions of
 $h$. Then it is always possible to find $h^*$ such that $\delta_{max}<\gamma$ for all $h<h^*$, which implies that
 $CB^*_s/h$ is positive-definite and finally $CB^*_s$ is positive definite.
\end{proof}
\begin{lem}\label{lem:cv1}
 If $CB^{*}$ is symmetric positive-definite, then the equilibrium state $\bar{\sigma}^* = 0$
 of \eqref{DTMMsys} is globally Lyapunov stable.
\end{lem}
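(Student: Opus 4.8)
The plan is to exhibit a quadratic Lyapunov function for the closed-loop sliding-variable dynamics \eqref{DTMMsys} adapted to the metric induced by $CB^{*}$. Since $CB^{*}$ is symmetric positive-definite it is in particular a $\mathbf{P}$-matrix (Definition~\ref{de:Pmatrix} and the lemma following it), so the generalized equation \eqref{DTMMsys}, having the same structure as the AVI~\eqref{eq:avi}, admits for each $\bar{\sigma}_k$ a unique solution $\bar{u}^s_k$ by Lemmas~\ref{lem:existence} and~\ref{lem:uniqueness}. Hence $\bar{\sigma}_{k+1}$ is a well-defined single-valued function of $\bar{\sigma}_k$ and the trajectory $\{\bar{\sigma}_k\}$ from any $\bar{\sigma}_0$ is well-defined; this is what I would check first.

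Next I would take $V(\sigma)=\sigma^{\top}(CB^{*})^{-1}\sigma$. Since $(CB^{*})^{-1}$ is symmetric positive-definite there are constants $0<c_1\le c_2$ with $c_1\|\sigma\|^2\le V(\sigma)\le c_2\|\sigma\|^2$, so $V$ is positive definite, radially unbounded and $V(0)=0$. The key computation is the increment of $V$ along a trajectory: substituting $\bar{\sigma}_k=\bar{\sigma}_{k+1}-CB^{*}\bar{u}^s_k$ from the first line of \eqref{DTMMsys}, expanding, and using that $(CB^{*})^{-1}$ is symmetric together with $(CB^{*})^{\top}(CB^{*})^{-1}(CB^{*})=CB^{*}$, one obtains
\[
 V(\bar{\sigma}_{k+1})-V(\bar{\sigma}_k)=2\langle \bar{\sigma}_{k+1},\bar{u}^s_k\rangle-\langle \bar{u}^s_k, CB^{*}\bar{u}^s_k\rangle .
\]
Both terms are non-positive: the second because $CB^{*}$ is positive-definite; the first because $\bar{u}^s_k=-\alpha v$ for some $v\in\Sgn(\bar{\sigma}_{k+1})$, so applying the monotonicity property~\eqref{MMprop1} to the pairs $(\bar{\sigma}_{k+1},v)$ and $(0,0)$ (note $0\in\Sgn(0)$) gives $\langle v,\bar{\sigma}_{k+1}\rangle\ge0$, hence $\langle\bar{\sigma}_{k+1},\bar{u}^s_k\rangle\le0$. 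Therefore $V(\bar{\sigma}_{k+1})\le V(\bar{\sigma}_k)$ for all $k$.

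Lyapunov stability then follows from the standard sublevel-set argument. Given $\varepsilon>0$, set $c\coloneqq c_1\varepsilon^2/2$ and $\delta\coloneqq\sqrt{c/c_2}$; if $\|\bar{\sigma}_0\|<\delta$ then $V(\bar{\sigma}_0)\le c_2\|\bar{\sigma}_0\|^2<c$, so by monotonicity $V(\bar{\sigma}_k)\le V(\bar{\sigma}_0)<c$ for every $k$, whence $\|\bar{\sigma}_k\|\le\sqrt{V(\bar{\sigma}_k)/c_1}<\varepsilon$. Since $\varepsilon$ is arbitrary and $\bar{\sigma}^*=0$ is an equilibrium, it is globally Lyapunov stable. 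The one point requiring care is the sign of the cross term $\langle\bar{\sigma}_{k+1},\bar{u}^s_k\rangle$: it is precisely the $(CB^{*})^{-1}$-weighting of $V$ (rather than the plain Euclidean norm) combined with the implicit evaluation of $\Sgn$ at $\bar{\sigma}_{k+1}$ that makes this term have the correct sign, and this is the main obstacle to a naive argument.
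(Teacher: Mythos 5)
Your proof is correct and follows essentially the same route as the paper: the same weighted quadratic Lyapunov function $V(\sigma)=\sigma^{\top}(CB^{*})^{-1}\sigma$, the same substitution $\bar{\sigma}_k=\bar{\sigma}_{k+1}-CB^{*}\bar{u}^s_k$ giving the increment $2\langle\bar{\sigma}_{k+1},\bar{u}^s_k\rangle-\langle\bar{u}^s_k,CB^{*}\bar{u}^s_k\rangle$, and the same use of the monotonicity property~\eqref{MMprop1} at the pair $(\bar{\sigma}_{k+1},0)$ to kill the cross term. The extra details you supply (well-posedness via the $\mathbf{P}$-matrix property and the explicit sublevel-set $\varepsilon$--$\delta$ argument) only flesh out steps the paper leaves implicit.
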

\begin{proof}
 Let $V(\bar{\sigma}_k) \coloneqq \bar{\sigma}_k^TP\bar{\sigma}_k$ with $P = \left(CB^{*}\right)^{-1}$,
 be a candidate Lyapunov function.
 Along the trajectories of the system~\eqref{DTMMsys}, one obtains:
 \begin{align}
  V(\bar{\sigma}_{k+1}) - V(\bar{\sigma}_k) &= \bar{\sigma}_{k+1}^TP\bar{\sigma}_{k+1} - \bar{\sigma}_k^TP\bar{\sigma}_k\notag\\
  &= \bar{\sigma}_{k+1}^TP\bar{\sigma}_{k+1} - (\bar{\sigma}_{k+1}
  - CB^{*}\bar{u}^s_{k})^TP(\bar{\sigma}_{k+1} - CB^{*}\bar{u}^s_{k})\notag\\
  &= -(\bar{u}^s_{k})^TCB^{*}\bar{u}^s_{k} + 2(\bar{u}^s_{k})^T\bar{\sigma}_{k+1}.\label{eq:lem:cv1}
 \end{align}
 Using \eqref{MMprop1} with $v_1 = \bar{u}^s_{k}$, $v_2 = 0$, $x_1 = \bar{\sigma}_{k+1}$, and $x_2 = 0$ yields
 $(\bar{u}^s_{k})^T\bar{\sigma}_{k+1}\leq0$. Since $CB^{*}$ is positive-definite,
 the first term is always nonpositive. This completes the proof.
\end{proof}
We can also use a non-quadratic Lyapunov function, inspired by the one presented in \cite{utkin1992sliding}.
As we shall see, it relaxes the symmetry condition on the matrix $CB^{*}$.
\begin{lem}\label{lem:cv2}
 If $CB^{*}$ is positive-definite, then the equilibrium state $\bar{\sigma}^* = 0$ of~\eqref{DTMMsys}
 is globally Lyapunov stable.
 \end{lem}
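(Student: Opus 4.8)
The plan is to replace the quadratic storage function of Lemma~\ref{lem:cv1} by a non-quadratic, polyhedral one, namely the $\ell^1$-norm $V(\bar\sigma)\coloneqq\|\bar\sigma\|_1=\sum_{i=1}^p|\bar\sigma_i|$. This $V$ is continuous, positive definite, radially unbounded, and its sublevel sets are bounded; crucially, since it is built from absolute values and not from a quadratic form $\bar\sigma^TP\bar\sigma$, the argument will only use positive-definiteness of $CB^{*}$ in the sense that $v^TCB^{*}v>0$ for every $v\neq0$, never symmetry of $CB^{*}$. This is exactly the Utkin-type construction alluded to just before the statement.

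Next I would compute the increment of $V$ along a trajectory of~\eqref{DTMMsys}. Well-posedness of the iteration (existence, and uniqueness since $CB^{*}$ is a $\mathbf{P}$-matrix) is inherited from the structurally identical system~\eqref{lcp:inexact} via Lemmas~\ref{lem:existence} and~\ref{lem:uniqueness}. Writing the discontinuous input as $\bar u^s_k=-\alpha v_k$ with $v_k\in\Sgn(\bar\sigma_{k+1})$, the first line of~\eqref{DTMMsys} reads $\bar\sigma_k=\bar\sigma_{k+1}+\alpha\,CB^{*}v_k$. Two elementary facts drive the estimate: first, since $v_k\in\Sgn(\bar\sigma_{k+1})$ componentwise, $\langle v_k,\bar\sigma_{k+1}\rangle=\sum_i v_{k,i}\bar\sigma_{k+1,i}=\sum_i|\bar\sigma_{k+1,i}|=V(\bar\sigma_{k+1})$; second, every component of $v_k$ lies in $[-1,1]$, so $\|v_k\|_\infty\leq1$ and by $\ell^1$--$\ell^\infty$ duality $\langle v_k,\bar\sigma_k\rangle\leq\|v_k\|_\infty\|\bar\sigma_k\|_1\leq V(\bar\sigma_k)$. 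Combining,
\begin{equation*}
 V(\bar\sigma_k)\geq\langle v_k,\bar\sigma_k\rangle=\langle v_k,\bar\sigma_{k+1}\rangle+\alpha\,v_k^TCB^{*}v_k=V(\bar\sigma_{k+1})+\alpha\,v_k^TCB^{*}v_k,
\end{equation*}
hence $V(\bar\sigma_{k+1})-V(\bar\sigma_k)\leq-\alpha\,v_k^TCB^{*}v_k\leq0$, with equality only when $v_k=0$, which by~\eqref{MMprop2} forces $\bar\sigma_{k+1}=0$ (and then $\bar\sigma_k=0$).

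Finally I would invoke the standard discrete-time Lyapunov stability theorem: $V$ is continuous, positive definite, radially unbounded, and non-increasing along closed-loop trajectories, so the sublevel sets $\{\bar\sigma:V(\bar\sigma)\leq c\}$ are positively invariant, and given $\varepsilon>0$ one chooses $\delta>0$ so that $\|\bar\sigma_0\|<\delta$ puts $\bar\sigma_0$ inside $\{V\leq c\}$ with $c<\min_{\|\bar\sigma\|=\varepsilon}V(\bar\sigma)$, whence $\|\bar\sigma_k\|<\varepsilon$ for all $k$; this yields global Lyapunov stability of $\bar\sigma^{*}=0$. There is no serious computational obstacle here; the one point worth stressing is conceptual: the selection $v_k$ is slaved to the \emph{new} iterate $\bar\sigma_{k+1}$ (this is precisely the implicit discretization), which is what makes $\langle v_k,\bar\sigma_{k+1}\rangle$ equal $V(\bar\sigma_{k+1})$ exactly and lets the comparison telescope. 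With an explicit scheme $\bar u^s_k\in-\alpha\Sgn(\bar\sigma_k)$ this identity is lost, and one would instead pick up the term $\langle v_k,\bar\sigma_{k+1}\rangle$ with $v_k\in\Sgn(\bar\sigma_k)$, which need not be nonpositive — consistent with the chattering observed for the explicit controller in Section~\ref{sec:sim}.
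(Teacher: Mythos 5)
Your proposal is correct and is essentially the paper's own argument: the paper's Lyapunov function $V(\bar{\sigma}_k)=-(\bar{u}^s_{k-1})^T\bar{\sigma}_k$ is precisely $\alpha\|\bar{\sigma}_k\|_1$, and its decrement is controlled by the same two ingredients you use, namely the positive-definite form $v^TCB^{*}v$ evaluated at the implicit selection and the monotonicity of $\Sgn$ (your Hölder step $\langle v_k,\bar{\sigma}_k\rangle\leq\|\bar{\sigma}_k\|_1$ is exactly the paper's use of \eqref{MMprop1} with $v_k\in\Sgn(0)$). Your derivation is a slightly more direct rewriting of the same proof, with the final standard Lyapunov argument spelled out explicitly.
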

\begin{proof}
 Let $V(\bar{\sigma}_k) \coloneqq -(\bar{u}^s_{k-1})^T\bar{\sigma}_k$ be the candidate Lyapunov function,
 and $\bar{u}^s_{k-1}\in -\alpha\Sgn(\bar{\sigma}_k)$.
 The function $V$ is positive definite, radially unbounded, and decrescent since
 $-(\bar{u}^s_{k-1})^T\bar{\sigma}_k = \alpha\|\bar{\sigma}_k\|_1^2$ and $\alpha>0$.
 Let us study the variations of $V$:
 \begin{align}
  V(\bar{\sigma}_{k+1}) - V(\bar{\sigma}_k) &= -(\bar{u}^s_{k})^T\bar{\sigma}_{k+1} + (\bar{u}^s_{k-1})^T\bar{\sigma}_k\notag\\
  &= -(\bar{u}^s_{k})^T(\bar{\sigma}_k+CB^{*}\bar{u}^s_{k})+(\bar{u}^s_{k-1})^T\bar{\sigma}_k\notag\\
  &= -(\bar{u}^s_{k})^TCB^{*}\bar{u}^s_{k} + \langle\bar{u}^s_{k-1}-\bar{u}^s_{k}, \bar{\sigma}_k\rangle.
  \label{eq:lem:cv2}
 \end{align}
 The first term is always nonpositive with the hypothesis on $CB^{*}$. For the second term,
 if $\bar{u}^s_{k}\in-\alpha\Sgn(\bar{\sigma}_{k+1})$,
 then it also belongs to $-\alpha\Sgn(0)$. Hence, by \eqref{MMprop1}, the second term is always nonpositive.
This completes the proof.
\end{proof}
\begin{prop}\label{prop:ft}
 If the hypothesis of either Lemma~\ref{lem:cv1} or \ref{lem:cv2} are satisfied, then
 the fixed point $(\bar{\sigma}, \bar{u}) = (0,0)$ of \eqref{DTMMsys} is globally finite-time Lyapunov stable.
\end{prop}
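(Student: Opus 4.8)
The plan is to combine the global Lyapunov stability already obtained in Lemma~\ref{lem:cv1} (or Lemma~\ref{lem:cv2}) with a finite-time reaching argument: I will show that the corresponding Lyapunov function $V$ strictly decreases by at least a fixed positive amount at every step whose \emph{next} iterate is nonzero, which forces $\bar\sigma_k$ to reach $0$ after finitely many steps, and then that $0$ is absorbing. Throughout, $CB^*$ is positive-definite under either hypothesis, so its symmetric part $CB^*_s$ is positive-definite as well; set $\gamma \coloneqq \lambda_{\min}(CB^*_s) > 0$, so that $z^T CB^* z = z^T CB^*_s z \geq \gamma\|z\|^2$ for all $z\in\R^p$.

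The key observation is that if $\bar\sigma_{k+1}\neq 0$, then some component $(\bar\sigma_{k+1})_j$ is nonzero, and since $\bar u^s_k \in -\alpha\Sgn(\bar\sigma_{k+1})$ this forces $|(\bar u^s_k)_j| = \alpha$, hence $\|\bar u^s_k\| \geq \alpha$. Plugging this into the one-step variation formula \eqref{eq:lem:cv1} for the quadratic $V$ (resp. \eqref{eq:lem:cv2} for the non-quadratic one), whose cross term was already shown to be nonpositive, I obtain
\[
  V(\bar\sigma_{k+1}) - V(\bar\sigma_k) \;\leq\; -(\bar u^s_k)^T CB^* \bar u^s_k \;\leq\; -\gamma\|\bar u^s_k\|^2 \;\leq\; -\gamma\alpha^2 \eqqcolon -c \;<\; 0 .
\]
Since $V \geq 0$ and, whenever $\bar\sigma$ has not yet reached $0$, the indices with $\bar\sigma_k\neq0$ form an initial segment (this uses that $0$ is absorbing, proved next), telescoping this inequality shows $\bar\sigma_k$ cannot remain nonzero for more than $V(\bar\sigma_0)/c + 1$ steps. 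Let $k_0 = k_0(\bar\sigma_0)$ be the first index with $\bar\sigma_{k_0}=0$; then $k_0 \leq V(\bar\sigma_0)/c + 1$, a finite reaching time depending only on $\bar\sigma_0$.

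To see that the origin is absorbing, note that when $\bar\sigma_{k_0}=0$ the generalized equation \eqref{DTMMsys} reads $0 \in CB^*\bar u^s_{k_0} + N_{[-\alpha,\alpha]^p}(\bar u^s_{k_0})$, which is solved by $\bar u^s_{k_0}=0$; since $CB^*$ is a $\mathbf{P}$-matrix the AVI solution is unique (Lemma~\ref{lem:uniqueness}), so $\bar u^s_{k_0}=0$ and hence $\bar\sigma_{k_0+1}=0$, and by induction $(\bar\sigma_k,\bar u^s_k)=(0,0)$ for all $k\geq k_0$. Together with the global Lyapunov stability of $\bar\sigma^*=0$ from Lemma~\ref{lem:cv1}/\ref{lem:cv2}, and the fact that the settling-time estimate $k_0\leq V(\bar\sigma_0)/c+1$ tends to $0$ as $\bar\sigma_0\to0$ by continuity of $V$ at $0$, this gives the global finite-time Lyapunov stability of $(\bar\sigma,\bar u)=(0,0)$.

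The point I would be most careful about is that the one-step decrease of $V$ is keyed to $\bar\sigma_{k+1}$, not $\bar\sigma_k$: the bound $\|\bar u^s_k\|\geq\alpha$ requires $\bar\sigma_{k+1}\neq0$, so the fact that $0$ is absorbing must be invoked first to guarantee the nonzero indices form an initial segment before telescoping. A secondary, minor subtlety is that $\|\bar u^s_k\|$ is bounded only from below; but the term it must dominate in the variation formula — the cross term — is already nonpositive, so no upper bound is needed, and the same reasoning covers both the symmetric and the non-symmetric case since only positive-definiteness of $CB^*$ and the inclusion $\bar u^s_k\in-\alpha\Sgn(\bar\sigma_{k+1})$ are used.
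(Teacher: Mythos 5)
Your proof is correct and follows essentially the same route as the paper: the same per-step decrease $V(\bar\sigma_{k+1})-V(\bar\sigma_k)\leq -\beta\alpha^2$ (using $\|\bar u^s_k\|\geq\alpha$ whenever $\bar\sigma_{k+1}\neq0$ and nonpositivity of the cross term), followed by telescoping to bound the number of nonzero steps by $V(\bar\sigma_0)/(\beta\alpha^2)$. The only difference is that you spell out, via the AVI uniqueness argument, why $\bar\sigma=0$ is absorbing, a step the paper's proof asserts without detail.
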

\begin{proof}
 In each case, the difference $V(\bar{\sigma}_{k+1}) - V(\bar{\sigma}_k)$ consists of $-(\bar{u}^s_{k})^TCB^{*}\bar{u}^s_{k}$
 plus a nonpositive term. Since $CB^{*}$ is positive-definite, it holds that
 $-(\bar{u}^s_{k})^TCB^{*}\bar{u}^s_{k}\leq-\beta\|\bar{u}^s_{k}\|^2$, with $\beta>0$ the smallest eigenvalue of $CB_s^{*}$. Note that if $\bar{\sigma}_{k+1}\neq0$,
 then $\|\bar{u}^s_{k}\| \geq \alpha$ and $V(\bar{\sigma}_{k+1}) - V(\bar{\sigma}_k) \leq -\alpha^2\beta$. Iterating, one obtains
 $V(\bar{\sigma}_{k+1}) - V(\bar{\sigma}_0) \leq -k\alpha^2\beta$. Let $k_0\coloneqq \lceil V(\bar{\sigma}_0)/\beta\alpha^2\rceil$.
 Suppose $V(\bar{\sigma}_{k_0+1})\neq0$.
 Then $V(\bar{\sigma}_{k_0+1})-V(\bar{\sigma}_0) \leq -k_0\alpha^2\beta \leq -V(\bar{\sigma}_0)$.
 This yields $V(\bar{\sigma}_{k_0+1})\leq0$, which implies $V(\bar{\sigma}_{k_0+1})=0$.
 Then $\bar{\sigma}_{k_0+1} = 0$ and $\bar{\sigma}_{k}=0$ for all $k>k_0$.
\end{proof}

\subsection{Perturbed case}\label{sec:stab_analysis:pert}

Let us now consider the case with perturbation. The evolution of the sliding variable $\bar{\sigma}$ is
governed by 
\begin{gather}
 \bar{\sigma}_{k+1} = \bar{\sigma}_k + CB^{*}\bar{u}^s_k + Cp_k,\label{eq:dt_sys_pert}
\end{gather}
where $p_k\coloneqq \int_{t_k}^{t_{k+1}}\!e^{A(t_{k+1}-\tau)}B\xi(\tau)\mathrm{d}\tau$ and $\bar{u}^s_k$ is the unique solution of the generalized equation~\eqref{lcp:inexact}.
Although the system will never reach and stay on the sliding manifold as in the continuous-time case,
it enters the discrete-time sliding phase as stated in Definition~\ref{de:dtSlidingPhase}
 and stays in it. Let us first present a technical lemma.
\begin{lem}\label{lem:tech_matrix_estimation}
 Let $M\in\R^{n\times n}$ and $M_s\coloneqq 1/2(M+M^T)$. Suppose $M$ is positive-definite.
 Let $\beta>0$ be the smallest eigenvalue of $M_s$. Then for all $x\in\R^n$, $\|M^{-1}x\|\leq\beta^{-1}\|x\|$.
\end{lem}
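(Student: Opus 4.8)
The plan is to substitute $y \coloneqq M^{-1}x$ and reduce the statement to the lower bound $\|My\| \geq \beta\|y\|$ for every $y \in \R^n$. First I would observe that positive-definiteness of $M$ (in the sense used throughout the paper, $\langle Mv, v\rangle > 0$ for all $v \neq 0$, equivalently $M_s$ symmetric positive-definite) already guarantees that $M$ is invertible: if $Mv = 0$ for some $v \neq 0$ then $\langle Mv, v\rangle = 0$, a contradiction. Hence $M^{-1}$ is well-defined and the substitution $y = M^{-1}x$, $x = My$, is legitimate; moreover $M_s$ is a genuine symmetric matrix, so its eigenvalues are real and, by hypothesis, all $\geq \beta > 0$.

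The key step is the Rayleigh-quotient estimate. For any $y \in \R^n$ one has
\[
 \langle My, y\rangle = \tfrac12\bigl(\langle My, y\rangle + \langle y, My\rangle\bigr) = \langle M_s y, y\rangle \geq \beta\|y\|^2,
\]
where the last inequality is the standard lower bound on the quadratic form of a symmetric matrix by its smallest eigenvalue. Combining this with the Cauchy--Schwarz inequality $\langle My, y\rangle \leq \|My\|\,\|y\|$ gives $\beta\|y\|^2 \leq \|My\|\,\|y\|$. For $y \neq 0$ we divide by $\|y\|$ to obtain $\|My\| \geq \beta\|y\|$; the case $y = 0$ is trivial. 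Substituting back $y = M^{-1}x$ (so $My = x$) yields $\|x\| \geq \beta\|M^{-1}x\|$, i.e. $\|M^{-1}x\| \leq \beta^{-1}\|x\|$, as claimed.

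I do not anticipate any real obstacle: the argument is a two-line Rayleigh-quotient plus Cauchy--Schwarz estimate. The only point requiring a little care is to make explicit that ``positive-definite'' here means $M_s \succ 0$ (consistent with its use in, e.g., the lemma relating positive-definite matrices to $\mathbf{P}$-matrices), since it is precisely this that makes $\beta > 0$ meaningful and the division by $\|y\|$ licit.
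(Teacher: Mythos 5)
Your proof is correct, but it follows a different route from the paper's. The paper argues through singular values: it invokes \cite[Corollary~3.1.5, p.~151]{horn1991topics} for the bound $\nu_{\min}(M)\geq\beta$ (smallest singular value of $M$ dominated from below by the smallest eigenvalue of $M_s$), then \cite[Fact~6.3.21, p.~233]{bernstein2005matrix} for $\nu_{\max}(M^{-1})=\nu_{\min}(M)^{-1}$, and concludes via the spectral norm, $\|M^{-1}x\|\leq\|M^{-1}\|\,\|x\|=\nu_{\max}(M^{-1})\|x\|\leq\beta^{-1}\|x\|$. You instead prove the underlying inequality from scratch: setting $y=M^{-1}x$, the identity $\langle My,y\rangle=\langle M_s y,y\rangle\geq\beta\|y\|^2$ (Rayleigh quotient for the symmetric part) combined with Cauchy--Schwarz gives $\|My\|\geq\beta\|y\|$, which is exactly the content of the Horn--Johnson fact the paper cites, and the substitution $My=x$ finishes the argument. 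What your approach buys is self-containedness: a two-line elementary estimate with no external references, and it makes transparent where positive-definiteness of the non-symmetric $M$ (i.e.\ $M_s\succ0$) enters, including invertibility. What the paper's approach buys is brevity at the cost of two citations, and it places the lemma in the standard singular-value/operator-norm framework, which is the language used again later (e.g.\ the spectral norm $\|\cdot\|_{2,2}$ in Lemma~\ref{lem:pos_def} and the remark after Proposition~\ref{prop:pert}). Your closing caveat about the meaning of ``positive-definite'' is apt and consistent with how the paper uses the notion for non-symmetric matrices such as $CB^*$.
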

\begin{proof}
 $M^{-1}$ exists since $M_s$ is positive-definite. Let $\nu_{min}$ (resp. $\nu_{max}$) be the smallest (resp. largest) singular
 values of $M$ (resp. $M^{-1}$). Two relations hold: $\nu_{max}=\nu_{min}^{-1}$ \cite[Fact~6.3.21, p.233]{bernstein2005matrix} and
 $\nu_{min}\geq\beta>0$ \cite[Corollary~3.1.5,~p.151]{horn1991topics}. Then using the spectral norm definition,
 $\|M^{-1}x\|\leq\|M^{-1}\|\|x\|=\nu_{max}\|x\|\leq\beta^{-1}\|x\|$.
\end{proof}
From now on, let $CB^{*}_s \coloneqq 1/2(CB^{*}+(CB^{*})^T)$ and let $\beta$ be its smallest eigenvalue.
\begin{prop}\label{prop:pert}
 Suppose that $CB^{*}$ is positive-definite.
 If $\alpha>0$ is such that for all $k\in\N$ $\|Cp_k\|<\alpha\beta$,
 then the perturbed closed-loop system~\eqref{eq:dt_sys_pert}-\eqref{lcp:inexact} will enter
 the discrete-time sliding phase in finite time and stay in it.
 Furthermore if $h\in[0,h^*]$, as defined in Lemma~\ref{lem:pos_def}, then there exists an upper bound $T^*$ on
 the duration of the reaching phase.
\end{prop}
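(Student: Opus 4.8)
The plan is to reproduce, for the perturbed system, the finite-time argument of Proposition~\ref{prop:ft}, using the non-quadratic Lyapunov function $V(\bar\sigma_k) = -(\bar u^s_{k-1})^T\bar\sigma_k = \alpha\|\bar\sigma_k\|_1^2$ from Lemma~\ref{lem:cv2}, but now tracking the extra term produced by $Cp_k$. First I would write the one-step difference along \eqref{eq:dt_sys_pert}: since $\bar\sigma_{k+1} = \bar\sigma_k + CB^{*}\bar u^s_k + Cp_k$ and $\bar u^s_k\in-\alpha\Sgn(\widetilde\sigma_{k+1})$ with $\widetilde\sigma_{k+1} = \bar\sigma_k + CB^{*}\bar u^s_k$, we get
\begin{align}
V(\bar\sigma_{k+1}) - V(\bar\sigma_k) &= -(\bar u^s_k)^T\bar\sigma_{k+1} + (\bar u^s_{k-1})^T\bar\sigma_k\notag\\
&= -(\bar u^s_k)^TCB^{*}\bar u^s_k - (\bar u^s_k)^TCp_k + \langle \bar u^s_{k-1}-\bar u^s_k,\bar\sigma_k\rangle.\notag
\end{align}
The last bracket is $\le 0$ by monotonicity \eqref{MMprop1} exactly as in Lemma~\ref{lem:cv2} (here one uses $\bar u^s_{k-1}\in-\alpha\Sgn(\bar\sigma_k)$ and $\bar u^s_k\in-\alpha\Sgn(\widetilde\sigma_{k+1})\subseteq-\alpha\Sgn(0)$ whenever we are not yet in the sliding phase, or else we are already done). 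The quadratic term is bounded by $-\beta\|\bar u^s_k\|^2$ with $\beta$ the smallest eigenvalue of $CB^{*}_s$, and the cross term by $\|\bar u^s_k\|\,\|Cp_k\|$ via Cauchy--Schwarz. So $V(\bar\sigma_{k+1}) - V(\bar\sigma_k) \le -\|\bar u^s_k\|\big(\beta\|\bar u^s_k\| - \|Cp_k\|\big)$.

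Next I would argue that as long as the system is \emph{not} in the discrete-time sliding phase, i.e. $\|\bar u^s_k\|_\infty = \alpha$ (some component of $\bar u^s_k$ saturates), one has $\|\bar u^s_k\|\ge\alpha$, so the bracket is at least $\beta\alpha - \|Cp_k\| > 0$ by the standing hypothesis $\|Cp_k\|<\alpha\beta$. Hence $V$ strictly decreases by at least $\alpha(\beta\alpha-\sup_k\|Cp_k\|) =: \eta > 0$ at every step outside the sliding phase. Since $V\ge 0$, after at most $k_0 := \lceil V(\bar\sigma_0)/\eta\rceil$ such steps the system must enter the discrete-time sliding phase. For the ``stays in it'' part: once $\bar u^s_k\in(-\alpha,\alpha)^p$, the generalized equation \eqref{lcp:inexact} forces $\widetilde\sigma_{k+1}=0$, hence $\bar\sigma_{k+1} = Cp_k$ and then $\widetilde\sigma_{k+2} = Cp_k + CB^{*}\bar u^s_{k+1}=0$, giving $\bar u^s_{k+1} = -(CB^{*})^{-1}Cp_k$; by Lemma~\ref{lem:tech_matrix_estimation}, $\|\bar u^s_{k+1}\|\le\beta^{-1}\|Cp_k\| < \alpha$, so $\bar u^s_{k+1}\in(-\alpha,\alpha)^p$ and the system remains in the sliding phase. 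An induction on $k$ then propagates this forever.

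For the final claim — a uniform bound $T^*$ on the reaching-phase duration when $h\in[0,h^*]$ — I would bound the number of steps $k_0$ in a way independent of $h$ and then multiply by $h$. Here is where the main work lies: $V(\bar\sigma_0) = \alpha\|\bar\sigma_0\|_1^2$ is fixed, but $\eta = \alpha(\beta\alpha - \sup_k\|Cp_k\|)$ has $\beta = \beta(h)\to$ (smallest eigenvalue of $CB_s\cdot 0$)$=0$ as $h\to 0$, and $\sup_k\|Cp_k\| = \mathcal{O}(h)$; both vanish, so one needs the right rates. Writing $CB^{*} = \Psi B \approx hCB$ for small $h$ one gets $\beta(h) = h\beta_0 + o(h)$ with $\beta_0>0$ the smallest eigenvalue of $(CB)_s$, while $\|Cp_k\|\le h\|CB\xi\|_{\infty} + o(h) =: h\rho + o(h)$; thus $\eta = \alpha h(\alpha\beta_0 - \rho) + o(h)$, so the number of reaching steps is $k_0 = \mathcal{O}(1/h)$ and the elapsed time $t_{k_0} - t_0 = h k_0$ is bounded by a constant $T^*$ depending only on $\alpha$, $\|\bar\sigma_0\|_1$, $\beta_0$, $\rho$ (and not on $h$), valid for $h$ small enough, in particular on $[0,h^*]$ after possibly shrinking $h^*$. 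The main obstacle is making these asymptotic estimates uniform — ensuring the $o(h)$ remainders in both $\beta(h)$ and $\|Cp_k\|$ are controlled by a single threshold, which is exactly what Lemma~\ref{lem:pos_def} and the $\mathcal{O}(h^2)$ bound on $p_k$ (stated after \eqref{eq:ZOHp}) are there to supply, and keeping the hypothesis $\|Cp_k\|<\alpha\beta$ compatible with shrinking $h$.
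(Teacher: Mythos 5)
Your reaching-phase and invariance arguments coincide with the paper's own proof: same Lyapunov function $V(\bar{\sigma}_k)=-(\bar{u}^s_{k-1})^T\bar{\sigma}_k$, same decomposition of the one-step difference into $-(\bar{u}^s_k)^TCB^{*}\bar{u}^s_k$, the monotonicity term, and the Cauchy--Schwarz-bounded perturbation term $-(\bar{u}^s_k)^TCp_k$, and the same induction for staying in the sliding phase ($\widetilde{\sigma}_{k+1}=0$, $\bar{\sigma}_{k+1}=Cp_k$, $\bar{u}^s_{k+1}=-(CB^{*})^{-1}Cp_k$ with $\|\bar{u}^s_{k+1}\|\leq\beta^{-1}\|Cp_k\|<\alpha$ via Lemma~\ref{lem:tech_matrix_estimation}; the paper additionally invokes Lemma~\ref{lem:uniqueness} to make sure this is \emph{the} solution selected, which you should state since otherwise another saturated solution could in principle be picked). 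Up to that point the proposal is correct and essentially identical to the paper.

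The gap is in the final claim, the uniform bound $T^*$ on the reaching time for $h\in(0,h^*]$. You sketch it by leading-order asymptotics, $\beta(h)=h\beta_0+o(h)$ and $\|Cp_k\|\leq h\rho+o(h)$, and you yourself flag the uniformity of the remainders as "the main obstacle" and propose "possibly shrinking $h^*$"; moreover your per-step decrement $\eta=\alpha(\alpha\beta-\sup_k\|Cp_k\|)$ only stays of order $h$ if the strict margin $\alpha\beta_0>\rho$ holds at leading order, which is an extra assumption not contained in the hypothesis $\|Cp_k\|<\alpha\beta$. The paper closes this step without any asymptotic bookkeeping: since $\beta/h$ is an eigenvalue of $CB^{*}_s/h$, the same eigenvalue-perturbation estimate already used in Lemma~\ref{lem:pos_def} (Corollary~4.2.16 of Hinrichsen--Pritchard) gives $\beta/h\geq\gamma-\delta^*_{max}$ for every $h\leq h^*$, where $\gamma$ is the smallest eigenvalue of $CB_s$ and $\delta^*_{max}=\|\Delta\|_{2,2}$ at $h=h^*$; plugging this into the step count $k_0$ of Proposition~\ref{prop:ft} yields the explicit uniform bound $hk_0<\frac{V(\bar{\sigma}_0)}{\alpha^2(\gamma-\delta^*_{max})}+h^*\eqqcolon T^*$, with no shrinking of $h^*$ and no extra margin condition. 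So your plan identifies the right quantities but leaves the decisive uniform lower bound on $\beta/h$ unproven, which is exactly the piece the paper supplies.
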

\begin{proof}
 Let $V(\bar{\sigma}_k) \coloneqq -\bar{u}^{s T}_{k-1}\bar{\sigma}_k$, $\bar{u}^s_{k-1}\in -\alpha\Sgn(\bar{\sigma}_k)$.
 Assume that the system is initialized outside the discrete-time sliding phase.
 From Definition~\ref{de:dtSlidingPhase}, it follows that $\|\bar{u}^s_{k}\|\geq\alpha$.
 Starting from~\eqref{eq:lem:cv2}, doing as in the proof of Proposition~\ref{prop:ft}
 and adding the contribution of the perturbation,
 we have $V(\bar{\sigma}_{k+1}) - V(\bar{\sigma}_k) \leq -\beta\|\bar{u}^s_{k}\|^2-(\bar{u}^s_{k})^TCp_k$.
 Using the Cauchy-Schwarz inequality, we obtain $|(\bar{u}^s_{k})^TCp_k|\leq\|\bar{u}^s_{k}\|\|Cp_k\|$.
 To ensure that $V$ decreases strictly, we need $\|Cp_k\|<\beta\|\bar{u}^s_{k}\|$. This condition is satisfied using the
 hypothesis on the gain $\alpha$ and the fact that $\beta>0$.
 Note that even in the case with multiple switching surfaces, $V$ decreases as long
 as the system is not ``sliding'' on the intersection of all the manifolds.
 If $\widetilde{\sigma}_{k+1}=0$, then we enter the discrete-time sliding phase.
 The finite-time property is derived as in the proof of Proposition~\ref{prop:ft}. Let $\kappa = \alpha\beta-\|Cp_k\|$.
 From the assumption, $\kappa>0$ holds.
 In the reaching phase, $V$ decreases by at least $\kappa\alpha$ at each timestep.
 Hence, $V(\bar{\sigma}_k)$ converges to $0$ in finite-time.
 Now if the system is in the discrete-time sliding phase at $t_k$,
 then $\widetilde{\sigma}_{k+1}=0$ and $\bar{\sigma}_{k+1} = Cp_k$.
 At time $t_{k+1}$, we have $\widetilde{\sigma}_{k+2} = Cp_k + CB^{*}\bar{u}^s_{k+1}$.
 Let us show that $\bar{u}^s_{k+1} = -(CB^{*})^{-1}Cp_k$
 is the unique solution to the generalized equation~\eqref{lcp:inexact}. With this value, $\widetilde{\sigma}_{k+2} = 0$. 
 Using Lemma~\ref{lem:tech_matrix_estimation} and the hypothesis, $\|\bar{u}^s_{k+1}\|\leq\beta^{-1}\|Cp_k\|<\alpha$.
 Relations between norms yield $\|\bar{u}^s_{k+1}\|_\infty<\alpha$.
 Then $\bar{u}^s_{k+1}\in(\alpha,\alpha)^p\subset\alpha\Sgn(0)$ and $\bar{u}^s_{k+1}$ is a solution to~\eqref{lcp:inexact}.
 With the hypothesis of the proposition, $CB^{*}$ is also a $\mathbf{P}$-matrix. Then Lemma~\ref{lem:uniqueness}
 can be applied and yields the uniqueness property.
 Thus $\bar{u}^s_{k+1} = -(CB^{*})^{-1}Cp_k$ is the unique solution to \eqref{lcp:inexact} at time $t_{k+1}$,
 and by induction, the system stays in the discrete-time sliding phase.

 In the following, we suppose that $0<h<h^*$. Let $\delta_{max}^* = \|\Delta\|_{2,2}$ when $h=h^*$.
 From the expression of $k_0$ in the proof of Proposition~\ref{prop:ft},
 the duration of the reaching phase is $hk_0 < \frac{V(\bar{\sigma}_0)h}{\beta\alpha^2}+h$.
 Note that $\beta/h$ is an eigenvalue of $CB^*_s/h$. Applying again Corollary~4.2.16,
 p.~405 in~\cite{hinrichsen2005mathematical}, we have $\lambda-\beta/h\leq\delta_{max}\leq\delta_{max}^*$,
 with $\lambda$ an eigenvalue of $CB_s$.
 This yields $\gamma-\delta_{max}^*\leq\lambda-\delta_{max}^*\leq\beta/h$.
 Using this in the previous inequality, we get $hk_0 < \frac{V(\bar{\sigma}_0)}{\alpha^2(\gamma-\delta_{max}^*)}+h^*\eqqcolon T^*$.
\end{proof}
\begin{remark}
 In continuous time, the condition commonly found is $\alpha>\|\xi\|_{\infty,\R_+}$.
 If the perturbation $\xi$ is continuous, then it is possible to link this condition
 to the one used in the previous theorem, $\|Cp_k\|<\alpha\beta$ for all $k\in\N$.
 Using the mean value theorem for integration, we get $Cp_k = hCe^{A(t_{k+1}-t^\prime)}B\xi(t^\prime)
 = hCB\xi(t^\prime) + \mathcal{O}(h^2)$, with $t^\prime\in[t_k,t_{k+1}]$. Hence the first-order estimation for $\|Cp_k\|$ is
 $h\|CB\|_{2,2}\|\xi\|_2\leq\|CB\|_{2,2}\sqrt{p}\|\xi\|_{\infty}$. From the proof of Lemma~\ref{lem:pos_def},
 we have $\beta=h\lambda_{min}(CB) + \mathcal{O}(h^2)$.
 Then we get $\alpha>\|CB\|_{2,2}\sqrt{p}\|\xi\|_{\infty}/\beta + \mathcal{O}(h)$.
 Note that $\|CB\|_{2,2}>\lambda_{max}(CB)$ \cite[Corollary~3.1.5,~p.151]{horn1991topics}.
 Therefore $\|CB\|_{2,2}\sqrt{p}\|\xi\|_{\infty}/\beta\geq1$ and for $h$ small enough,
 $\|Cp_k\|<\alpha\beta$ implies $\alpha>\|\xi\|_{\infty,\R_+}$. If the sliding variable
 is a scalar, then the converse is also true at the limit.
\end{remark}
%
In the classical literature on discrete-time sliding mode, dealing with the explicit discretization~\eqref{eq:Use}
\cite{milosavljevic1985general,sarpturk1987stability,furuta1990sliding}, two conditions related to the sliding variable
emerged: $(\bar{\sigma}_{k+1}-\bar{\sigma}_{k})_i(\sigma_k)_i<0$ for all $i=1,\dots,n$, which is necessary;
and the second one is $|(\bar{\sigma}_{k+1})_i|<|(\bar{\sigma}_{k})_i|$. The conditions for linear systems,
stated in Lemma~\ref{lem:cv1}, \ref{lem:cv2}, and Proposition~\ref{prop:pert}
are directly on the system parameters and not on the evolution on the sliding variable,
which derives from the dynamics. This fact is, in our sense, much closer to the stability results obtained
in continuous-time \cite{utkin1992sliding}.

Let us now turn our attention to the relationship between $u$ and $\bar{u}$. In particular, we study the convergence
of $\bar{u}$ to $u$ during the discrete-time sliding phase, which is established after $T^*<+\infty$.
\begin{prop}\label{lem:cv_us_infty}
 Consider the discrete-time closed-loop system given by~\eqref{eq:dt_sys_pert} and~\eqref{lcp:inexact}.
 Let $\{h_n\}_{n\in\N}$ be any strictly decreasing sequence of positive numbers
 converging to $0$ and with $h_0<h^*$ (see Lemma~\ref{lem:pos_def}).
 Suppose that the perturbation $\xi\colon\R\to\R^p$ is uniformly continuous, that $CB$ is positive-definite
 and that $\alpha>0$ is chosen such that the conditions of Proposition~\ref{prop:pert} are satisfied for
 each timestep $h_n$.
 Then for any $S\subseteq[T^*, \infty)$, $\lim_{h_n\to 0} \lVert\bar{u}^s-u^s\rVert_{\infty,S} = 0$.
\end{prop}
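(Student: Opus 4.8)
The plan is to replace, on $[T^*,\infty)$, both $\bar u^s$ and $u^s$ by the closed-form expressions available there, and then to estimate their difference.

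On the continuous-time side, on $[T^*,\infty)$ the closed-loop system~\eqref{linSyst} is in its sliding phase, so $\sigma\equiv0$; since $\dot\sigma=CB(u^s+\xi)$ and $CB$ is invertible, this gives $u^s(t)=-\xi(t)$ for a.e.\ $t\geq T^*$ (the system being sliding there uses $\alpha>\lVert\xi\rVert_\infty$, which is a consequence of the hypotheses — see the Remark following Proposition~\ref{prop:pert} — together with the fact that the continuous-time reaching phase also ends before $T^*$). On the discrete-time side, the proof of Proposition~\ref{prop:pert} gives, for each timestep $h_n$, that the reaching phase has duration strictly less than $T^*$ and that, once in the discrete-time sliding phase, $\bar u^s_{k+1}=-(CB^*)^{-1}Cp_k$ with $p_k=\int_{t_k}^{t_{k+1}}e^{A(t_{k+1}-\tau)}B\xi(\tau)\,\mathrm{d}\tau$, for every subsequent index. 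Moreover the bound on the reaching-phase duration converges, as $h_n\to0$, to a limit strictly smaller than $T^*$ (visible from the expressions of $k_0$ and of $T^*$ in the proofs of Propositions~\ref{prop:ft} and~\ref{prop:pert}), so there is $N$ such that for all $n\geq N$ the whole of $[T^*,\infty)$ lies in the discrete-time sliding phase, with $\bar u^s(t)=-(CB^*)^{-1}Cp_k$ for every $t\in[t_{k+1},t_{k+2})\cap[T^*,\infty)$. Thus for $n\geq N$ it remains to bound $\bigl\lVert-(CB^*)^{-1}Cp_k+\xi(t)\bigr\rVert$ for $t\in[t_{k+1},t_{k+2})$, i.e.\ with $\lvert t-\tau\rvert\leq2h_n$ for all $\tau\in[t_k,t_{k+1}]$.

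Using $CB^*=C\Psi B=\int_{t_k}^{t_{k+1}}Ce^{A(t_{k+1}-\tau)}B\,\mathrm{d}\tau$ one has the identity
\begin{equation*}
 -(CB^*)^{-1}Cp_k+\xi(t)=(CB^*)^{-1}\int_{t_k}^{t_{k+1}}Ce^{A(t_{k+1}-\tau)}B\bigl(\xi(t)-\xi(\tau)\bigr)\,\mathrm{d}\tau .
\end{equation*}
I would bound the right-hand side in Euclidean norm by $\lVert(CB^*)^{-1}\rVert_{2,2}\cdot h_n\cdot M_1\cdot\omega(2h_n)$, where $M_1:=\lVert C\rVert_{2,2}\lVert B\rVert_{2,2}e^{\lVert A\rVert_{2,2}h_0}$ bounds $\lVert Ce^{A(t_{k+1}-\tau)}B\rVert_{2,2}$ uniformly in $n$, $k$ and $\tau$; $\omega$ is a modulus of uniform continuity of $\xi$, so $\omega(2h_n)\to0$; and, by Lemma~\ref{lem:tech_matrix_estimation}, $\lVert(CB^*)^{-1}\rVert_{2,2}\leq\beta_n^{-1}$ with $\beta_n$ the smallest eigenvalue of $CB^*_s$. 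The crucial point is that $h_n\beta_n^{-1}$ stays bounded along the sequence: exactly as in the proof of Lemma~\ref{lem:pos_def}, $\beta_n/h_n\geq\gamma-\delta_{\max}(h_n)$, where $\gamma>0$ is the smallest eigenvalue of $CB_s$ and $\delta_{\max}(h_n)=\lVert CB^*_s/h_n-CB_s\rVert_{2,2}\to0$, so $h_n\beta_n^{-1}\leq2/\gamma$ for $n$ large. Combining, $\lVert\bar u^s(t)-u^s(t)\rVert_\infty\leq\lVert\bar u^s(t)-u^s(t)\rVert\leq(2M_1/\gamma)\,\omega(2h_n)$ for every $t\in S$ and all large $n$; taking $\esssup_{t\in S}$ and letting $n\to\infty$ gives $\lVert\bar u^s-u^s\rVert_{\infty,S}\to0$.

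The main obstacle is the cancellation between the $\mathcal{O}(h_n^{-1})$ blow-up of $\lVert(CB^*)^{-1}\rVert_{2,2}$ and the $\mathcal{O}\bigl(h_n\,\omega(h_n)\bigr)$ smallness of $CB^*\xi(t)-Cp_k$: one must check that all constants entering the two estimates ($M_1$ and the bound $h_n\beta_n^{-1}\leq2/\gamma$) are uniform both in $n$ and in the index $k$. A secondary technical point is verifying that for $h_n$ small enough the entire half-line $[T^*,\infty)$ is contained in the discrete-time sliding phase — so that the closed-form expression for $\bar u^s$ genuinely applies on all of $S$ — and handling correctly the one-timestep delay by which $\bar u^s$ lags behind $-\xi$.
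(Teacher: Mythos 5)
Your proof is correct, and its skeleton matches the paper's: substitute the closed-form expressions valid in the sliding phase ($u^s(t)=-\xi(t)$ and $\bar u^s_{k+1}=-(CB^{*})^{-1}Cp_k$ from Proposition~\ref{prop:pert}), write the difference as $(CB^{*})^{-1}$ acting on a single integral of increments of $\xi$, and conclude by uniform continuity. Where you genuinely diverge is in how the two factors are estimated. The paper expands $(CB^{*})^{-1}=\bigl(I+\mathcal{O}(h_n)\bigr)(h_nCB)^{-1}$ via a Neumann/Taylor series (with the eigenvalues-in-the-unit-disk caveat), applies the mean value theorem for integration componentwise to the leading term, and exchanges a series with the integral for the remainder. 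You instead bound $\lVert(CB^{*})^{-1}\rVert_{2,2}\leq\beta_n^{-1}$ with Lemma~\ref{lem:tech_matrix_estimation} and control $h_n\beta_n^{-1}$ through the eigenvalue-perturbation estimate $\beta_n/h_n\geq\gamma-\delta_{\max}(h_n)$ already used in Lemma~\ref{lem:pos_def} and in the $T^{*}$ bound of Proposition~\ref{prop:pert}, then bound the integrand crudely by $M_1\,\omega(2h_n)$ with a modulus of uniform continuity; you also absorb the one-step delay into the window $\lvert t-\tau\rvert\leq 2h_n$ rather than carrying the shift $\xi(\tau-h_n)$ explicitly. This buys a shorter, arguably more rigorous argument with an explicit uniform-in-$k$ rate $(2M_1/\gamma)\,\omega(2h_n)$, at the price of a less sharp constant than the paper's expansion would give; the cancellation between the $\mathcal{O}(h_n^{-1})$ inverse and the $\mathcal{O}(h_n\omega(h_n))$ integral, which you correctly identify as the crux, is handled equivalently in both routes. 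The only soft spot you share with the paper is the assertion that the continuous-time system is sliding (so $u^s=-\xi$) on all of $[T^{*},\infty)$, which neither argument proves in detail; your extra remark on this and on $[T^{*},\infty)$ lying in the discrete-time sliding phase for all $n$ (in fact it holds for every $h_n<h^{*}$, not only for $n$ large) is harmless.
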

\begin{proof}
 Let $\{t_k\}$ be a sequence such that for all $k\in\N$, $t_{k+1}-t_k=h_n$. For the sake of clarity,
 we omit to write explicitly the dependence on $n$.
 From Proposition~\ref{prop:pert} and Lemma~\ref{lem:pos_def},
 we know that for all $k$ such that $t_k\geq T^*$, $\bar{u}^s_k=-(CB^{*})^{-1}Cp_{k-1} =
 -(CB^*)^{-1}C\int_{t_{k-1}}^{t_{k}}\!e^{A(t_{k}-\tau)}B\xi(\tau)\mathrm{d}\tau$. During the sliding phase,
 the continuous-time controller satisfies $u^s(t) = - \xi(t)$. Let $S$ be any time interval contained in $[T^*, +\infty)$.
 Let $t\in S$ and $k\in\N$ is such that $t\in[t_k, t_{k+1})$. Hence $\bar{u}^s(t) = \bar{u}^s_k$.
  Let us study $\bar{u}_k^s-u^s(t)$:
  \begin{align}
   \bar{u}_k^s-u^s(t) &= -(CB^*)^{-1}C\int_{t_{k-1}}^{t_{k}}\!e^{A(t_{k}-\tau)}B\xi(\tau)\mathrm{d}\tau +\xi(t)\\
   &= -(CB^*)^{-1}\left( \int_{t_{k-1}}^{t_{k}}\!Ce^{A(t_{k}-\tau)}B\xi(\tau)\mathrm{d}\tau - CB^*\xi(t) \right).
  \end{align}
   Using~\eqref{eq:Bstar}, we obtain:
   \begin{align}
   \bar{u}_k^s-u^s(t) &= -(CB^*)^{-1}\left( \int_{t_{k-1}}^{t_{k}}\!Ce^{A(t_{k}-\tau)}B\xi(\tau)\mathrm{d}\tau - \int_{t_{k}}^{t_{k+1}}\!Ce^{A(t_{k+1}-\tau)}B\xi(t)\mathrm{d}\tau \right).\label{eq:diff_int}\\
   \intertext{With the change of variable $\tau^\prime=\tau+h_n$ in the first integral,
    we can group the two integrals in \eqref{eq:diff_int} as follows:}
   \bar{u}_k^s-u^s(t) &= - (CB^*)^{-1}\left( \int_{t_{k}}^{t_{k+1}}\!Ce^{A(t_{k+1}-\tau)}B(\xi(\tau-h_n)-\xi(t)\mathrm{d}\tau\right)\\
   &= -(CB^*)^{-1}\left( \int_{t_{k}}^{t_{k+1}}\! CB(\xi(\tau-h_n)-\xi(t))
   + \sum_{l=1}^{\infty}\frac{CA^lB}{l!}\left((t_{k+1}-\tau)^l(\xi(\tau-h_n)-\xi(t))\right)\mathrm{d}\tau \right).\label{eq:cv_1}
  \end{align}
 Using again~\eqref{eq:Bstar}, let us provide an approximation for the first factor:
 \begin{align}
  (CB^*)^{-1} &= \left( h_nCB + \sum_{l=1}^\infty \frac{CA^lB}{(l+1)!}h_n^{l+1} \right)^{-1}\\
  &= \left(I+\sum_{l=1}^{\infty}\frac{(CB)^{-1}CA^lB}{(l+1)!}h_n^{l}\right)^{-1}(h_nCB)^{-1}\\
  &= \left( I - \frac{(CB)^{-1}CAB}{2}h_n + \mathcal{O}(h_n^2) \right)(h_nCB)^{-1}.\label{eq:dev_taylor_CB}
 \end{align}
 The Taylor expansion holds if $\sum_{l=1}^{\infty}\frac{(CB)^{-1}CA^lB}{(l+1)!}h_n^{l}$ has all its eigenvalues in the unit
 disk. This is a mere technical restriction, since it is always possible to find a small enough positive number
 $h_{n_0}$ such this condition is satisfied. Since we are interested in the case where $\{h_n\}$ converges to
 $0$, this requirement is supposed to hold.
 For the first term in the integrand in \eqref{eq:cv_1}, we apply the mean value theorem for integration.
 If $\xi$ is a vector-valued function (this is the case when the sliding variable has dimension greater than $1$),
 we apply the theorem for each component separately. This yields for $i=1,\dots,n\quad
  (\int_{t_{k}}^{t_{k+1}}\! \xi(\tau-h_n)-\xi(t)\mathrm{d}\tau)_i = h_n(\xi_i(t^\prime_i-h_n)-\xi_i(t))$ for some
  $t^\prime_i\in[t_k,t_{k+1}]$.
  For the second part of the integrand in~\eqref{eq:cv_1}, we exchange the summation and integral signs. This is
  possible since the matrix exponential converges normally and $\xi$ is bounded on any interval $[t_k, t_{k+1}]$
  ($\xi$ is continuous).
  Moreover for all $l\geq1$, with $\tau\in[t_k, t_{k+1}]$, $(t_{k+1}-\tau)^l\xi(\tau-h_n)-\xi(t)) = \mathcal{O}(h_n)$.
  Thus $\int_{t_{k}}^{t_{k+1}}\! (t_{k+1}-\tau)^l(\xi(\tau-h_n)-\xi(t))\mathrm{d}\tau = \mathcal{O}(h_n^2)$.
  Then~\eqref{eq:cv_1} can be rewritten as:
  \begin{align}
   \bar{u}_k^s-u^s(t) &= -(I+\mathcal{O}(h_n))\left(\int_{t_{k}}^{t_{k+1}}\!
   h_n^{-1}(\xi(\tau-h_n)-\xi(t))\mathrm{d}\tau + \mathcal{O}(h_n)\right).
   \intertext{Taking the supremum norm yields:}
  %
   \|\bar{u}_k^s-u^s(t)\|_\infty&\leq\|I + \mathcal{O}(h_n))\|_\infty
   \left( \max_i\sup_{t\in[t_k,t_{k+1}]}|\xi_i(t^\prime_i-h)-\xi_i(t)| + \|\mathcal{O}(h_n)\|_\infty\right)\\
   &\leq \max_i\sup_{t\in[t_k,t_{k+1}]}|\xi_i(t^\prime_i-h_n)-\xi_i(t)| + \mathcal{O}(h_n).\label{eq:prop_cv_last}
  \end{align}
  Since $\xi$ is uniformly continuous, for every $\varepsilon>0$, there exists $\delta>0$ such that for all $t_1, t_2\in\R$,
  $|t_1 - t_2|\leq\delta$ implies $\|\xi(t_1)-\xi(t_2)\|\leq\varepsilon$. Then the right-hand side of \eqref{eq:prop_cv_last}
  is converging to $0$ as $h_n\to0$. Since this is true for all $t\in S$, the proof is complete.
\end{proof}
%
It is also interesting to study the convergence of the variation of the control variable,
which may be thought of as a measure of the control input chattering. Let us first define the variation
of a function in some special cases. The material in Definitions~\ref{de:VarVecStep} and \ref{de:VarVecC1}
is adapted from \cite{ambrosio2000functions}.
\begin{de}\label{de:VarVecStep}
 Let $f\colon\R\to\R^m$ be a right-continuous step function, discontinuous at finitely many time instants $t_k$
 and $t_0, T\in\R$, $t_0<T$. Then the variation of $f$ on $[t_0, T]$ is defined as:
 \begin{equation}
  \mathrm{Var}_{t_0}^T(f) \coloneqq \sum_{k}\|f(t_{k})-f(t_{k-1})\|,\label{eq:defVarStep}
 \end{equation}
 with $k\in\N$ such that $t_k\in(t_0,T]$.
\end{de}
\begin{de}\label{de:VarVecC1}
 Let $f\colon\R\to\R^m$ be a continuously differentiable function, with bounded derivatives and let $t_0, T\in\R$, $t_0<T$.
 Then the variation of $f$ on $[t_0, T]$ is defined as:
 \begin{equation}
  \mathrm{Var}_{t_0}^T(f) \coloneqq \int_{t_0}^{T}\!\|\dot{f}(\tau)\|\mathrm{d}\tau.
  \label{eq:defVarC1}
 \end{equation}
\end{de}
\begin{prop}
 Let $\{h_n\}_{n\in\N}$ be any strictly decreasing sequence of positive numbers converging to $0$ with $h_0<h^*$.
 Suppose that $CB$ is positive-definite, and $\xi$ is a real-valued continuously differentiable with bounded derivative function.
 Let $\alpha$ be chosen such that the conditions of Proposition~\ref{prop:pert} are verified for each $h_n$.
 Then $\displaystyle\lim_{h_n\to 0}\mathrm{Var}_{T^*}^T(\bar{u}^s) = \mathrm{Var}_{T^*}^T(u^s)$.
\end{prop}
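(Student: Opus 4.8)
The plan is to exploit the explicit form that $\bar{u}^s_k$ takes once the closed-loop system is in the discrete-time sliding phase, and then recognize the sum defining $\mathrm{Var}_{T^*}^T(\bar{u}^s)$ as a Riemann sum for $\int_{T^*}^T|\dot\xi|$. First I would set up both sides of the claimed identity. By hypothesis $\alpha$ meets the assumptions of Proposition~\ref{prop:pert} for every $h_n$ and $h_0<h^*$ (Lemma~\ref{lem:pos_def}), so the reaching phase lasts at most $T^*$ and, for every $t_k\ge T^*$, one has $\bar{u}^s_k=-(CB^{*})^{-1}Cp_{k-1}$ with $p_{k-1}=\int_{t_{k-1}}^{t_k}e^{A(t_k-\tau)}B\xi(\tau)\,\mathrm{d}\tau$. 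Hence $\bar{u}^s$ restricted to $[T^*,T]$ is a right-continuous step function, discontinuous only at the grid points, and Definition~\ref{de:VarVecStep} gives $\mathrm{Var}_{T^*}^T(\bar{u}^s)=\sum_{t_k\in(T^*,T]}\|\bar{u}^s_k-\bar{u}^s_{k-1}\|$. On the continuous-time side, during the sliding phase $u^s(t)=-\xi(t)$ (as noted in the proof of Proposition~\ref{lem:cv_us_infty}), which is $C^1$ with bounded derivative, so Definition~\ref{de:VarVecC1} yields $\mathrm{Var}_{T^*}^T(u^s)=\int_{T^*}^T|\dot\xi(\tau)|\,\mathrm{d}\tau$.

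Next I would rewrite $\bar{u}^s_k$ as a weighted local average of $-\xi$. Put $b(s)\coloneqq Ce^{As}B$ for $s\ge0$; since $\xi$ is real-valued this is a scalar with $b(0)=CB>0$, and the substitution $s=t_k-\tau$ in~\eqref{eq:Bstar} and in $Cp_{k-1}$ gives $CB^{*}=\int_0^{h_n}b(s)\,\mathrm{d}s$ and $\bar{u}^s_k=-\big(\int_0^{h_n}b(s)\,\mathrm{d}s\big)^{-1}\int_0^{h_n}b(s)\xi(t_k-s)\,\mathrm{d}s$. Subtracting the same expression at index $k-1$ and using $t_{k-1}-s=t_k-s-h_n$,
\[
\bar{u}^s_k-\bar{u}^s_{k-1}=-\frac{\int_0^{h_n}b(s)\big(\xi(t_k-s)-\xi(t_k-s-h_n)\big)\,\mathrm{d}s}{\int_0^{h_n}b(s)\,\mathrm{d}s}.
\]
For $n$ large enough that $b>0$ on $[0,h_n]$, the weights $b(s)/\!\int_0^{h_n}b$ form a probability density on $[0,h_n]$; and by the mean value theorem $\xi(t_k-s)-\xi(t_k-s-h_n)=h_n\dot\xi(\theta_{k,s})$ for some $\theta_{k,s}$ with $|\theta_{k,s}-t_{k-1}|<h_n$. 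Therefore $\bar{u}^s_k-\bar{u}^s_{k-1}$ is $-h_n$ times a convex combination of values of $\dot\xi$ on $[t_{k-1}-h_n,t_{k-1}+h_n]$, so $\bar{u}^s_k-\bar{u}^s_{k-1}=-h_n\dot\xi(t_{k-1})+h_n r_{k,n}$ with $|r_{k,n}|\le\omega(h_n)$, where $\omega$ is the modulus of continuity of $\dot\xi$ on the compact interval $[T^*-h_0,T]$ (finite, since $\xi\in C^1$).

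Then I would sum the absolute values. Using $\big||a|-|a+b|\big|\le|b|$ termwise,
\[
\sum_{t_k\in(T^*,T]}\|\bar{u}^s_k-\bar{u}^s_{k-1}\|=\sum_{t_k\in(T^*,T]}h_n\,|\dot\xi(t_{k-1})|+R_n,\qquad |R_n|\le(T-T^*+h_0)\,\omega(h_n).
\]
The first term is a left-endpoint Riemann sum for the continuous (hence Riemann integrable) function $|\dot\xi|$ over $[T^*,T]$, up to an $\mathcal{O}(h_n)$ correction because $T^*$ and $T$ need not be grid points; it therefore converges to $\int_{T^*}^T|\dot\xi(\tau)|\,\mathrm{d}\tau$ as $h_n\to0$. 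Since $\dot\xi$ is uniformly continuous on the compact interval, $\omega(h_n)\to0$, so $R_n\to0$. Combining the three steps gives $\lim_{h_n\to0}\mathrm{Var}_{T^*}^T(\bar{u}^s)=\int_{T^*}^T|\dot\xi|=\mathrm{Var}_{T^*}^T(u^s)$.

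The routine part is the $\mathcal{O}(\cdot)$ bookkeeping; the one genuinely delicate point is making the passage from the discrete increments to $\int|\dot\xi|$ an \emph{equality} in the limit rather than merely an inequality. A naive estimate would bound $\|\bar{u}^s_k-\bar{u}^s_{k-1}\|$ by something like $\big|\int_{t_{k-1}}^{t_k}\dot\xi\big|$ and only deliver $\mathrm{Var}_{T^*}^T(\bar{u}^s)\le\mathrm{Var}_{T^*}^T(u^s)$ pre-limit; the weighted-average representation is what converts each increment into (essentially) a pointwise value $-h_n\dot\xi(t_{k-1})$, so that the sum is a genuine Riemann sum and the limit is exact. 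One must also track that the number of grid points in $(T^*,T]$ is of order $1/h_n$, so that the accumulated error $R_n$ is of order $\omega(h_n)$ and still tends to zero, rather than being only bounded.
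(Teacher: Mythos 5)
Your proof is correct, and while it starts exactly where the paper does (the closed form $\bar{u}^s_k=-(CB^{*})^{-1}Cp_{k-1}$ in the discrete-time sliding phase, and the pairing of consecutive integrals via the shift $\tau\mapsto\tau-h_n$), the limit step is handled by a genuinely different and in fact sharper argument. The paper Taylor-expands $(CB^{*})^{-1}$ as a Neumann series (requiring a smallness condition on $h_n$) and then \emph{bounds} $\mathrm{Var}_{T^*}^T(\bar{u}^s)$ above by $\bigl(1+\mathcal{O}(h_n)\bigr)\int\bigl\|\tfrac{\xi(\tau)-\xi(\tau-h_n)}{h_n}\bigr\|\mathrm{d}\tau$ plus vanishing terms, and concludes by letting the difference-quotient integral converge to $\int\|\dot\xi\|=\mathrm{Var}_{T^*}^T(u^s)$; as written this chain only delivers the upper half of the limit, $\limsup_n\mathrm{Var}_{T^*}^T(\bar{u}^s)\le\mathrm{Var}_{T^*}^T(u^s)$, with the matching lower bound left implicit. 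You instead exploit the scalar hypothesis ($\xi$ real-valued, so $b(s)=Ce^{As}B$ is a positive scalar for small $h_n$) to write each increment as $-h_n$ times a $b$-weighted average of $\dot\xi$ over a window of radius $h_n$, which pins it to $-h_n\dot\xi(t_{k-1})$ up to $h_n\,\omega(h_n)$ and turns the variation into a Riemann sum; this yields a two-sided estimate and hence the exact limit, without any Neumann expansion. The only blemishes are cosmetic: $\theta_{k,s}$ can reach down to $t_{k-1}-h_n>T^*-2h_0$, so the compact interval carrying the modulus of continuity should be $[T^*-2h_0,T]$ rather than $[T^*-h_0,T]$; measurability of $s\mapsto\dot\xi(\theta_{k,s})$ is cleaner if you replace the mean value theorem by $\xi(t_k-s)-\xi(t_k-s-h_n)=\int_{t_k-s-h_n}^{t_k-s}\dot\xi(\tau)\,\mathrm{d}\tau$ and bound that integral by the same window estimate; and the first increment after $T^*$ may involve a $\bar{u}^s_{k-1}$ with $t_{k-1}<T^*$ for which the closed form is not asserted --- a single boundary term the paper glosses over in the same way, and which in any case can be absorbed since it is one term among $\mathcal{O}(1/h_n)$.
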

\begin{proof}
 Let $\{t_k\}$ be a sequence such that for all $k\in\N$, $t_{k+1}-t_k=h_n$.
 Let us recall that, with the implicit controller defined in Equations~\eqref{eq:dt_sys_pert}
 and~\eqref{lcp:inexact}, the reaching phase duration is bounded and
 that the sliding phase is established at $t=T^*$ if $h$ is small enough.
 Let $\mathrm{Var}_{T^*}^T(\bar{u}^s)$ denote the variation of the function $\bar{u}^s$ on the time interval $[T^*,T]$.
 The variation of the step function $\bar{u}^s$ is:
 \begin{gather}
  \hspace{-0.5em}\mathrm{Var}_{t_0}^T(\bar{u}^s) = \sum_{k}\|\bar{u}^s_{k+1}-\bar{u}^s_{k}\|
  = \sum_{k}\|(CB^*)^{-1}C\int_{t_{k}}^{t_{k+1}}\!e^{A(t_{k+1}-\tau)}B\xi(\tau)\mathrm{d}\tau-(CB^*)^{-1}C\int_{t_{k-1}}^{t_{k}}\!e^{A(t_{k}-\tau)}B\xi(\tau)\mathrm{d}\tau\|\\
  = \sum_{k}\|(CB^{*})^{-1}\left(\int_{t_{k}}^{t_{k+1}}\!CB(\xi(\tau)-\xi(\tau-h_n))\mathrm{d}\tau + \int_{t_{k}}^{t_{k+1}}\!C(e^{A(t_{k+1}-\tau)}-I)B(\xi(\tau) - \xi(\tau-h_n)) \right)\|.
 \end{gather}
  Using the approximation for $(CB^{*})^{-1}$ obtained in \eqref{eq:dev_taylor_CB}, we get:
  \begin{align}
  \mathrm{Var}_{t_0}^T(\bar{u}^s) &\leq 
   (I+\mathcal{O}(h_n))\sum_{k}\|\int_{t_{k}}^{t_{k+1}}\!\frac{\xi(\tau)-\xi(\tau-h_n)}{h_n}\mathrm{d}\tau\| \\
   &+ (I+\mathcal{O}(h_n))\sum_{k}\| (CB)^{-1}\int_{t_{k}}^{t_{k+1}}\!C(e^{A(t_{k+1}-\tau)}-I)B\frac{\xi(\tau) - \xi(\tau-h_n)}{h_n}\mathrm{d}\tau\| \\
  &\leq 
   (I+\mathcal{O}(h_n))\int_{t_0}^{T}\!\|\frac{\xi(\tau)-\xi(\tau-h_n)}{h_n}\|\mathrm{d}\tau \\
   &+ (I+\mathcal{O}(h_n))\sum_{k}\| (CB)^{-1}\int_{t_{k}}^{t_{k+1}}\!C(e^{A(t_{k+1}-\tau)}-I)B\frac{\xi(\tau) - \xi(\tau-h_n)}{h_n}\mathrm{d}\tau\|.\label{eq:var_est_tmp}
 \end{align}
 Using standard estimation inequalities, we obtain the following estimation for all integrals in the sum:
 \begin{gather}
  \|\int_{t_{k}}^{t_{k+1}}\!C(e^{A(t_{k+1}-\tau)}-I)B\frac{\xi(\tau) - \xi(\tau-h_n)}{h_n}\mathrm{d}\tau\|\leq
  \int_{t_{k}}^{t_{k+1}}\!\|C\|\|B\|(e^{\|A\|h_n}-1)\|\frac{\xi(\tau) - \xi(\tau-h_n)}{h_n}\|\mathrm{d}\tau.
 \end{gather}
 Then we can group the terms in \eqref{eq:var_est_tmp} as:
 \begin{multline}
  \mathrm{Var}_{t_0}^T(\bar{u}^s) \leq (I+\mathcal{O}(h_n))\int_{t_0}^{T}\!\|\frac{\xi(\tau)-\xi(\tau-h_n)}{h_n}\|\mathrm{d}\tau \\
  + (I+\mathcal{O}(h_n))\|(CB)^{-1}\|\int_{t_0}^{T}\!\|C\|\|B\|(e^{\|A\|h_n}-1)\|\frac{\xi(\tau) - \xi(\tau-h_n)}{h_n}\|\mathrm{d}\tau.\label{eq:var_est}
 \end{multline}
 Since $\xi$ is continuously differentiable, $\int_{t_0}^{T}\!\|\frac{\xi(\tau)-\xi(\tau-h_n)}{h_n}\|\mathrm{d}\tau$ converges
 to $\int_{t_0}^{T}\!\|\dot{\xi}(\tau)\|\mathrm{d}\tau=\mathrm{Var}_{t_0}^T({u}^s)$ as $h_n\to 0$.
 All the other terms in the right-hand side of \eqref{eq:var_est} converge to $0$ as $h_n\to 0$.
 %
\end{proof}

\subsection{Comparison with saturated SMC}

\begin{figure}[htb]
  \ffigbox{}{\CommonHeightRow{
  \begin{subfloatrow}[2]
   \ffigbox[\FBwidth-4.5pt]{\includegraphics{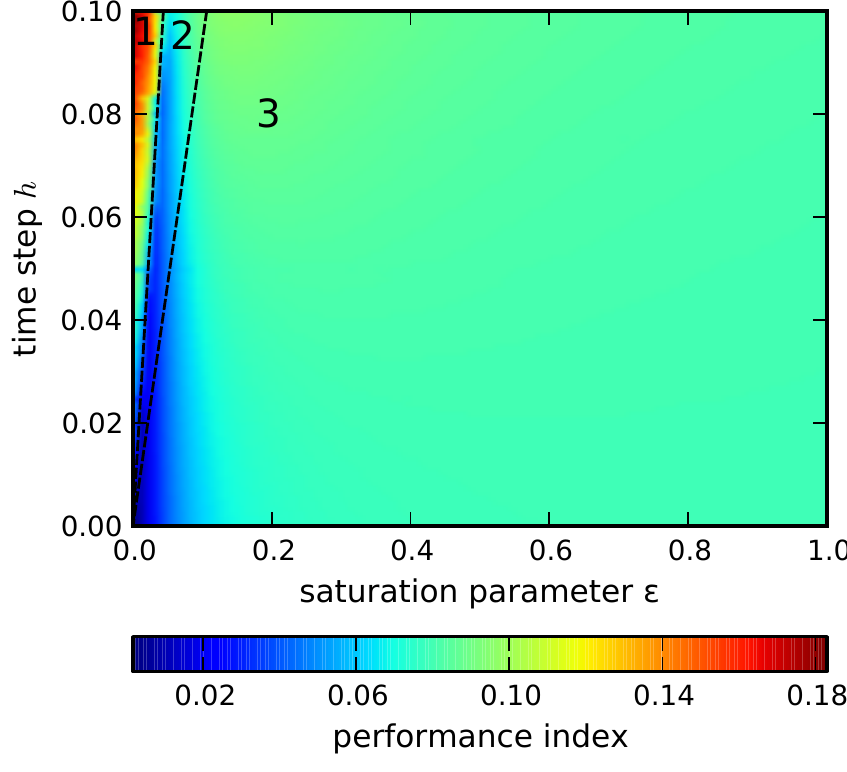}}
  {\caption{Simulation results with $100$ regularly spaced values for the timestep $h$ and $100$ logarithmically spaced
   values for the saturation parameter $\varepsilon$.}
   \label{fig:perf-sat-chattering}}
   \ffigbox[\FBwidth-4.5pt]{\includegraphics{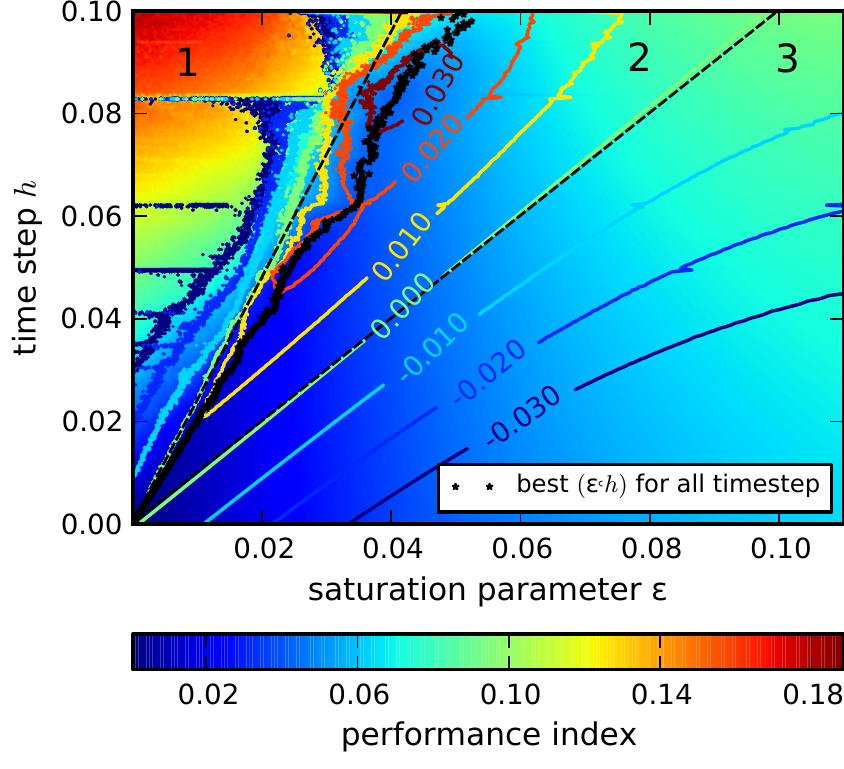}}
   {\caption{Detail of Fig.~\ref{fig:perf-sat-chattering}, $300$ values for $h$ and $1000$
   values for $\varepsilon$, forming a regular grid.
  Level sets were also added to show the difference in performance between the implicit discretization and
 the explicit one with saturation. If the difference is positive, the explicit saturated control is performing better
than the implicit one.}
\label{fig:pert-sat-chattering_z}}
\end{subfloatrow}}
 \caption{Simulation results of a perturbed system controlled using sliding mode with a saturation.
    The performance index is the sum of the $|\sigma_k|$ for the last $20\second$.}
    \label{fig:sat-chattering}}
\end{figure}
\begin{figure}[htb]
  \ffigbox{}{\CommonHeightRow{
  \begin{subfloatrow}[2]
   \ffigbox[\FBwidth-4.5pt]{\includegraphics{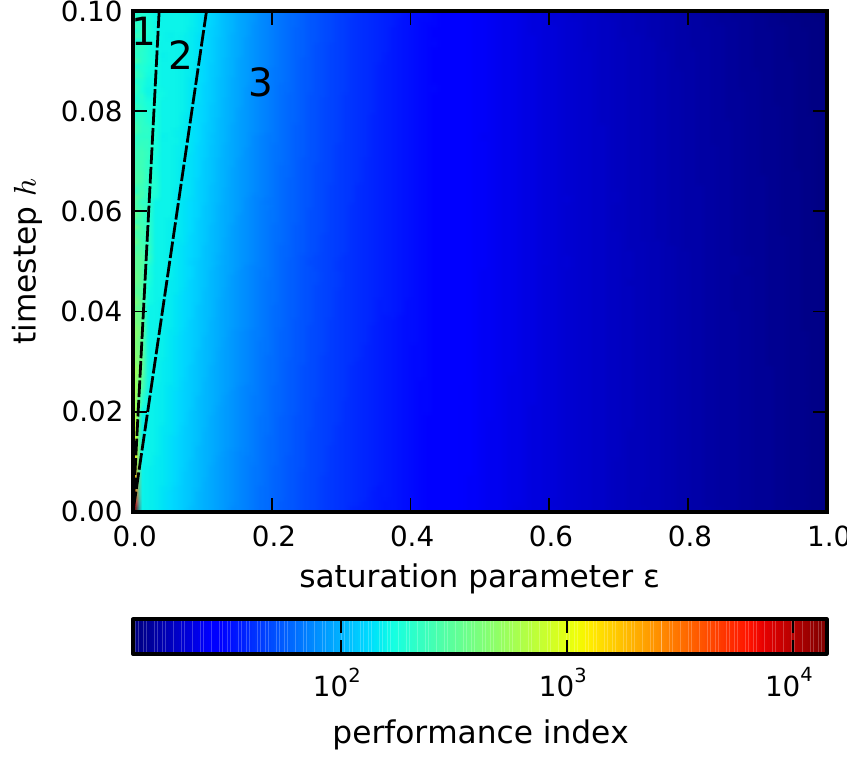}}
   {\caption{Simulation results with $100$ regularly spaced values for the timestep $h$ and $100$ logarithmically spaced
   values for the saturation parameter $\varepsilon$.}
   \label{fig:perf-sat-cost}}
   \ffigbox[\FBwidth-4.5pt]{\includegraphics{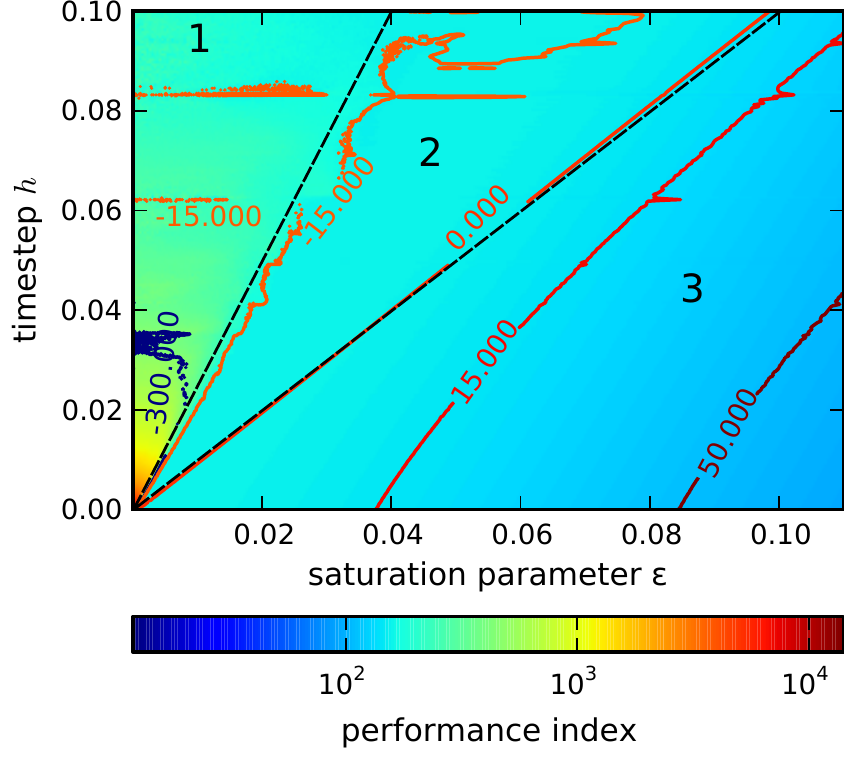}}
   {\caption{Detail of Fig.~\ref{fig:perf-sat-cost},
  $300$ values for $h$ and $1000$ values for $\varepsilon$, forming a regular grid.
  Level sets were also added to show the difference in performance between the implicit discretization and
 the explicit one with saturation. If the difference is positive, the explicit saturated control is performing better
than the implicit one.}
\label{fig:pert-sat-cost_z}}
\end{subfloatrow}}
 \caption{Simulation results of the same perturbed system controlled using sliding mode with a saturation.
  The performance index is the sum of the $|\bar{u}^s_{k+1}-\bar{u}^s_{k}|$ for the last $20\second$.}
  \label{fig:sat-cost}}
\end{figure}
We simulate the system~\eqref{sys:2Dunstable} with a perturbation $\xi(t) = \sin 4\pi t$. Instead of using a discontinuous
control, we use the following input: $u^s(t) = -\sate(\sigma(t))$ with $\sate(x) = \begin{cases}
 x/\varepsilon &\text{if } |x| \leq \varepsilon\\
 \sgn(x) &\text{if } |x| > \varepsilon
\end{cases}$. As in Section~\ref{sec:sim}, each simulation lasts $150\second$. We use two metrics to measure the
performance of the different controllers.
To measure the (output) chattering, due to the discretization and the perturbation, we
sum the absolute value of the sliding variable $\bar{\sigma}_k$ for
the last $20\second$: $C_1\coloneqq\sum_k|\bar{\sigma}_k|$.
To measure the control effort (or input chattering), we measure the
variation of the control for the last $20\second$: $C_2\coloneqq\mathrm{Var}_{T-20}^T(\bar{u}^s)$. Each quantity defines the
performance index in Fig.~\ref{fig:sat-chattering} or~\ref{fig:sat-cost}. We choose to consider only the last
$20\second$ of each simulation to capture the behaviour near the sliding manifold. Let us recall that without perturbation, the
implicit controller always supersedes the saturated explicit one, since it suppresses numerical chattering and $\bar{u}^s_k = 0$
in the discrete-time sliding phase. With both index, we can divide the space into 3 cones, numbered $1$, $2$ and $3$ in
Fig.~\ref{fig:sat-chattering} and~\ref{fig:sat-cost}.
This separation helps us to compare both controllers. In Fig.~\ref{fig:perf-sat-chattering}
the performance in term of chattering is presented. For large values of $\varepsilon$, the chattering does not change
when the timestep varies: the control action does not attenuate the effect of the perturbation.
With a small $\varepsilon$, the behaviour is richer, as depicted in Fig.\ref{fig:pert-sat-chattering_z}.
On Fig.~\ref{fig:pert-sat-chattering_z}, the overall best performance is obtained with small values for both
$\varepsilon$ and $h$. However for small values of $\varepsilon$, the performance can degrade rapidly if the
sampling period $h$ is not small enough, as seen in region $1$.
The dark points indicate for each value of $h$ the pair $(\varepsilon, h)$ of parameters yielding the best performance.
It seems that there is a linear relationship between those values. However it is unclear if this observation on
one particular system remains valid with a different perturbation.
The level sets in Fig.~\ref{fig:pert-sat-chattering_z} are used to compare the performance of
the implicit and the saturated explicit controllers. On Fig.~\ref{fig:sat-cost}, the performance in terms of control cost
is presented. The best performance is achieved for large $\varepsilon$ since the slope of the saturated function is gentle.
On the other hand in Fig.~\ref{fig:pert-sat-cost_z}, with a small $\varepsilon$, the cost increases
and explodes with $\varepsilon$ close to $0$, as in region $1$.
The level sets indicate the difference between the costs of the 2 different controllers.
It is worth noting that on region $2$ where the saturated controller is better in Fig.~\ref{fig:pert-sat-chattering_z},
it has a higher cost in term of control (Fig.~\ref{fig:pert-sat-cost_z}).
In region $3$, where the saturated controller performs less in terms of chattering (Fig.~\ref{fig:perf-sat-chattering}),
it has a smaller cost in terms of control (Fig.~\ref{fig:perf-sat-cost}).
Indeed with a large $\varepsilon$, the control input is small when the closed-loop system is close to the sliding manifold.
The cost is then very small, but the disturbance is not attenuated at all. The implicit controller appeals to us as the best
compromise between the input and output chattering. It is also very easy to use, since it requires no particular tuning
with respect to the timestep or the perturbation.

\section{Conclusion}\label{sec:conclusions}

In this article several time discretizations of the classical ECB-SMC method are analysed,
from the point of view of their ability to alleviate or suppress the numerical chattering,
and to guarantee the finite-time reachability of the sliding surface.
A new discrete-time sliding mode control scheme is also proposed.
The analysis is led from analytical estimations, as well as numerical simulations
obtained with the INRIA software package {\sc siconos}.
In particular the influence of the discretization method of the state-continuous equivalent controller is studied,
as well as the one of the discontinuous part of the input (explicit versus implicit discretizations).
The nominal and perturbed cases are considered. The simulation results indicate that the use of an explicit discretization
for the discontinuous part of the input yields numerical chattering. This is not the case when using an implicit
discretization. We also provide an example where the use of an explicit discretization of
$u^{eq}$ makes the closed-loop system diverge, whereas with the other methods it attains the sliding surface.
The issues related to the Lyapunov stability of the discrete-time sliding variable are also studied,
using the monotonicity properties of the underlying discontinuous (set-valued) controller.
Further works will include conducting experimental studies and also improvements in the perturbation attenuation.

\section*{Bibliography}
\bibliographystyle{IEEEtran}
\bibliography{ref}
\end{document}